\def\argmin{\mbox{argmin}}
\def\a{\alpha}
\def\b{\beta}
\def\R{\mathbb R}
\def\dd{\Delta}
\def\d{\delta}
\def\F{{\mathbb F}}
\def\G{{\mathbb G}}
\def\IK{{\mathbb K}}
\def\L{{\cal L}}
\def\P{{\mathbb P}}
\def\l{\lambda}
\def\labda1{\lambda_1}
\def\labda2{\lambda_2}
\def\e{\varepsilon}
\def\t{\tau}
\def\s{\sigma}
\def\argmin{\mbox{argmin}}
\def\comment#1{\relax}
\def\=in{\mathop{\rm =}}
\newtheorem{theorem}{Theorem}[section]
\newtheorem{lemma}{Lemma}[section]
\newtheorem{remark}{Remark}[section]
\numberwithin{equation}{section}
\theoremstyle{plain}
\def\th{\theta}
\def\P{{\mathbb P}}
\def\F{{\mathbb F}}
\def\G{{\mathbb G}}
\def\L{\Lambda}
\begin{document}
\begin{frontmatter}
\title{Confidence intervals for the current status model}
\runtitle{Intervals for the current status model}
\begin{aug}
\author{\fnms{Piet} \snm{Groeneboom}\corref{}\ead[label=e1]{P.Groeneboom@tudelft.nl}
\ead[label=u1,url]{http://dutiosc.twi.tudelft.nl/\textasciitilde pietg/}}
and\ \
\author{\fnms{Kim} \snm{Hendrickx}\ead[label=e2]{kim.hendrickx@uhasselt.be}
\ead[label=u2,url]{http://www.uhasselt.be/fiche_en?voornaam=Kim&naam=HENDRICKX}}
\runauthor{P.\ Groeneboom and K.\ Hendrickx}
\affiliation{Delft University of Technology and Hasselt University}
\end{aug}

\begin{abstract}
	We discuss a new way of constructing pointwise confidence intervals for the distribution function in the current status model. The confidence intervals are based on the smoothed maximum likelihood estimator (SMLE) and constructed using bootstrap methods.
	Other methods to construct confidence intervals, using the non-standard limit distribution of the (restricted) MLE, are compared to our approach via simulations and real data applications.
\end{abstract}

\begin{keyword}
	\kwd{MLE}
	\kwd{SMLE}
	\kwd{current status}
	\kwd{confidence intervals}
	\kwd{bootstrap}
\end{keyword}

\end{frontmatter}

\section{Introduction}  
Survival models are commonly used to characterize the distribution of a variable $X$ that is not observed directly.
Depending on what information is obtained on $X$, different censoring schemes arise. In this paper we consider the situation that a variable of interest is only known to lie before or after some random censoring variable $T$. Each observed sample consists of a set of $n$ inspection times $T_i$ (independent of the other $T_j$ and all $X_j's, j = 1,\ldots,n$)  and $n$ censoring indicators $\dd_i = 1_{\{X_i \leq T_i\}}$.
This type of censored data is known as current status data and arises naturally in reliability and survival studies when the status of an observational unit is only checked at one measurement point, which happens in especially when testing is destructive. One could say that the $i$th observation indicates the \textit{current status} of component $i$ at time $T_i$.
Estimation of the distribution function of the response variable in the current status model is harder than in right-censored models due to the lack of observing an actual event of interest.
\cite{GrWe:92} show that the (non-parametric) maximum likelihood estimator $\hat F_n$ (MLE), maximizing the likelihood of the data given by,
\begin{align} 
\label{likelihood}
\ell_n(F) = \sum_{i=1}^n \dd_i \log F(T_i)+(1-\dd_i)\log\{1-F(T_i)\},
\end{align}
over all possible distribution functions $F$ without making any additional constraints, converges pointwise at cube-root $n$ rate to the true distribution function $F_0$ of $X$. 
The Kaplan-Meier estimator (\citealp{Kaplan1958}), which is the MLE for right-censored data, converges on the contrary at a faster square root $n$ rate because of the fact that one has actual observations in addition to the censored ones. In the current status model all observations are censored.

In this paper we introduce new methods for constructing pointwise confidence intervals (CIs) for $F_0$ at time $t$ and compare our techniques with existing methods for interval estimation in current status models. We assume that both $X$ and $T$ have continuously differentiable distribution functions $F_0$ and $G$ respectively with positive derivatives $f_0$ and $g$ at $t$. From \cite{GrWe:92}, it is known that,
\begin{align}
\label{asumpt_distr_MLE}
n^{1/3} \left\{\hat F_n(t) - F_0(t)  \right\} \stackrel{{\cal D}}{\to} [4F_0(t)\{1-F_0(t)\}f_0(t)/g(t)]^{1/3}\mathbb{C},
\end{align}
where $\mathbb{C} = \arg\min_{t \in \R}\{ \mathbb{Z}(t)+ t^2\}$ and $\mathbb{Z}(t)$ is a standard two-sided Brownian motion process, originating from zero. To construct a confidence interval using the result given in (\ref{asumpt_distr_MLE}), we therefore need estimates of $f_0$ and $g$. If one is willing to make assumptions on the underlying distribution functions of $X$ and $T$, parametric methods can be used. This was done in e.g. \cite{Keiding:96} using Weibull models for both $f_0$ and $g$. Non-parametric estimates obtained by kernel smoothing were considered in \cite{banerjee_wellner:2005}. The choice of the tuning parameter is however crucial for a good performance of the confidence intervals. 

\cite{banerjee_wellner:2005} proposed a likelihood-ratio-based method for constructing pointwise confidence intervals for the distribution function in current status models. Starting from the likelihood ratio statistic
\begin{align*}
LR(\theta_0) = 2\left(\log \ell_n(\hat F_n)- \log \ell_n(\hat F_n^{\theta_0})  \right),
\end{align*}
for testing the null hypothesis $F_0(t) = \theta_0$, which has asymptotic distribution $\mathbb{D}$ characterized in \cite{mouli_jon:01}, the authors estimate the interval by
\begin{align*}
\left\{\theta \in (0,1) : LR(\theta) \leq d_{1-\a}   \right\},
\end{align*}
where $d_{1-\a}$ is the $(1-\a)$th percentile of $\mathbb{D}$. Here $\hat F_n$ denotes the unconstrained MLE maximizing (\ref{likelihood}) and $\hat F_n^{\theta_0}$ denotes the MLE of $F_0$ under the constrained that $F_0(t) = \theta_0$. 
The LR-based method avoids estimation of $f_0$ and $g$ since, under the null hypothesis, the limiting distribution is free of the underlying parameters. 
In contrast with the situation for the MLE itself and for the SMLE, no analytical information is available for the distribution $\mathbb{D}$ and the distribution $\mathbb{D}$ is estimated via simulations. A short proof of the characterization of the ``Chernoffian"  limit distribution of the MLE itself (without being restricted) in terms of Airy functions has recently been given in \cite{piet_nico_steve:15} and the asymptotic distribution of the SMLE is just normal. So in these cases tables of the critical values are available (for the MLE they are given in \cite{piet_jon:01}).
Tables to determine the asymptotic critical values for the LR test are available in \cite{mouli_jon:01}. 

More recently, bootstrap methods for constructing confidence intervals in the current status model have been considered. It is however proved in \cite{abrevaya_huang2005} that the naive bootstrap procedure, which simply resamples the original data will not work for pointwise confidence intervals for the distribution function $F_0$ if it is estimated by the MLE $\hat F_n$. A consistent model-based bootstrap procedure was introduced in \cite{SenXu2015}. Instead of resampling the $(T_i,\dd_i)$, the authors proposed resampling the $\dd_i$ from a Bernoulli distribution with success probability given by $\tilde F(T_i)$, where $\tilde F$ is an estimator of $F_0$ satisfying some smoothness conditions (that are not fulfilled by the ordinary MLE $\hat F_n$). The obtained bootstrap sample  $(T_1,\dd_1^*),\dots,(T_n,\dd_n^*)$ can next be used for interval estimation. In this case one computes the MLE $F_n^*$ in the bootstrap samples, and subtracts the smooth distribution $\tilde F$, generating the $\dd_i^*$. The confidence intervals are then formed by taking
$$
\left[\hat F_n(t)-V^*_{1-\a/2}(t),\hat F_n(t)-V^*_{\a/2}(t)\right],
$$
where $V_{\a}^*$ us the $\a$th quantile of $B$ values of
\begin{align*}
F_n^*(t)-\tilde F(t),
\end{align*}
where $B$ is the number of bootstrap samples taken. For current status and related models, some research has been reported recommending the use of this smooth bootstrap procedure. A smooth bootstrap calibration was used in \cite{durot2010} for a goodness-of-fit-test for monotone functions and in \cite{piet:11e} for a likelihood ratio type two-sample test for current status data. \cite{cecile_rik:12} used a similar approach to determine the critical value for testing equality of functions under monotonicity constraints. 
The main motivation for recommending the smooth bootstrap are the negative results by \cite{abrevaya_huang2005} and \cite{kosorok:08} proving the inconsistency of the naive bootstrap for generating the limiting distribution of the MLE. 

Recently, it was however proved in \cite{kim_piet:17} that the naive bootstrap of resampling with replacement from the data does works in case the underlying distribution function is estimated by the SMLE or in case interest is in other functionals than the values of the distribution function. The validity of the naive bootstrap for constructing pointwise confidence intervals around the SMLE and for doing inferences in the current status linear regression model \cite{GroeneboomHendrickx16} are illustrated in \cite{kim_piet:17}. Although \cite{durot2010} conjecture that the naive  bootstrap fails in their setting, this result suggests that this conjecture might be incorrect and that applications of the naive bootstrap involving the Grenander estimator are worthy of study in further research. 

Besides considering the naive or smooth bootstrap one could moreover consider resampling  the $\dd_i$ from the MLE itself. Simulation studies in \cite{cecile_rik:12} even suggest that the smooth bootstrap does not necessarily perform better than bootstrapping from the Grenander estimator in their setting.  So far, the theoretical properties of the latter bootstrap procedure remain an open problem. As a consequence of the positive result by \cite{kim_piet:17}, we conjecture that bootstrapping from the MLE might very well work for pointwise confidence intervals in the current status model, if one uses the right functional of the model as a basis for the intervals.

The outline of this paper is as follows. In Section \ref{section:pointwiseCI} we introduce the current status model, describe the construction of the Smoothed Maximum Likelihood estimator (SMLE) for the distribution function and explain how the smooth bootstrap procedure can be used to construct pointwise confidence intervals for the distribution function. The asymptotic behavior of the confidence intervals is also given in Section \ref{section:pointwiseCI} together with some details on how to improve the performance of our intervals. Simulation studies are reported in Section \ref{section:simulations} to demonstrate the finite sample behavior of our confidence intervals and to compare our method with existing methods proposed by \cite{banerjee_wellner:2005} and \cite{SenXu2015}. In Section \ref{section:realdata} we illustrate our methods on the Hepatitis A dataset and the Rubella dataset. Some concluding remarks are pointed out in Section \ref{section:concluding remarks}. An appendix is included in Section \ref{section:appendix} containing the proofs of our main results.

The proofs of our results are rather non-trivial and use techniques totally different from the
techniques used in \cite{banerjee_wellner:2005} and \cite{SenXu2015}. The latter fact is not unexpected, since the intervals are based on recently developed smooth functional theory (see, for example, \cite{piet_geurt:15}) and deal with asymptotically normal limits instead of the non-standard limits for the (restricted) MLE. We hope that the present paper serves the purpose of making these techniques more widely known. Rcpp scripts for all methods, discussed here (also the methods of \cite{banerjee_wellner:2005} and \cite{SenXu2015}) are available in \cite{github:15}.

\section{Pointwise confidence intervals in the current status model}
\label{section:pointwiseCI}
Consider an i.i.d sample $X_1,\ldots, X_n$ with distribution function $F_0$, where the distribution corresponding to $F_0$ has support $[0,M]$ and let $F_0$ have a density $f_0$ staying away from zero on $[0,M]$.
The observations in the current status model are $(T_1,\dd_1= 1_{\{X_1\leq T_1\}}),\ldots,(T_n,\dd_n= 1_{\{X_n\leq T_n\}})$ where the $T_i$ are independent of all $X_j's$ and have a distribution $G$ with Lebesgue density $g$ with a support that contains $[0,M]$. We assume that $g$ stays away from zero on $[0,M]$ and has a bounded derivative $g'$. In this section we develop a method for confidence interval estimation for $F_0(t)$ when $t$ is an interior point of $[0,M]$ and $f_0$ has a continuous derivative at $t$. We estimate $F_0(t)$ by the Smoothed Maximum Likelihood estimator (SMLE) obtained by first estimating the MLE $\hat F_n$ and then smoothing this using a smoothing kernel, i.e.,
\begin{align}
\label{SMLE}
\tilde F_{nh}(t)=\int\IK\left(\frac{t-x}{h}\right)\,d\hat F_n(x),
\end{align}
where $\IK$ is an integrated kernel,
\begin{align*}
\IK(u)=\int_{-\infty}^u K(x)\,dx,
\end{align*}
and where $h$ is a chosen bandwidth. Here  $d\hat F_n$ represents the jumps (``masses'') of the discrete distribution function $\hat F_n$ and $K$ is one of the usual kernels, used in density estimation (i.e. $K$ is a probability density with support $[-1,1]$ which is symmetric and twice continuously differentiable on $\R$). We use the notations $K_h$ and $\IK_h$ to denote the scaled versions of $K$ and $\IK$ respectively, given by:
\begin{align*}
K_h(u) = h^{-1}K(u/h) \quad \text{and} \quad \IK_h(u) = \IK(u/h). 
\end{align*} 
It is well-known that the MLE $\hat F_n$ can be characterized as the left continuous slope of the convex minorant of a cumulative sum diagram formed by the point $(0,0)$ and
\begin{align*}
\left(\sum_{j=1}^i w_j,\sum_{j=1}^i f_{1j}\right),\,i=1,\dots,m,
\end{align*}
where the $w_j$ are weights, given by the number of observations at point $T_{(j)}$, assuming that $T_{(1)}<\dots<T_{(m)}$ ($m$ being the number of different observations in the sample) are the order statistics of the sample $(T_1,\dd_1),\dots,(T_n,\dd_n)$ and where $f_{1j}$ is the number of $\dd_k$ equal to one at the $j$th order statistic of the sample. When no ties are present in the data (as is indeed the case in our simulations due to continuity assumptions of $g$, but is often not satisfied in real data examples), $w_j =1, m =n$ and $f_{1j} = \dd_{(j)}$, where $\dd_{(j)}$ corresponds to $T_{(j)}$.

From \cite{piet_geurt_birgit:10} (Theorem 4.2 p.\,365) it follows  that,
\begin{align*}
n^{2/5}\left\{\tilde F_{nh}(t)-F_0(t)\right\}\stackrel{{\cal D}}\longrightarrow N(\b,\s^2),
\end{align*}
where
\begin{align}
\label{mu-sigma}
\b=\frac{c^2f_0'(t)}{2}\int u^2 K(u)\,du \quad \text{and} \quad \s^2=\frac{F_0(t)\{1-F_0(t)\}}{cg(t)}\int K(u)^2\,du.
\end{align}

In the remainder of this Section we first introduce a procedure for interval estimation based on a smooth bootstrap resampling scheme and next elucidate some adjustments to improve the performance of the bootstrap confidence intervals. 

\subsection{The smooth bootstrap}
We obtain a bootstrap sample $(T_1,\dd_1^*),\dots,(T_n,\dd_n^*)$ by keeping the $T_i$ in the original sample fixed and by resampling the $\dd_i^*$ from a Bernoulli distribution with probability $\tilde F_{nh}(T_i)$. 
The following bootstrap $1-\a$ interval is suggested:
\begin{equation}
\label{CI_type1}
\left[\tilde F_{nh}(t)-U_{1-\a/2}^*(t),\tilde F_{nh}(t)-U_{\a/2}^*(t)\right],
\end{equation}
where $U_{\a}^*(t)$ is the $\a$th quantile of $B$ values of 
\begin{align*}
Z_{nh}(t)= \tilde F_{nh}^*(t)-\int \IK_h(t-u)\,d\tilde F_{nh}(u).
\end{align*}
Here $\tilde F_{nh}^*(t)$ is the SMLE in the bootstrap sample defined in the same way as in (\ref{SMLE}) but with $\hat F_n$ replaced by $\hat F_n^*$, i.e. the MLE in the bootstrap sample.

Under the model assumptions stated at the beginning of this section, we have the following main result showing that $n^{2/5}Z_{nh}(t)$ converges to a normal distribution with the same asymptotic variance as the SMLE. The proof of this result can be found in the Appendix. Some theoretical aspects of the bootstrap MLE $\hat F_n^*$, important for proving our main result, are given in Subsection \ref{section:asymptotic} below.
\begin{theorem}
	\label{th:bootstrap_SMLE}
	Let $h=h_n\sim cn^{-1/5}$, and let $\s^2$ be given by (\ref{mu-sigma}),then:
	\begin{align*}
	n^{2/5}\left\{\tilde F_{nh}^*(t)-\int \IK_h(t-u)\,d\tilde F_{nh}(u)\right\}\stackrel{{\cal D}}\longrightarrow N(0,\s^2),
	\end{align*}
	given the data $(T_1,\dd_1),\dots,(T_n,\dd_n)$,  almost surely along sequences $(T_1,\dd_1),(T_2,\dd_2),\dots$.
\end{theorem}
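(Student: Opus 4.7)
The plan is to reduce $Z_{nh}(t)$, after integration by parts, to a conditional triangular-array sum of bootstrap residuals, and then apply a Lindeberg CLT. The centering $\int\IK_h(t-u)\,d\tilde F_{nh}(u)$ is chosen precisely to cancel the smoothing bias that would otherwise arise in the bootstrap world where $\tilde F_{nh}$ plays the role of $F_0$, so only a ``stochastic'' term remains to analyze. Concretely, I first write
\begin{align*}
Z_{nh}(t) \;=\; \int \IK_h(t-u)\,d[\hat F_n^*(u)-\tilde F_{nh}(u)] \;=\; \int_0^M K_h(t-u)\,\bigl[\hat F_n^*(u)-\tilde F_{nh}(u)\bigr]\,du,
\end{align*}
where the second equality is integration by parts; the boundary terms drop because $\hat F_n^*$ and $\tilde F_{nh}$ are both distribution functions on $[0,M]$.

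The next and central step is an asymptotic linearization, namely showing that, almost surely in the data,
\begin{align*}
\int_0^M K_h(t-u)\bigl[\hat F_n^*(u)-\tilde F_{nh}(u)\bigr]\,du \;=\; \frac{1}{n}\sum_{i=1}^n \frac{K_h(t-T_i)}{g(T_i)}\bigl[\dd_i^*-\tilde F_{nh}(T_i)\bigr] \;+\; R_n^*,
\end{align*}
with bootstrap remainder $R_n^*=o_{P^*}(n^{-2/5})$. The argument follows the SMLE expansion of \cite{piet_geurt_birgit:10} transported into the bootstrap world: the convex-minorant characterization of the bootstrap MLE, applied to the test function $K_h(t-\cdot)\hat F_n^*(1-\hat F_n^*)/g$, yields the approximate score identity $n^{-1}\sum_i K_h(t-T_i)\{\dd_i^*-\hat F_n^*(T_i)\}/g(T_i)\approx 0$, and an empirical-process bound then replaces each sum $n^{-1}\sum_i K_h(t-T_i)H(T_i)/g(T_i)$ by the integral $\int K_h(t-u)H(u)\,du$ for the relevant $H$. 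The error is controlled by the bootstrap cube-root rate $\sup_{|u-t|\le\eta}|\hat F_n^*(u)-\tilde F_{nh}(u)|=O_{P^*}(n^{-1/3})$ provided by the asymptotic theory in Subsection \ref{section:asymptotic}.

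Given the linearization, a conditional Lindeberg CLT finishes the argument. Conditionally on the data, the summands $W_i^*=K_h(t-T_i)\{\dd_i^*-\tilde F_{nh}(T_i)\}/g(T_i)$ are independent, mean zero, and bounded by $O(h^{-1})$. With $h=cn^{-1/5}$, a kernel calculation together with the a.s.\ law of large numbers for the $T_i$'s and uniform consistency $\tilde F_{nh}\to F_0$ near $t$ gives
\begin{align*}
n^{4/5}\cdot\frac{1}{n^2}\sum_{i=1}^n \frac{K_h(t-T_i)^2}{g(T_i)^2}\,\tilde F_{nh}(T_i)\{1-\tilde F_{nh}(T_i)\} \;\longrightarrow\; \frac{F_0(t)\{1-F_0(t)\}}{c\,g(t)}\int K(v)^2\,dv \;=\; \s^2,
\end{align*}
and the Lyapunov condition is immediate since $|W_i^*|^3=O(h^{-3})$ and $(nh)^{-1/2}=O(n^{-2/5})$.

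The hard part will be the linearization. In contrast with the non-bootstrap case, the ``truth'' $\tilde F_{nh}$ is itself random, so the cube-root rates, maximal inequalities and empirical-process moment bounds for $\hat F_n^*$ have to be established on an event of full $P$-probability (so that ``almost surely in the data'' makes sense) with envelopes that do not deteriorate with the data-dependent drift. This is exactly what the bootstrap asymptotics of Subsection \ref{section:asymptotic} are designed to provide; once those inputs are in place, the remaining steps---integration by parts, the empirical-process replacement, and the conditional CLT---are largely routine.
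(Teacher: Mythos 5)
Your proposal is essentially the paper's own proof: integration by parts turns the statistic into $\int K_h(t-u)\{\hat F_n^*(u)-\tilde F_{nh}(u)\}\,du$, the convex-minorant (score) characterization of $\hat F_n^*$ linearizes this into $n^{-1}\sum_{i}K_h(t-T_i)\{\dd_i^*-\tilde F_{nh}(T_i)\}/g(T_i)$ plus two remainders (the score-approximation error and the empirical-process replacement of $d\G_n$ by $dG$), and a conditional Lindeberg--Feller CLT with exactly your variance computation gives the $N(0,\s^2)$ limit almost surely along the data. The one imprecision is your appeal to a local sup-norm rate $O_{P^*}(n^{-1/3})$: Subsection \ref{section:asymptotic} in fact supplies conditional $L_2$-bounds (via the switch relation and Lemma \ref{lemma_Th11.3}), and the paper controls the remainders by $Ch^{-2}\int_{t-h}^{t+h}\{\hat F_n^*(u)-\tilde F_{nh}(u)\}^2\,d\G_n(u)=O_p^*\left(h^{-1}n^{-2/3}\right)=o_p^*\left(n^{-2/5}\right)$ together with a bracketing-entropy bound for the $d(\G_n-G)$ term, which is what your sketch should invoke in place of the sup bound (a sup-norm version would carry a logarithmic factor, though even that would still be $o_p^*(n^{-2/5})$).
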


Note that we can write, 
\begin{align*}
\int \IK_h(t-u)\,d\tilde F_{nh}(u) &=\int \IK_h(t-u) \left\{\int  K_h(u-v) d\hat F_n(v) \right\}du.\\
&= \int  \int\IK((t-v)/h -w)K(w)\,dwd\hat F_n(v).
\end{align*}
In practice we therefore have to compute the convolution kernel $\widetilde{\IK}$, defined by:
\begin{align}
\label{def_tilde_IK}
\widetilde{\IK}(x) =  \int\IK(x -w)K(w)\,dw.
\end{align}
A picture of the functions $K$, $\IK$ and $\widetilde{\IK}$ is given in Figure \ref{fig:kernels} using the triweight kernel defined by:
\begin{align*}
K(u)=\frac{35}{32}\left(1-u^2\right)^31_{[-1,1]}(u).
\end{align*}

\begin{figure}[!ht]
	\centering
	\begin{subfigure}[b]{0.3\textwidth}
		\includegraphics[width=\textwidth]{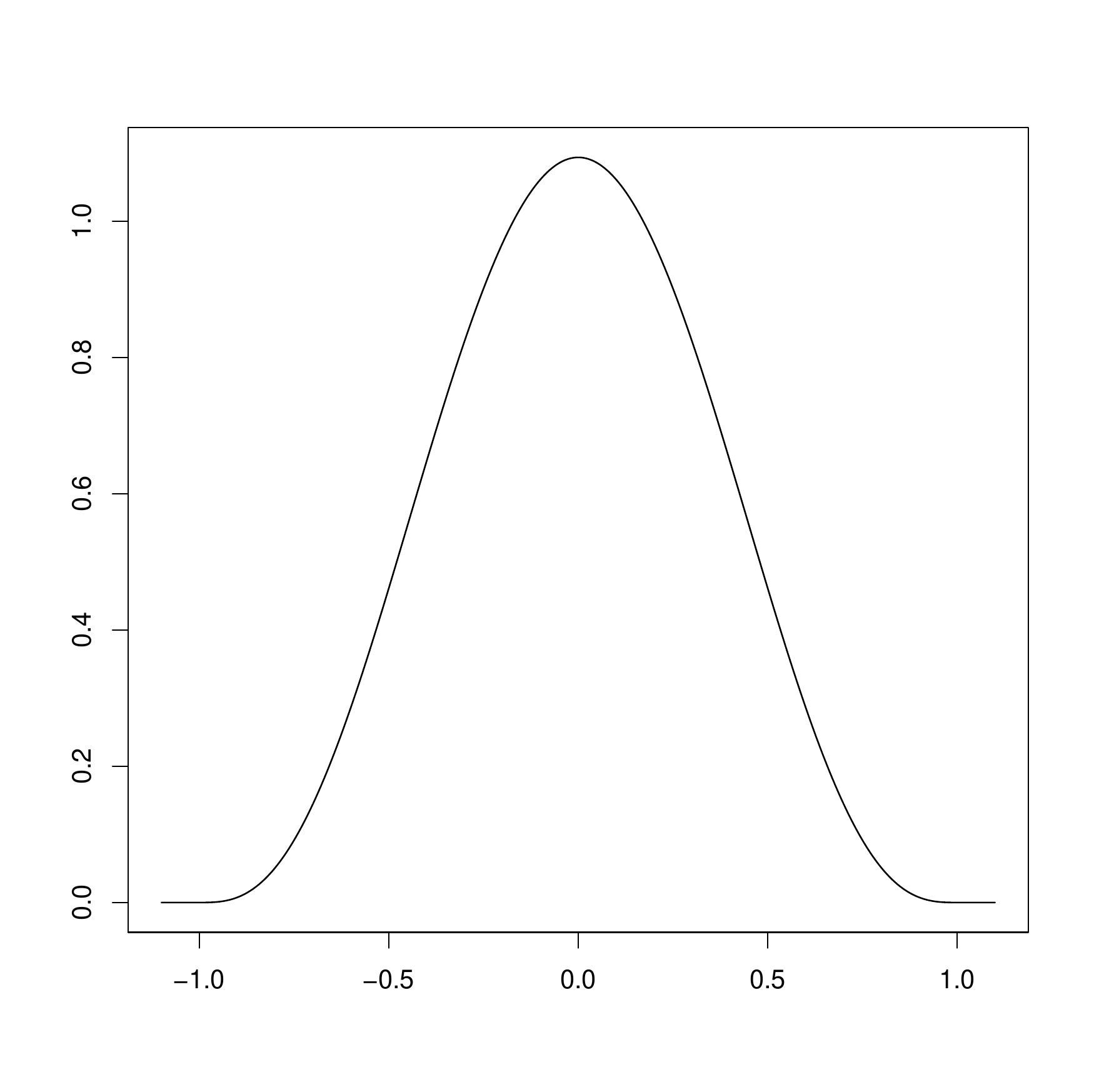}
		\caption{}
	\end{subfigure}
	\begin{subfigure}[b]{0.3\textwidth}
		\includegraphics[width=\textwidth]{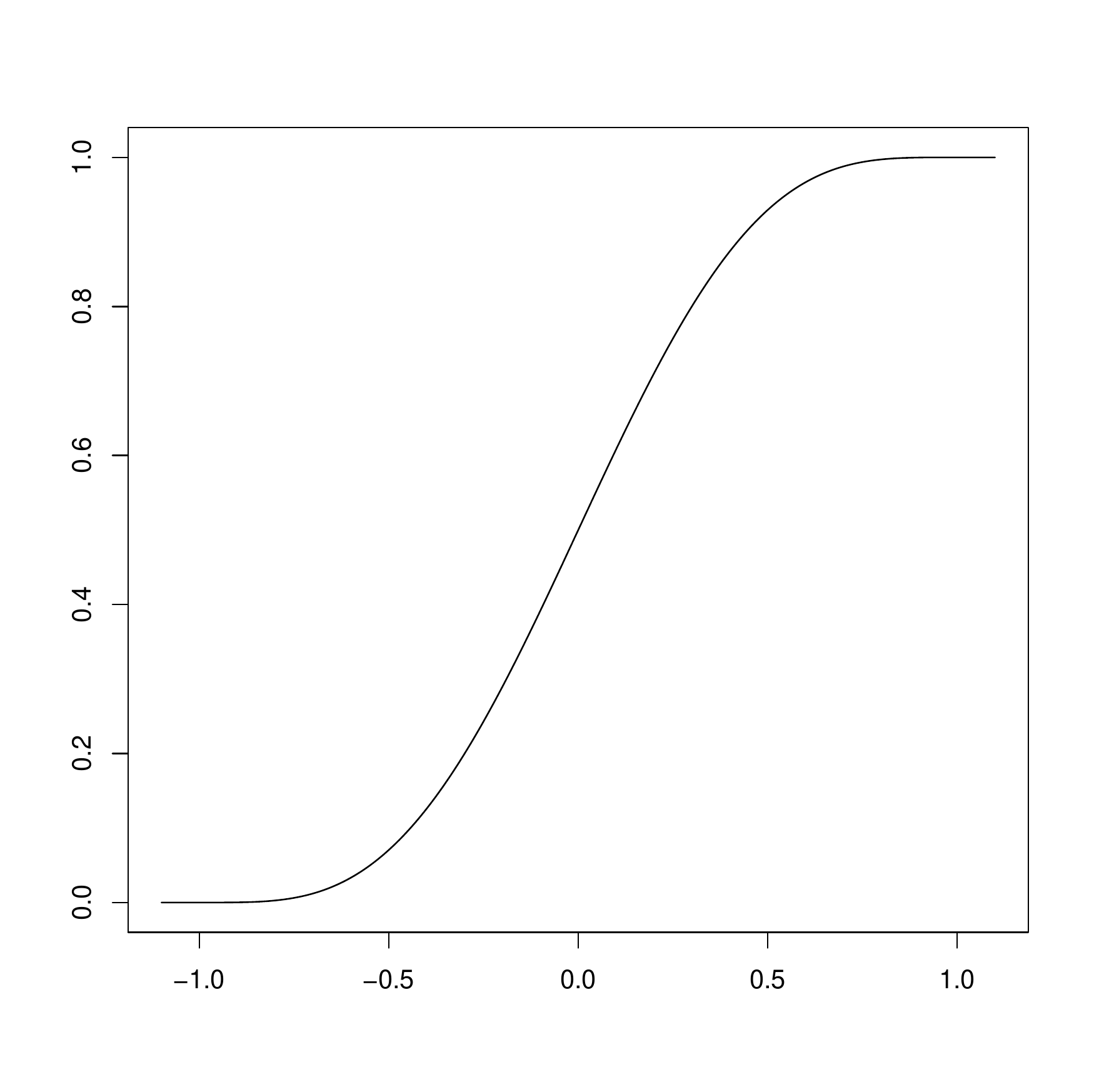}
		\caption{}
	\end{subfigure}
	\begin{subfigure}[b]{0.3\textwidth}
		\includegraphics[width=\textwidth]{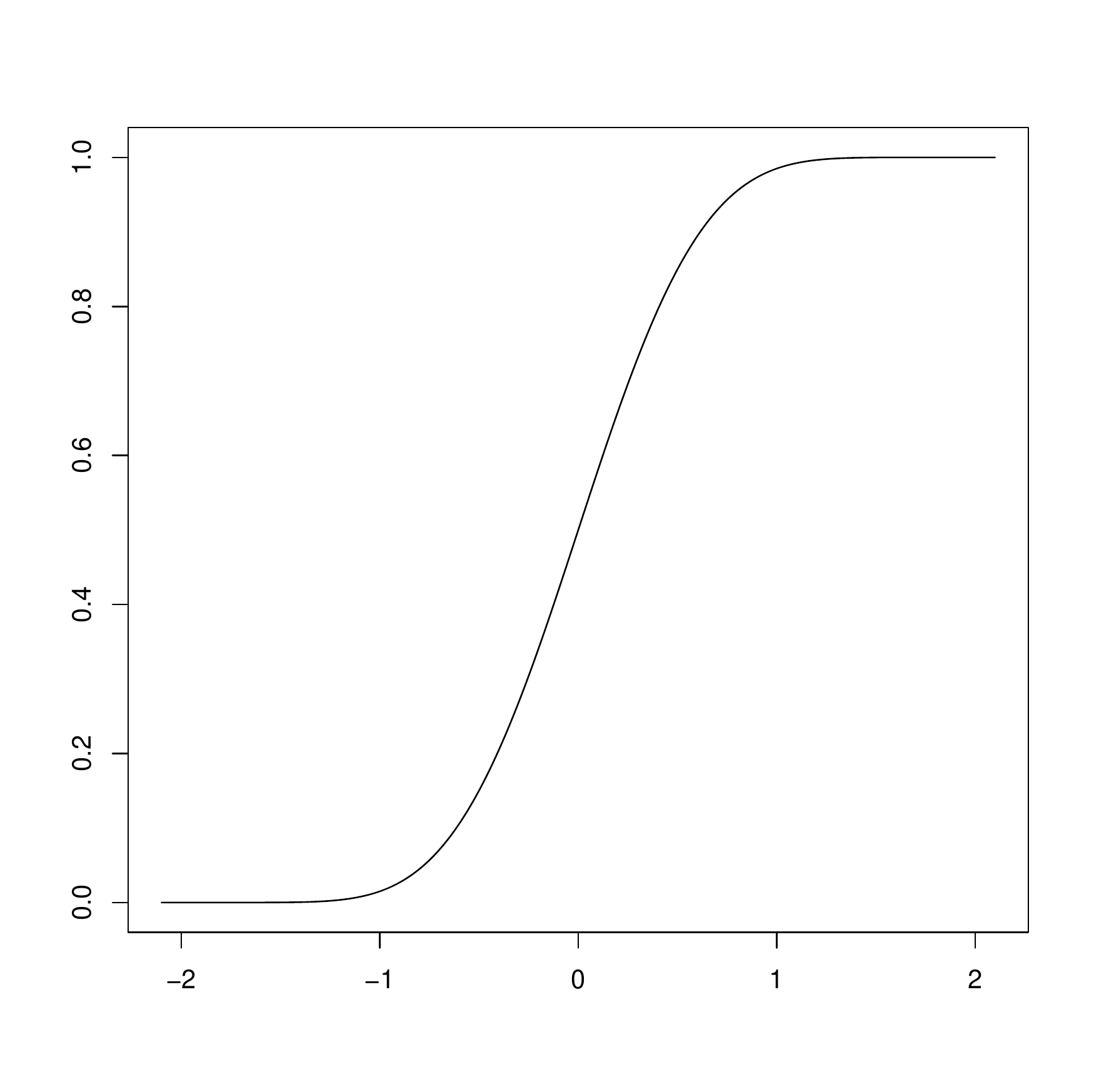}
		\caption{}
	\end{subfigure}
	\caption{(a) triweight kernel $K: x\mapsto \tfrac{35}{32}(1-x^2)^31_{[-1,1]}(x)\,$, (b) integrated triweight kernel $\IK: x \mapsto \int_{-\infty}^x K(w)\,dw$ and (c) the convolution $\widetilde{\IK}:x\mapsto \int \IK(x-w) K(w)\,dw$. }
	\label{fig:kernels}
\end{figure}

\begin{remark}{\rm Note that we subtract the integrated SMLE using the original data instead of the SMLE itself in the definition of $Z_{nh}(t)$ due to the bias of the SMLE $\tilde F_{nh}(t)$. This is in line with the method proposed by \cite{SenXu2015} where the authors subtract the SMLE instead of the MLE of the original data for constructing confidence intervals around the MLE. One needs to introduce an additional level of smoothing in order to construct valid intervals using the smooth bootstrap procedure. }
\end{remark}

\subsection{Asymptotic properties of the smooth bootstrap}
\label{section:asymptotic}
It is well-known that the $L_2$-distance between the MLE $\hat F_n$ in the original sample and the true distribution function $F_0$ is of order $n^{-1/3}$ (see e.g. \cite{geer:00} example 7.4.3). In the proof of Theorem \ref{th:bootstrap_SMLE} we need the following result
\begin{equation}
\label{local_L_2_bound1}
\int_{t-h}^{t+h}\bigl\{\hat F_n^*(x)-\tilde F_{nh}(x)\bigr\}^2\,dx= O_p^*\left(h n^{-2/3}\right),
\end{equation}
where $O_p^*\left(h n^{-2/3}\right)$ means that for all $\e>0$ and almost all sequences $(T_1,\dd_1),(T_2,\dd_2),\dots$, there exists an $M>0$ such that
\begin{align*}
P_n^*\left\{\int_{t-h}^{t+h}\bigl\{\hat F_n^*(x)-\tilde F_{nh}(x)\bigr\}^2\,dx \ge Mhn^{-2/3}\right\}<\e,
\end{align*}
for all large $n$. Here $P_n^*$ denotes the conditional probability measure given $(T_1,\dd_1),\ldots (T_n,\dd_n)$. Note that (\ref{local_L_2_bound1}) does not follow from a conditional global bound on the $L_2$-distance between the MLE $\hat F_n^*$ in the bootstrap sample and the SMLE $\tilde F_{nh}$ in the original sample of order $n^{-1/3}$ and that this is a refinement of the usual Hellinger distance calculations. 

By using the so-called ``switch-relation'' which reduces the study of the MLE $\hat F_n^*$ to the study of an inverse process (see e.g. \cite{piet_geurt:14} p. 320) we show in the Appendix that 
\begin{align}
\label{L2_bound_expectation}
E_n^*\left\{\hat F_n^*(t)-\tilde F_{nh}(t)\right\}^2 \leq Kn^{-2/3} \quad \forall t \in [0,M],
\end{align}
where $E_n^*$ denotes the conditional expectation given $(T_1,\dd_1),\ldots (T_n,\dd_n)$. From this result it follows that (\ref{local_L_2_bound1}) holds.

In the remainder of this section we describe techniques to improve the confidence intervals defined in (\ref{CI_type1}) by (a) considering estimation of the variance, (b) taking into account the boundary effects of kernel estimates and (c) estimating the asymptotic bias $\b$ defined in (\ref{mu-sigma}).

\subsection{Studentized confidence intervals}
Usually the performance of the bootstrap confidence intervals works best if one uses a pivot, obtained by Studentizing. In each bootstrap sample we therefore estimate the variance $\s^2$ defined in (\ref{mu-sigma}), apart from the factor $cg(t)$, which drops out in the Studentized bootstrap procedure, by,
\begin{align}
\label{S*}
S_{nh}^*(t)=n^{-2}\sum_{i=1}^n K_h(t-T_i)^2\left(\dd_i^*-\hat F_n^*(T_i)\right)^2.
\end{align}
The variance estimate defined in (\ref{S*}) is inspired by the fact that the SMLE $\tilde F_{nh}$ is asymptotically equivalent to the toy estimator,
\begin{align*}
\tilde F_{nh}^{toy}(t) = \int\IK_h(t-x)\,d F_0(x) + \frac1n \sum_{i=1}^{n} \frac{K_h(t-T_i)\{\dd_i -F_0(T_i)\}^2}{g(T_i)}, 
\end{align*}
which has sample variance
\begin{align*}
S_n(t)=\frac1{n^{2}}\sum_{i=1}^n\frac{ K_h(t-T_i)^2\left(\dd_i- F_0(T_i)\right)^2}{g(T_i)^2}.
\end{align*}
We next compute
\begin{align*}
W_{nh}^*(t)=\frac{\tilde F_{nh}^*(t)-\int \IK_h(t-u)\,d\tilde F_{nh}(u)}{\sqrt{S_{nh}^*(t)}}\,.
\end{align*}
Let $Q_{\a}^*(t)$ be the $\a$th quantile of $B$ values of $W_{nh}^*(t)$, where $B$ is the number of bootstrap samples. Then the following bootstrap $1-\a$ interval is suggested:
\begin{equation}
\label{CI_type2}
\left[\tilde F_{nh}(t)-Q_{1-\a/2}^*(t)\sqrt{S_{nh}(t)},
\tilde F_{nh}(t)-Q_{\a/2}^*(t)\sqrt{S_{nh}(t)}\right],
\end{equation}
where $S_{nh}(t)$ is the variance estimate in the original sample obtained by replacing $\dd_i^*-\hat F_n^*(T_i)$ in (\ref{S*}) by $\dd_i-\hat F_n(T_i)$. Note that we do not need an estimate of the density $g$ in each of the observations $T_i$ as a consequence of the fact that $g(u)$ is close to $g(t)$ for $u \in [t-h,t+h]$. If, on the contrary, one wants to consider Wald-type confidence intervals for the distribution function based on the asymptotic normality results of the SMLE, estimation of $g$ is inevitable.

\subsection{Boundary correction}
\label{subsec:boundary-correction}
It is well-known that kernel density and distribution estimators without boundary correction are generally inconsistent at the boundary of the support $[0,M]$. We therefore use the boundary correction method proposed in \cite{piet_geurt:14}, and define the SMLE as
\begin{align}
\label{SMLE_corr}
\tilde F_{nh}^{(bc)}(t)=\int\left\{\IK\left(\frac{t-x}{h}\right) + \IK\left(\frac{t+x}{h}\right) - \IK\left(\frac{2M-t-x}{h}\right)  \right\}\,d\hat F_n(x).
\end{align}
The boundary corrected version of $Z_{nh}^*(t)$ is defined by:
\begin{align*}
\label{SMLE_integ_corr}
Z_{nh}^{(bc)*}(t) &= \tilde F_{nh}^{(bc)*}(t) - \int\left\{\IK_h(t-x) + \IK_h(t+x) - \IK_h(2M-t-x)  \right\}\,d\tilde F_{nh}^{(bc)}(x).
\end{align*}
The result of Theorem \ref{th:bootstrap_SMLE} remains valid under this boundary correction. We also have the following lemma.
\begin{lemma}
	\label{lemma:boundary_correction}
	Let the boundary corrected estimate $\tilde F_{nh}^{(bc)}$ be defined by (\ref{SMLE_corr}), and let $\widetilde{\IK}_h$ be defined by:
	\begin{align*}
	\widetilde{\IK}_h(u)=\widetilde{\IK}(u/h),\qquad u\in\R.
	\end{align*}
	where the convolution kernel $\widetilde{\IK}$ is defined by (\ref{def_tilde_IK}). Moreover, let $0<h\le M/3$. Then:
	\begin{align}
	&\int \left\{\IK_h(t-x)+\IK_h(t+x)-\IK_h(2M-t-x)\right\}\,d\tilde F_{nh}^{(bc)}(x)\nonumber\\
	&=\int\left\{\widetilde{\IK}_h(t-x) + \widetilde{\IK}_h(t+x) - \widetilde{\IK}_h(2M-t-x) \right\}\,d\hat F_n(x).
	\end{align}
\end{lemma}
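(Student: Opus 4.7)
The plan is to substitute the definition of $\tilde F_{nh}^{(bc)}$ on the left-hand side, swap the order of integration via Fubini, and reduce the identity to a pointwise claim in $y$ handled by a short case analysis. The formula defining $\tilde F_{nh}^{(bc)}$ gives $\tilde F_{nh}^{(bc)}(0)=0$ and $\tilde F_{nh}^{(bc)}(M)=1$, so $\tilde F_{nh}^{(bc)}$ is a genuine distribution function on $[0,M]$, and differentiating under the integral sign yields the density
$$\tilde f_{nh}^{(bc)}(x)=\int \bigl[K_h(x-y)+K_h(x+y)+K_h(2M-x-y)\bigr]\,d\hat F_n(y),\qquad x\in(0,M).$$
Writing $\bar\IK(t,x):=\IK_h(t-x)+\IK_h(t+x)-\IK_h(2M-t-x)$ as shorthand and applying Fubini, the lemma reduces to the pointwise identity
$$I(y):=\int_0^M \bar\IK(t,x)\bigl[K_h(x-y)+K_h(x+y)+K_h(2M-x-y)\bigr]\,dx = \widetilde{\IK}_h(t-y)+\widetilde{\IK}_h(t+y)-\widetilde{\IK}_h(2M-t-y)$$
for every $y\in[0,M]$.

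Split $I(y)=J_1(y)+J_2(y)+J_3(y)$ along the three kernel terms and change variables $u=x-y$, $u=x+y$, $u=2M-x-y$ in the three pieces, respectively; each $J_i$ becomes an integral of $\bar\IK(t,\cdot)\,K_h(u)$ over a subinterval of the support $[-h,h]$ of $K_h$. For $y$ in the bulk, $y\in[h,M-h]$, the condition $h\le M/3$ places the supports of $K_h(x+y)$ and $K_h(2M-x-y)$ outside $[0,M]$, so $J_2(y)=J_3(y)=0$, while the substitution in $J_1$ gives the full convolution on $[-h,h]$, which equals the right-hand side by the definition of $\widetilde{\IK}_h$ together with $K_h(-u)=K_h(u)$.

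At the left boundary $y\in[0,h)$ one still has $J_3(y)=0$, but $J_1(y)$ is missing the piece $\int_{-h}^{-y}\bar\IK(t,y+u)K_h(u)\,du$ of the full convolution, while $J_2(y)=\int_y^h\bar\IK(t,u-y)K_h(u)\,du$; applying $u\mapsto -u$ to the missing piece, the two agree provided
$$\bar\IK(t,w)=\bar\IK(t,-w),\qquad w\in[-h,h].$$
Expanding and using $\IK_h(-u)=1-\IK_h(u)$, this identity reduces to $\IK_h(2M-t+w)=\IK_h(2M-t-w)$ on $[-h,h]$; both arguments, divided by $h$, exceed $(M-h)/h\ge 2$ under $h\le M/3$, hence both equal $1$. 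The right boundary case $y\in(M-h,M]$ is entirely symmetric: $J_2(y)=0$ and $J_3(y)$ fills the missing part of $J_1(y)$ via $\bar\IK(t,2M-v)=\bar\IK(t,v)$ for $v\in[M-h,M+h]$, which reduces in the same way to a saturation identity forced by $h\le M/3$. The main obstacle is precisely this bookkeeping: verifying which $\IK_h$ terms saturate to $0$ or $1$ under the bandwidth restriction and checking that the truncation limits after substitution line up correctly. Once these two boundary symmetries are in hand the computation is mechanical, and integrating the pointwise identity $I(y)=\widetilde{\IK}_h(t-y)+\widetilde{\IK}_h(t+y)-\widetilde{\IK}_h(2M-t-y)$ against $d\hat F_n(y)$ yields the stated equality.
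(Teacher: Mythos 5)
Your proof is correct, and it reaches the paper's conclusion by a somewhat different route. Both arguments share the same skeleton: substitute the density of $\tilde F_{nh}^{(bc)}$, interchange the order of integration, and verify a deterministic kernel identity. The paper, however, splits into cases according to the location of $t$ (namely $t\in[h,M-h]$, $t\in[0,h]$, $t\in[M-h,M]$), uses saturation (e.g.\ $\IK_h(t+u)=\IK_h(2M-t-u)=1$ for $t\in[h,M-h]$) to simplify the left-hand side, and then integrates by parts to move the derivative onto the kernel $\IK_h(t-\cdot)$ before changing variables; you instead split according to the location of the jump point $y$ of $\hat F_n$, avoid integration by parts altogether, and show directly that the reflected kernel terms $K_h(\cdot+y)$ and $K_h(2M-\cdot-y)$ exactly restore the portions of the convolution over $[-h,h]$ that are truncated by the restriction to $[0,M]$, via the symmetries $\IK_h(t-w)+\IK_h(t+w)-\IK_h(2M-t-w)$ being even in $w$ near $0$ and invariant under $s\mapsto 2M-s$ near $M$. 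Both proofs ultimately rest on the same two ingredients, the symmetry of $K$ (equivalently $\IK_h(x)+\IK_h(-x)=1$) and the saturation of the $\IK_h$ terms forced by $h\le M/3$ together with $t\in[0,M]$, but your ``reflection compensates truncation'' mechanism is arguably more transparent and shorter, since no boundary terms from integration by parts need to be checked. Two minor remarks: at the left boundary the relation $\IK_h(-u)=1-\IK_h(u)$ is not actually needed (the terms $\IK_h(t\pm w)$ simply swap, leaving only the saturated $\IK_h(2M-t\pm w)$ terms), though it is genuinely needed in the right-boundary identity; and, like the paper's proof, your pointwise identity implicitly assumes that $t$ and the jump points of $\hat F_n$ lie in $[0,M]$, which is the standing assumption of the section.
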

From Lemma \ref{lemma:boundary_correction}, it follows that we can write,
\begin{align*}
Z_{nh}^{(bc)*}(t) 
&=\int\left\{\IK_h(t-x) + \IK_h(t+x) - \IK_h(2M-t-x)  \right\}\,d\big(\hat F_n^*-\tilde F_{nh}^{(bc)}\bigr)(x)\\
&=\int \left\{\IK_h(t-x) + \IK_h(t+x) - \IK_h(2M-t-x)  \right\}d\hat F_n^*(x) \\
&\qquad - \int \left\{\widetilde{\IK}_h(t-x) + \widetilde{\IK}_h(t+x) - \widetilde{\IK}_h(2M-t-x)  \right\}d\hat F_n(x)
\end{align*}

The proof of Lemma \ref{lemma:boundary_correction} is given in the Appendix. A picture of the MLE, together with the SMLE, both corrected and uncorrected for boundary effects is shown in Figure \ref{fig:SMLE-exponential}(a) for a sample from the truncated exponential distribution on [0,2]  (See Section \ref{section:simulations} for a detailed description of the model). Figure \ref{fig:SMLE-exponential}(b) presents the boundary corrected and uncorrected integrated SMLE and clearly shows the improvement of the boundary correction.

\begin{figure}[!ht]
	\centering
	\begin{subfigure}[b]{0.35\textwidth}
		\includegraphics[width=\textwidth]{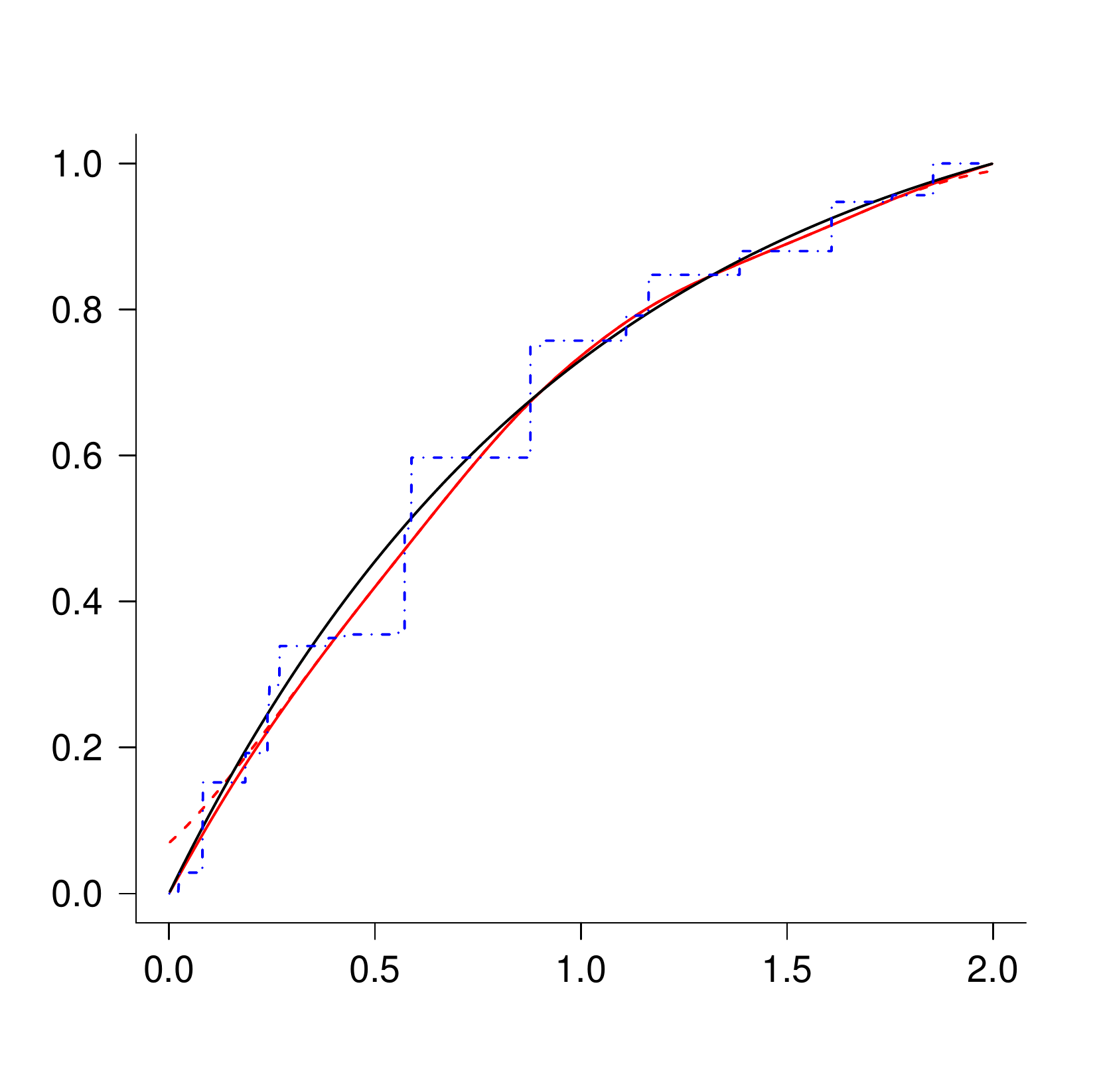}
		\caption{}
		\label{fig:SMLE-exponential1}
	\end{subfigure}
	\hspace{1cm}
	\begin{subfigure}[b]{0.35\textwidth}
		\includegraphics[width=\textwidth]{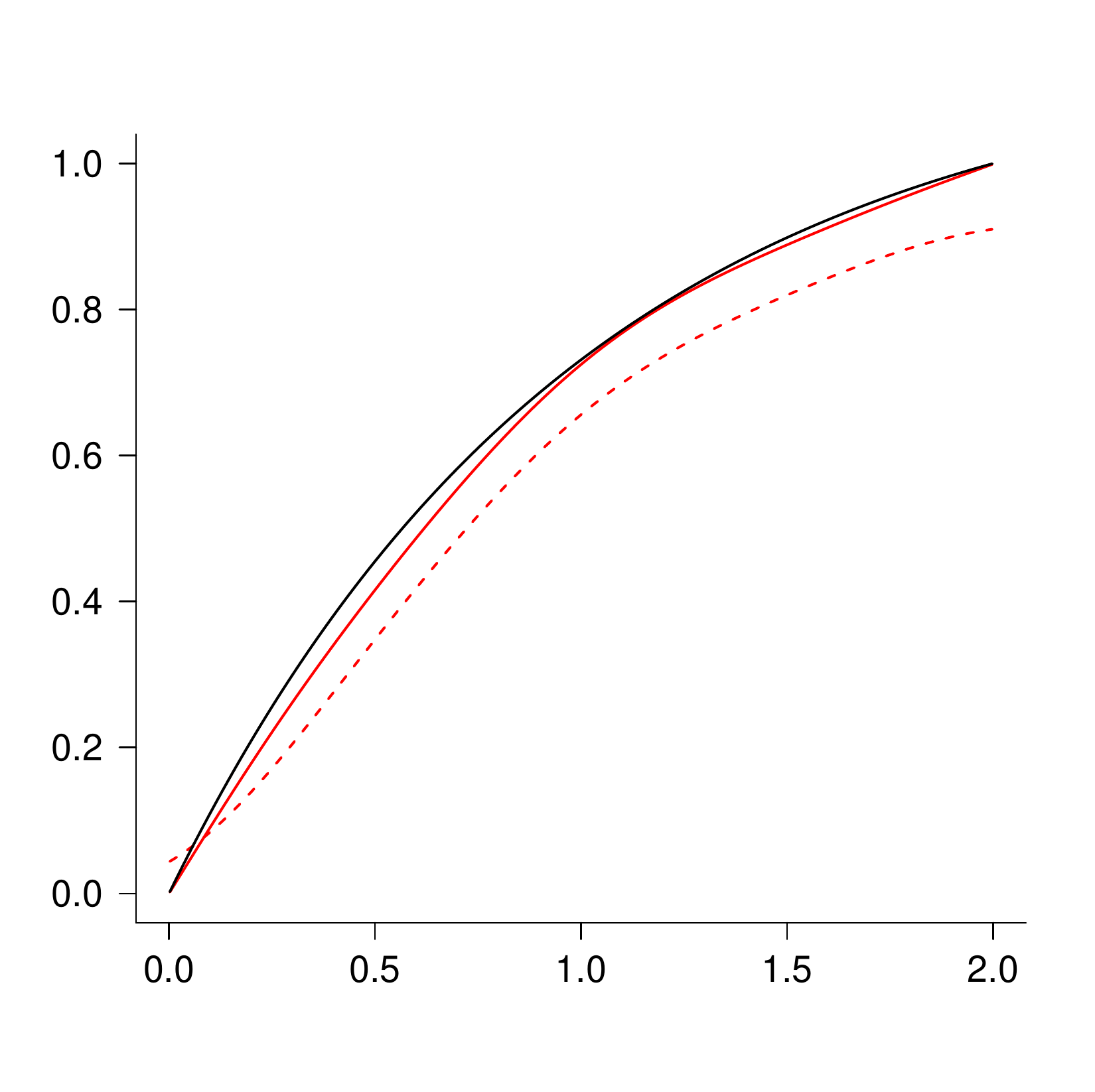}
		\caption{}
		\label{fig:SMLE-exponential-integrated}
	\end{subfigure}	
	\caption{Truncated exponential samples: (a) $F_0$ (black, solid), the MLE (blue,dashed-dotted), the SMLE with boundary correction (red, solid) and the SMLE without boundary correction (red, dashed) and (b) $F_0$ (black, solid), the integrated SMLE with boundary correction (red, solid) and the integrated SMLE without boundary correction (red, dashed); $n=1000$ and $h=2n^{-1/5}$.}
	\label{fig:SMLE-exponential}
\end{figure}

\subsection{Bias estimation}
When constructing confidence intervals around the SMLE, one should take into account the bias of the SMLE. Note that this matter does not occur for confidence intervals around the MLE, as in \cite{SenXu2015}, since the asymptotic distribution of the MLE is symmetric around zero. Direct estimation of the asymptotic bias $\b$ defined in (\ref{mu-sigma}) requires a consistent estimate of the second derivative $f_0'$ of the distribution function $F_0$. Although it is possible to estimate $f_0'$ consistently (see e.g.\cite{piet_geurt:15} p.\ 243), our computer experiments demonstrated that it is very difficult to estimate the bias term sufficiently accurately. We therefore propose to use an adaptive bandwidth $h=h(t)$ in order to improve the performance of the SMLE-based CIs. When $h$ is monotone increasing and continuous in $t$, then the SMLE $\tilde F_{nh(t)}$ is also monotone increasing and continuous in $t$. The effect of the adaptive bandwidth will be elucidated further in Section \ref{section:simulations}.

\section{Simulations}
\label{section:simulations}

In this section we illustrate the finite sample behavior of the SMLE-based CIs introduced in Section \ref{section:pointwiseCI}. We also compare our CIs with the likelihood-ratio based CIs proposed by \cite{banerjee_wellner:2005} and the CIs introduced in \cite{SenXu2015}. The latter CIs are both constructed around the MLE. We use two simulation examples to analyze the effect of Studentizing and the choice of the kernel $K$ on the behavior of our SMLE-based CIs. We propose a criterion for bandwidth selection and illustrate how undersmoothing the bandwidth can improve the behavior of the CIs.

In the first simulation setting both the event times and censoring times are sampled from a Uniform(0,2)-distribution. Since the derivative of the uniform density equals zero, the SMLE is an unbiased estimate of the uniform distribution function and no bias correction is needed. 
In the next simulation set-up, we consider the model where we generate the event times from a truncated exponential distribution on [0,2] and take Uniform(0,2)-censoring times.

For sample sizes $n=100,500,1000$ and $2000$ we generated 5000 data sets from both models. The boundary correction described in Section \ref{subsec:boundary-correction}, is used each time the SMLE is considered. The number of bootstrap samples within each simulation run equals $B=1000$.

Table \ref{table:uniform1} shows the coverage percentage, i.e. the number of times (out of the 5000 simulation runs) that $F_0(t)$ is not in the 95\% CIs,  and the average length of the 95\% CIs around $F_0(t)$ for the uniform model and $t=1$. We use the bandwidth $h=cn^{-1/5}$, where the constant $c=2.0$ corresponds to the length of the interval $[0,2]$. We consider two different choices for the kernel, the triweight kernel and the Epanechnikov kernel and compare the results of our SMLE-based CIs (\ref{CI_type2}) with the results for the MLE-based methods of \cite{banerjee_wellner:2005} and \cite{SenXu2015}.

\begin{table}[h]
	\caption{Uniform samples}
	\begin{tabular}{r|cc:cc|cc|cc:cc}
		\hline
		& \multicolumn{4}{c}{Studentized SMLE-based CI (\ref{CI_type2})}	& \multicolumn{2}{|c|}{Banerjee-Wellner}	& \multicolumn{4}{c}{Sen-Xu}\\	
		& \multicolumn{2}{c}{\textit{Triweight}} & \multicolumn{2}{:c}{\textit{Epanechnikov}} 	& \multicolumn{2}{|c|}{ }	& \multicolumn{2}{c}{\textit{Triweight}} & \multicolumn{2}{:c}{ \textit{Epanechnikov}} 	\\
		$n$  &CP&L&CP&L&CP&L&CP&L&CP&L\\
		\hline
		100   & 0.0326& 0.2799& 0.0358 & 0.2376 & 0.0486& 0.3897& 0.0568& 0.4620 & 0.0470 & 0.4625 \\
		500   & 0.0472& 0.1473& 0.0454 &0.1276 &  0.0504& 0.2311& 0.0636&0.2532 &0.0580 & 0.2536\\
		1000  & 0.0626& 0.1072& 0.0600& 0.0928 & 0.0498& 0.1846& 0.0654& 0.2024 & 0.0596&0.2028 \\
		2000  & 0.0494& 0.0827& 0.0502 & 0.0710 & 0.0414& 0.1466& 0.0516& 0.1598 & 0.0482 & 0.1599 \\
		\hline
		\multicolumn{11}{c}{CP: Coverage proportion, L = average length ($\a = 0.05$).  }
	\end{tabular}
	\label{table:uniform1}
\end{table}

For each point $ t_i=0.02,0.04,\dots,2$ Figure \ref{fig:uniform_kernels}(a) presents the coverage proportions for the Studentized SMLE-based CIs (\ref{CI_type2}) using the Epanechnikov kernel and the triweight kernel and illustrates that the choice of the kernel has only a small effect on the coverage proportions. The average length of the CIs, shown in Figure \ref{fig:uniform_kernels}(b) is smaller for the intervals constructed with the Epanechnikov kernel. 
\begin{figure}[!ht]
	\centering
	\begin{subfigure}[b]{0.35\textwidth}
		\includegraphics[width=\textwidth]{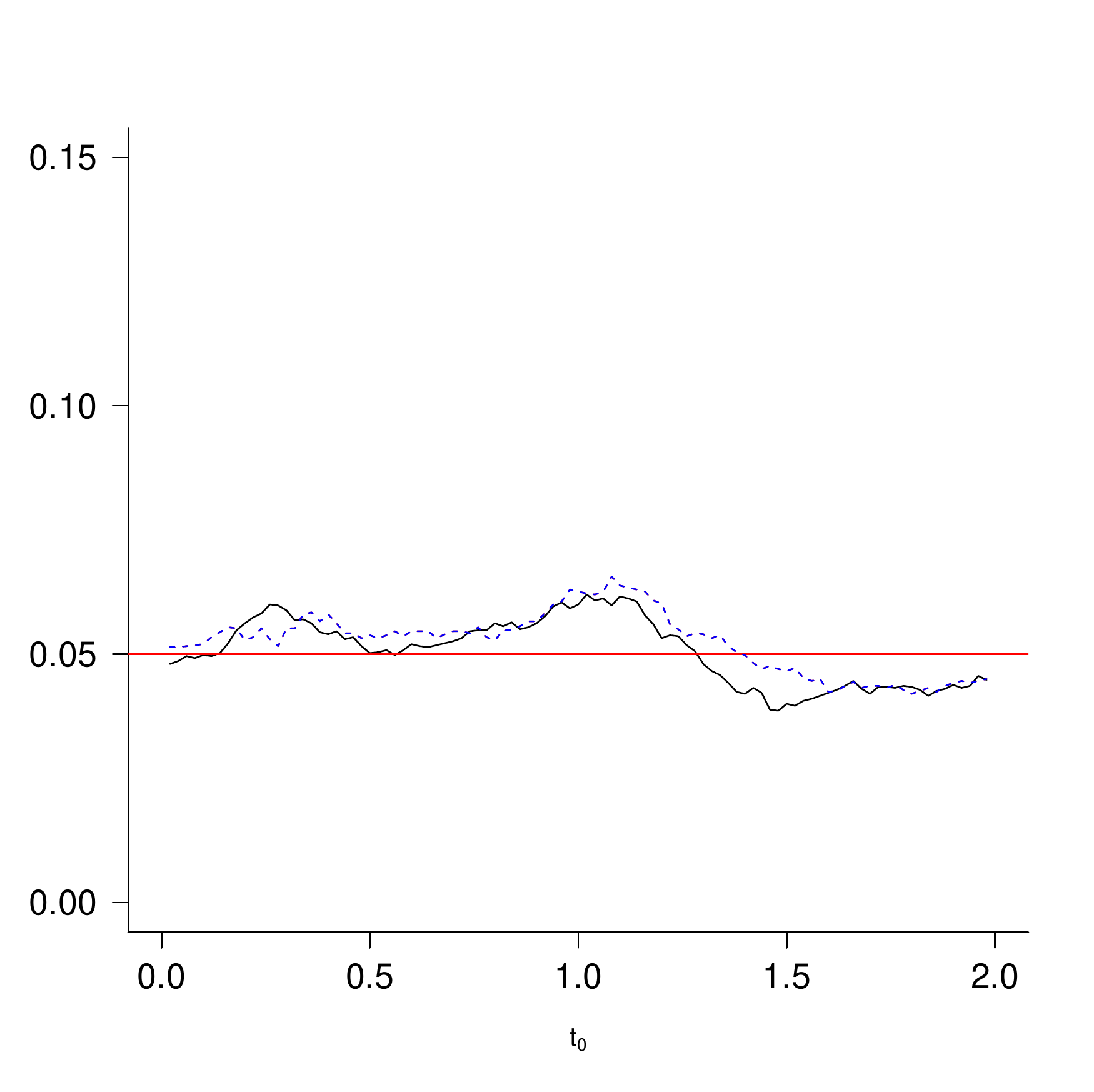}
		\caption{}
	\end{subfigure}
	\hspace{1cm}
	\begin{subfigure}[b]{0.35\textwidth}
		\includegraphics[width=\textwidth]{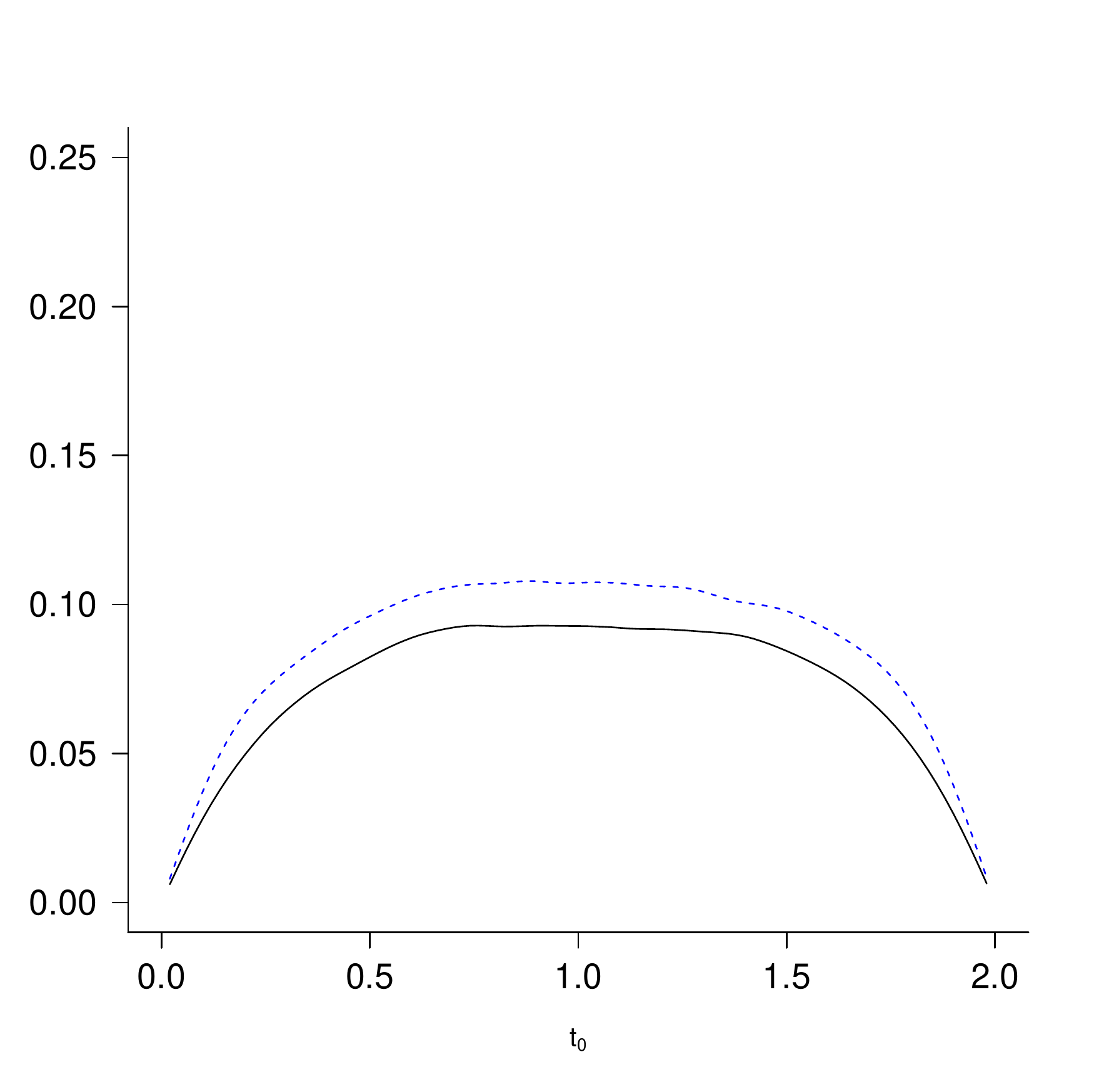}
		\caption{}
	\end{subfigure}
	\caption{Uniform samples: (a) Proportion of times that $F_0(t_i),\, t_i=0.02,0.04,\dots$ is not in the $95\%$ CI's and (b) average length of the CIs in $5000$ samples using $1000$ bootstrap samples with the Epanechnikov kernel (black,solid) and the triweight kernel (blue, dashed) for the Studentized SMLE-based CIs (\ref{CI_type2}). $h=2n^{-1/5}$.}
	\label{fig:uniform_kernels}
\end{figure}

A picture of the proportion of times that $F_0(t_i), t_i = 0.02,0.04,\ldots,2$ is not in the 95\% CIs for $n=1000$ is shown in Figure \ref{fig:uniform1}(a-c) for the uniform model. The average length of our CIs based on the SMLE (both classical CIs (\ref{CI_type1}) (result not shown) and Studentized CIs (\ref{CI_type2})) remains smaller than the average lengths of the Banerjee-Wellner and Sen-Xu CIs based on the MLE for all points $t$, as is shown in Figure \ref{fig:uniform1}(d). For the uniform samples our SMLE-based method does not suffer from bias effects; the coverage of the different intervals is comparable for time points in the middle of the interval [0,2], but becomes rather bad at the boundary of the interval for the Banerjee-Wellner and Sen-Xu intervals. Figure \ref{fig:uniform1} is obtained with the results for the Epanechnikov kernel. Similar comparisons were obtained when the triweight kernel was used. 
Figure \ref{fig:uniform1}(a) also shows that the classical SMLE-based CIs (\ref{CI_type1}) are slightly anti-conservative near the left boundary of the interval and have a coverage that is less good than the Studentized CIs (\ref{CI_type2}). Similar conclusions are also observed for the exponential samples. 
\begin{figure}[!ht]
	\centering
	\begin{subfigure}[b]{0.35\textwidth}
		\includegraphics[width=\textwidth]{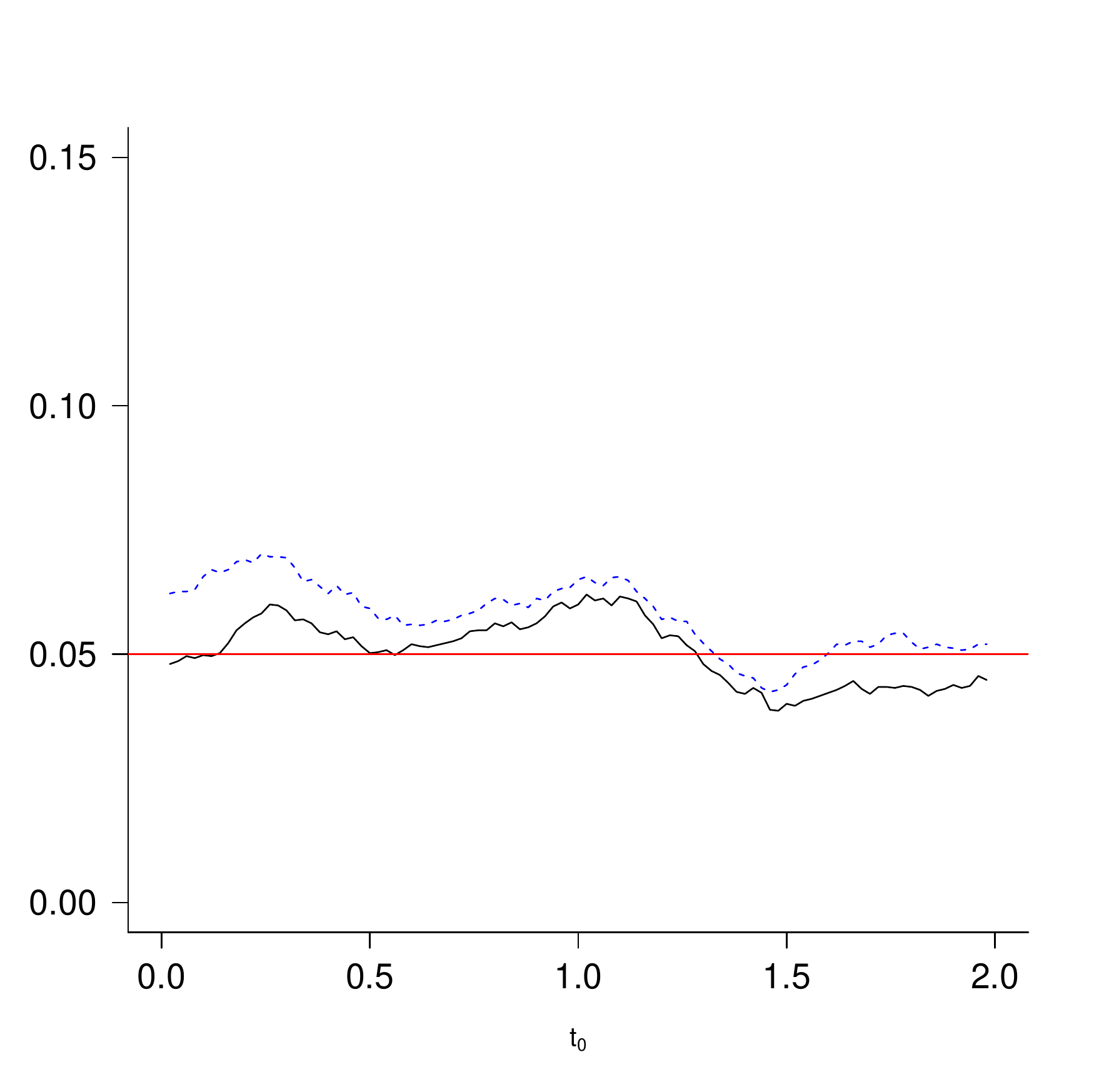}
		\caption{}
	\end{subfigure}
	\hspace{1cm}
	\begin{subfigure}[b]{0.35\textwidth}
		\includegraphics[width=\textwidth]{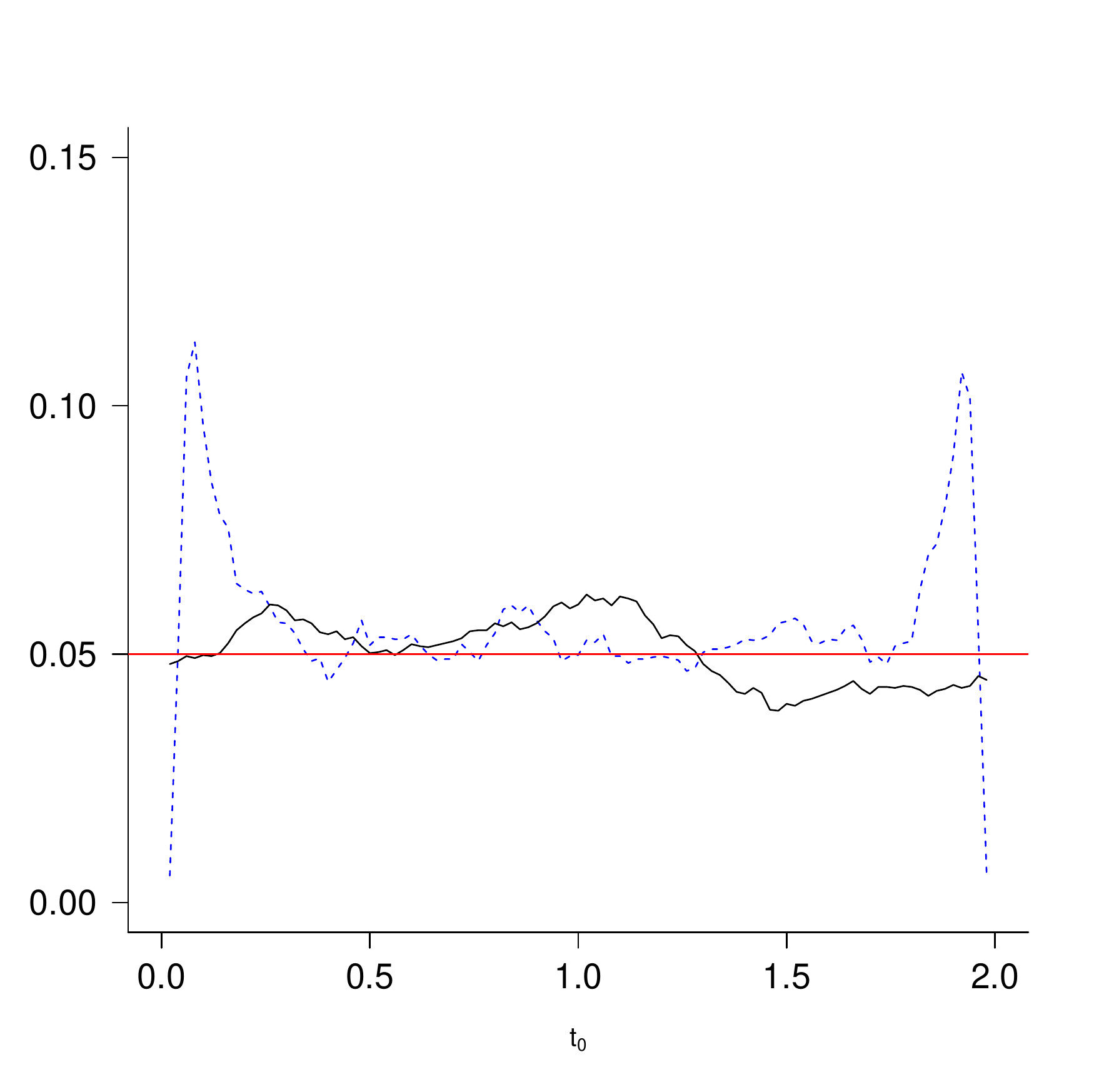}
		\caption{}
	\end{subfigure}	\\
	\begin{subfigure}[b]{0.35\textwidth}
		\includegraphics[width=\textwidth]{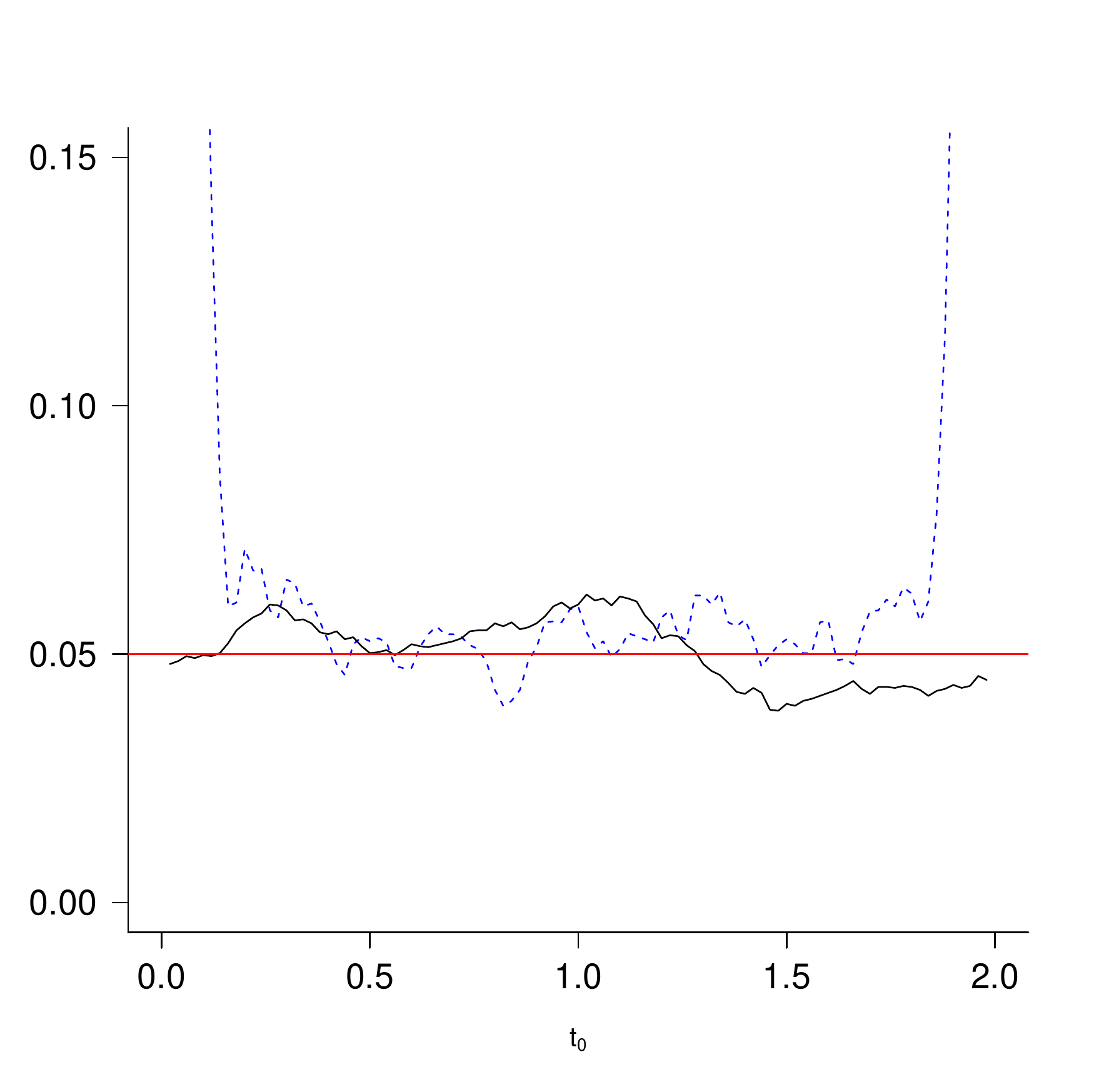}
		\caption{}
	\end{subfigure}
	\hspace{1cm}
	\begin{subfigure}[b]{0.35\textwidth}
		\includegraphics[width=\textwidth]{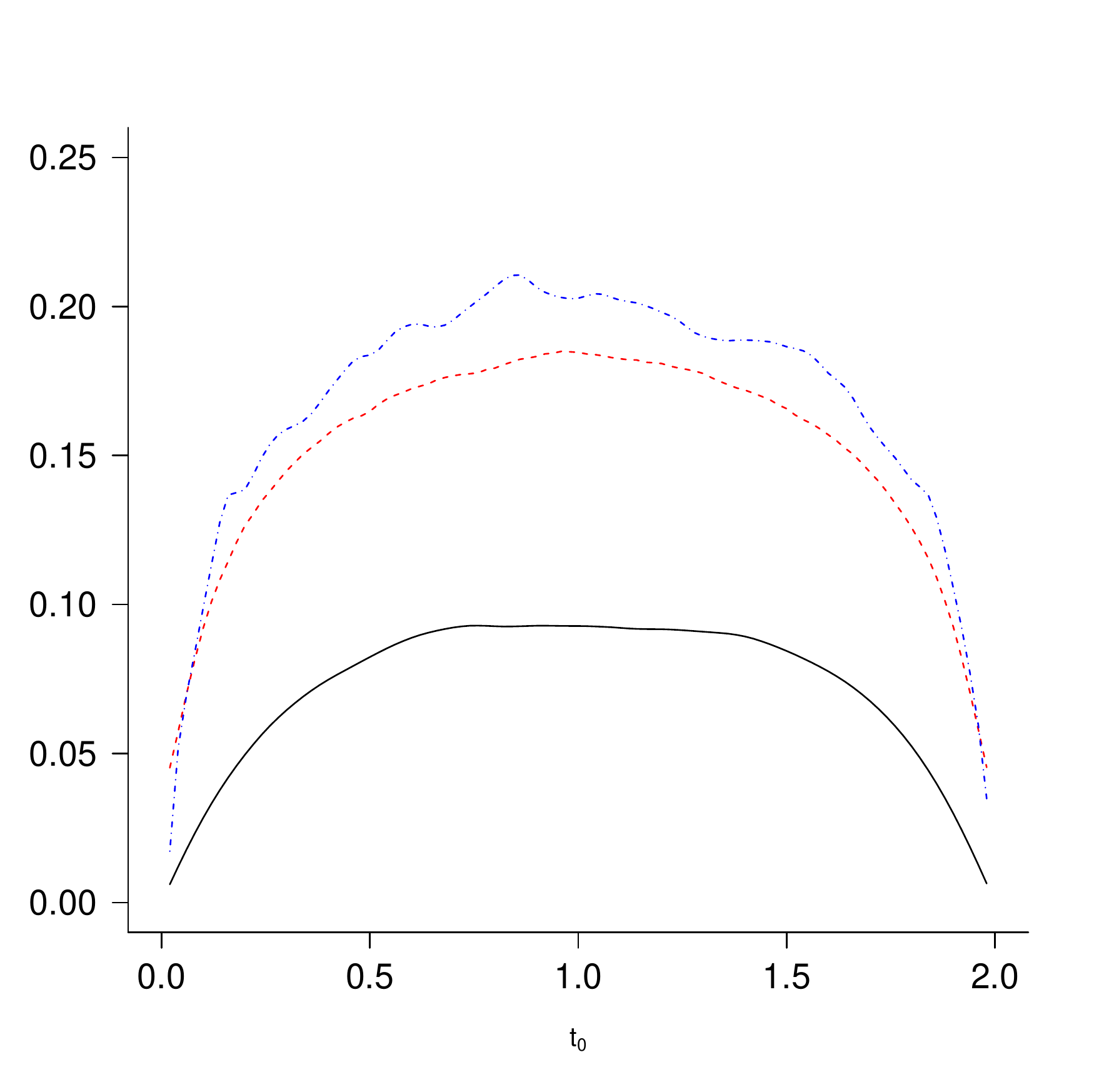}
		\caption{}
	\end{subfigure}	\\
	\caption{Uniform samples: Proportion of times that $F_0(t_i),\, t_i=0.02,0.04,\dots$ is not in the $95\%$ CI's in $5000$ samples using the Epanechnikov kernel and $1000$ bootstrap samples for (a) classical SMLE-based CIs (\ref{CI_type1}) (blue, dashed) and Studentized SMLE-based CIs (\ref{CI_type2}) (black, solid), (b) Banerjee-Wellner CIs (blue, dashed) and Studentized SMLE-based CIs (\ref{CI_type2}) (black, solid) and (c) Sen-Xu CIs (blue, dashed) and Studentized SMLE-based CIs (\ref{CI_type2}) (black, solid). (d) The average length for the SMLE-based CIs (\ref{CI_type2}) (black, solid), Banerjee-Wellner CIs (red, dashed) and Sen-Xu CIs (blue, dashed-dotted). $n=1000$ and $h=2n^{-1/5}$.}
	\label{fig:uniform1}
\end{figure}

In contrast to the MLE-based intervals, the SMLE-based intervals in the exponential setting are subjected to bias effects. A picture of the asymptotic bias $\b$ defined in (\ref{mu-sigma}) is shown in Figure \ref{fig:bias}, the function $\b = \b(t)$ is scaled by a factor $1000^{-2/5}$ and therefore its magnitude corresponds to the quantity that should be subtracted from the estimated SMLE-based CIs in order to construct unbiased confidence intervals based on $n=1000$ observations. Accurate procedures to handle the bias are hard to obtain and still need more investigation in further research. We propose to use a combination of a local bandwidth, minimizing an estimate of the Mean Squared Error together with undersmoothing in order to reduce the bias effects when constructing confidence intervals around the SMLE. Undersmoothing can be used to correct for bias when the bootstrap is used to construct confidence intervals. As argued by \cite{hall:92}, undersmoothing has the advantage that direct estimation of the bias is no longer necessary and can improve coverage accuracy of the CIs as well as result in narrower intervals. An improvement of the performance of bootstrap-based CIs around the SMLE as a consequence of undersmoothing is also observed in \cite{piet_geurt:14}, Section 9.5. (see e.g. Figure 9.19 on p.272).
\begin{figure}[!ht]
	\centering
	\includegraphics[width=0.4\textwidth]{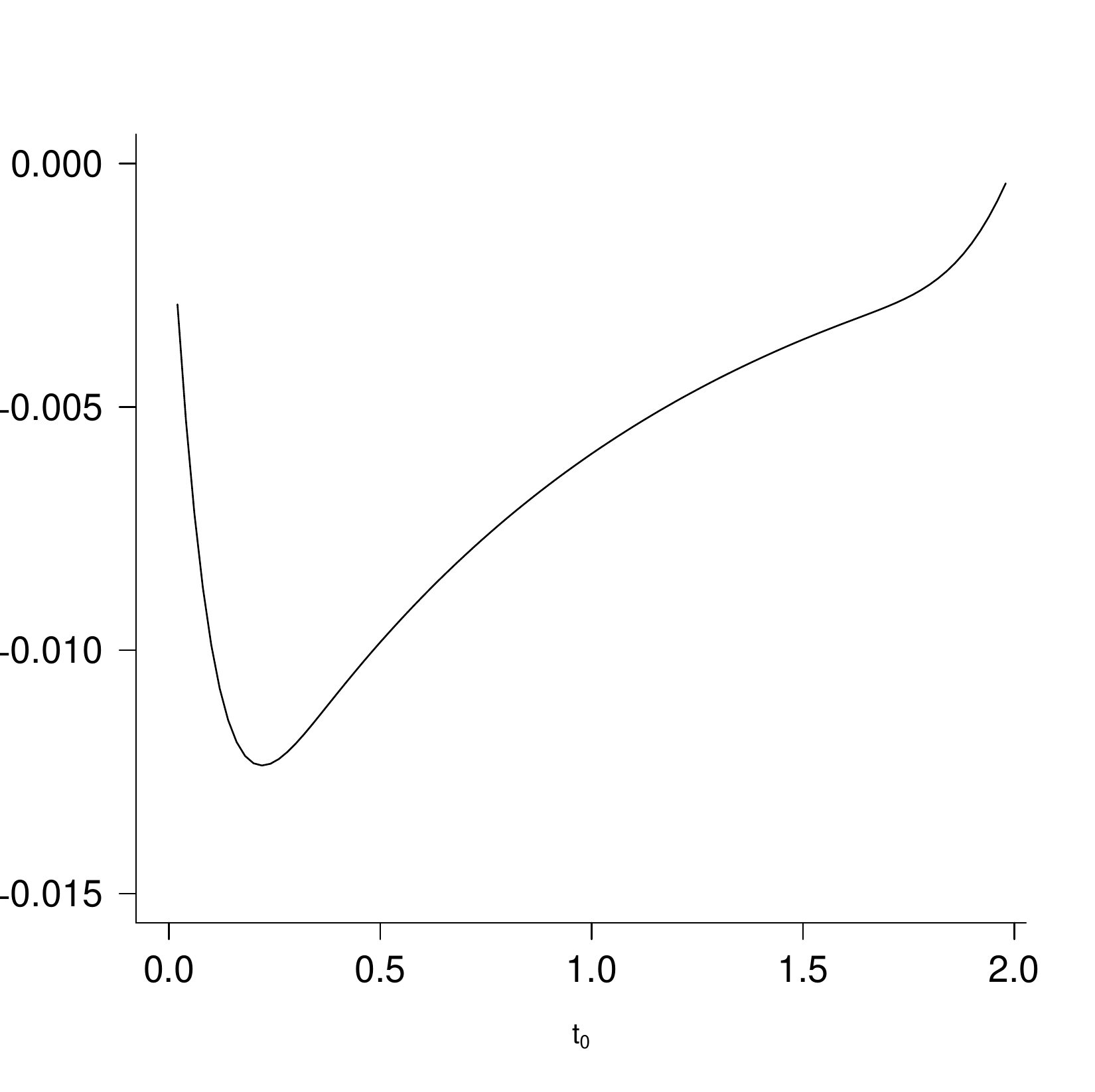}
	\caption{Truncated exponential samples: Bias for a sample of size $n=1000$, $h=2n^{-1/5}$. }
	\label{fig:bias}
\end{figure}

\subsection{Bandwidth selection}
We use a bootstrap procedure to select the optimal local bandwidth at time point $t$. The selection criterion is based on minimizing the Mean Squared Error (MSE)
\begin{align}
\label{mse}
{\rm MSE}(h) = E\{\tilde F_{nh}(t)-F_0(t)\}^2.
\end{align}
Since $F_0$ is unknown in practice, we select, for each time point $t$, the constant $\hat c_{t,opt}$ which minimizes 
\begin{align}
\label{mse-hat}
\widehat{{\rm MSE}}(c) =B^{-1}\sum_{b=1}^{B}\{\tilde F_{m,cm^{-1/5}}^{b}(t) - \tilde F_{n\tilde h_0}(t) \}^2
\end{align}
where $\tilde F_{m,cm^{-1/5}}^{b}$ is the SMLE in a bootstrap sample $(T_1^*,\dd_1^*),\dots,(T_m^*,\dd_m^*)$ of size $m < n$, where the $T_i^*$ are sampled from a kernel estimator for the distribution function $G$ of the censoring variable $T$ and where the $\dd_i^*$ are sampled from a Bernoulli distribution with probability $\tilde F_{nh_0}(T_i^*)$. Here $\tilde F_{nh_0}$ denotes the SMLE in the original sample (of size $n$) using the bandwidth $h_0 = c_0n^{-1/5}$ for some constant $c_0$ and $B$ equals the number of bootstrap samples. A similar procedure to select the constant $c$ when interest is in point estimation of $F_0(t)$ is proposed in \cite{piet_geurt_birgit:10}. For each time point $t$, we next choose the bandwidth
\begin{align*}
\hat h_{t,opt} = \hat c_{t,opt} n^{-1/4},
\end{align*}
where we use undersmoothing to reduce the bias effect in constructing CIs for $F_0(t)$.

An important point is the fact that we have to use subsampling, i.e. bootstrapping with a smaller sample size, for estimating the right bandwidth in a reasonable fashion, as argued convincingly in \cite{hall:90}. In the present case, we took $m=100$. If one does not use subsampling, the bias/variance comparison is not done in the right way, whereas our present scheme, taking $m=100$ versus the original sample size $n=1000$, seemed to give a reasonable estimate of the MSE, as was borne out by a comparison with the real MSE.
We estimated $MSE(c)$ on a grid $c =$ 0.05, 0.10,$\ldots$, 5, for a sample of size $n=1000$ by a Monte Carlo experiment with $N = 1000$ simulation runs by
\begin{align}
\label{Monte_carlo_MSE}
\widetilde{\text{\rm MSE}}(c) =N^{-1}\sum_{j=1}^N ( \tilde F_{n,cn^{-1/5}}^{j}(t) - F_0(t))^2,
\end{align}
where $\tilde F_{n,cn^{-1/5}}^{j}(t)$ is the estimate of $F_0(t)$ in the $j$th simulation run, $j= 1,\ldots,N$.  Figure \ref{fig:6}(a) compares the values of $c$ minimizing the Monte-Carlo estimate of MSE (\ref{Monte_carlo_MSE}) with the values of $c$ minimizing the bootstrap MSE (\ref{mse-hat}) as a function of $t$, and illustrates that the bootstrap MSE is a good estimate of (\ref{mse}). 

Figure \ref{fig:6}(b) compares the proportion of times that $F_0(t_i),\, t_i=0.02,0.04,\dots$ is not in the $95\%$ Studentized SMLE-based CI's (\ref{CI_type2}) for the truncated exponential model when a fixed bandwidth $h=2n^{-1/5}$ is used with the proportion obtained when a local bandwidth is used. We use the bandwidth $h(t) = (0.3412 + 0.1280t)n^{-1/4}$ which corresponds to the least squares regression line through the points $(t, cn^{-1/4})$ where $c$ is the value minimizing (\ref{Monte_carlo_MSE}) at timepoint $t$. 
An improvement in the coverage probabilities of the CIs is seen at the left end (i.e. the region where the bias is most prominent), indicating that it is indeed possible to obtain good CIs if undersmoothing in combination with a local optimal bandwidth is considered. The coverage proportions for the MLE-based methods of \cite{banerjee_wellner:2005} and \cite{SenXu2015} (results not shown) are similar to the proportions obtained for the uniform samples. Under our regularity conditions, our SMLE-based CIs have a better behavior than the MLE-based intervals near the boundary of the intervals in terms of coverage proportions and in the middle of the interval in terms of the length of the intervals. 

\begin{figure}[!ht]
	\centering
	\begin{subfigure}[b]{0.35\textwidth}
		\includegraphics[width=\textwidth]{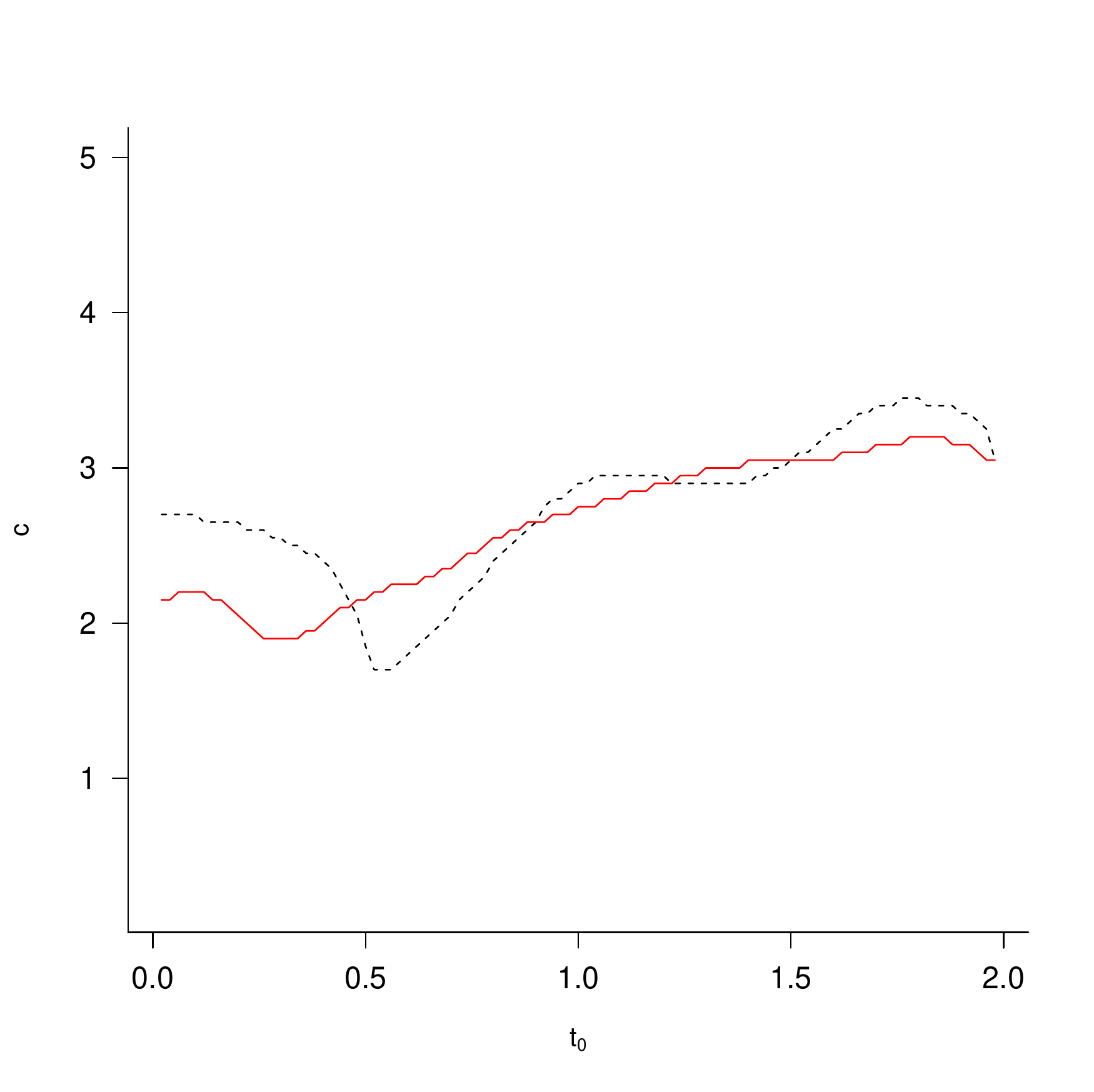}
		\caption{}
	\end{subfigure}
	\hspace{1cm}
	\begin{subfigure}[b]{0.35\textwidth}
		\includegraphics[width=\textwidth]{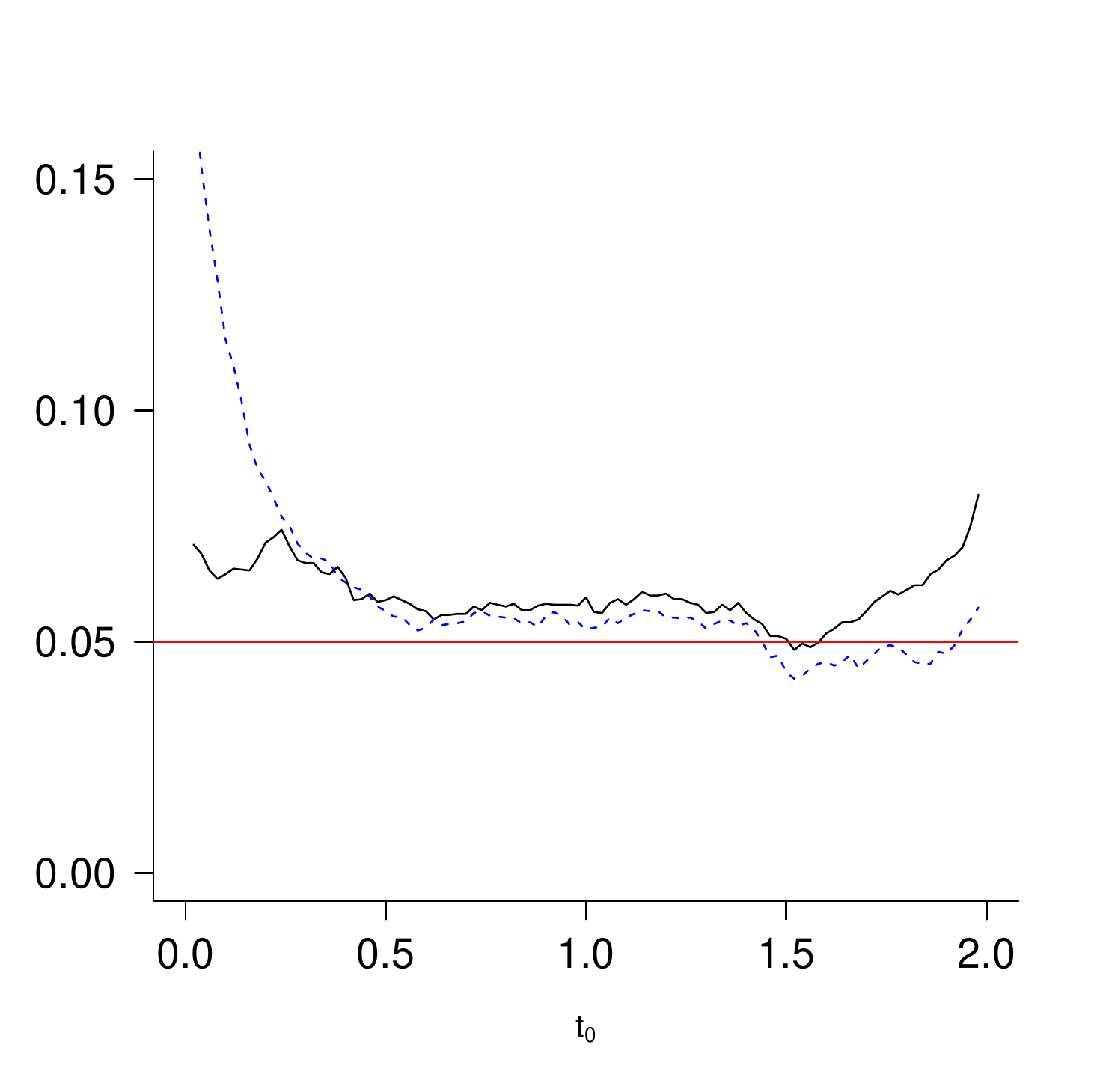}
		\caption{}
	\end{subfigure}	
	\caption{Truncated exponential samples: (a) optimal bandwidth $h$ at timepoint $t_0=0.02,0.04,\ldots,2$ obtained by the minimizer of $\widetilde{{\rm MSE}}$ (red, solid) using $N=1,000$ Monte-Carlo runs and $\widehat{{\rm MSE}}$ (black, dashed) using $B=1000$ bootstrap runs of size $m=100$ and $h_0=2n^{-1/5}$. (b)  Proportion of times that $F_0(t_i),\, t_i=0.02,0.04,\dots$ is not in the $95\%$ Studentized SMLE-based CI's (\ref{CI_type2}) in $5000$ samples using $1000$ bootstrap samples with bandwidth $h=2n^{-1/5}$ (blue, dashed) and $h(t) = (0.3412 + 0.1280t)n^{-1/4}$ (black, solid); $n=1000$.}
	\label{fig:6}
\end{figure}

The CIs for one sample  of size $n=1000$ are shown in Figure \ref{fig:uniform_ci} and Figure \ref{fig:exponential_ci}. Note that the Sen-Xu CIs do not have monotone bounds. One may wonder if one really wants to use the MLE for estimating the distribution function, if one resamples from the SMLE as in \cite{SenXu2015} since one uses smoothness conditions that allow to estimate the distribution function at a faster rate than the convergence rate of the MLE. The pointwise CIs around the SMLE change smoothly over the interval whereas MLE-based intervals change in discrete steps.

\begin{figure}[!ht]
	\centering
	\begin{subfigure}[b]{0.3\textwidth}
		\includegraphics[width=\textwidth]{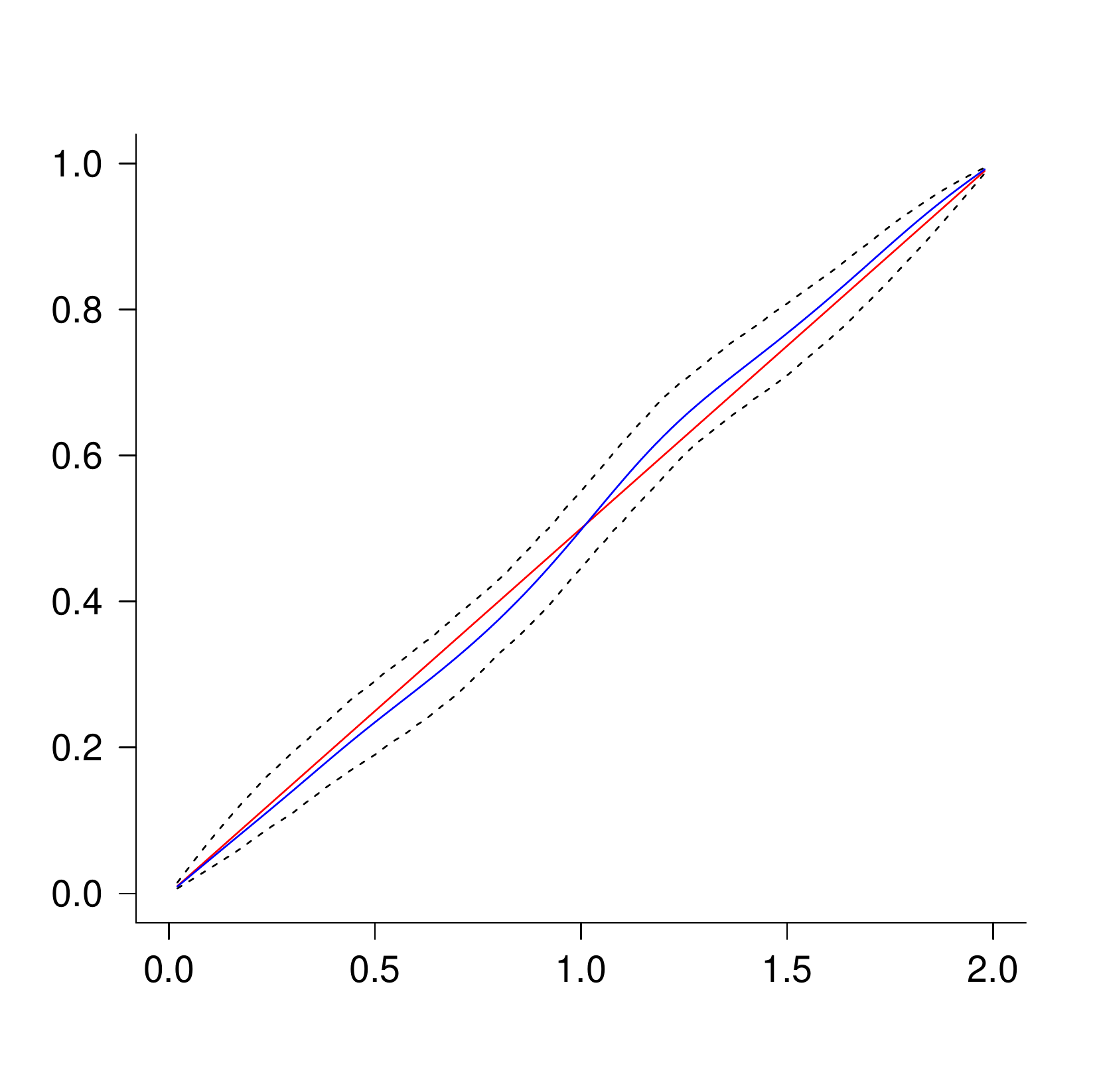}
		\caption{}
	\end{subfigure}
	\begin{subfigure}[b]{0.3\textwidth}
		\includegraphics[width=\textwidth]{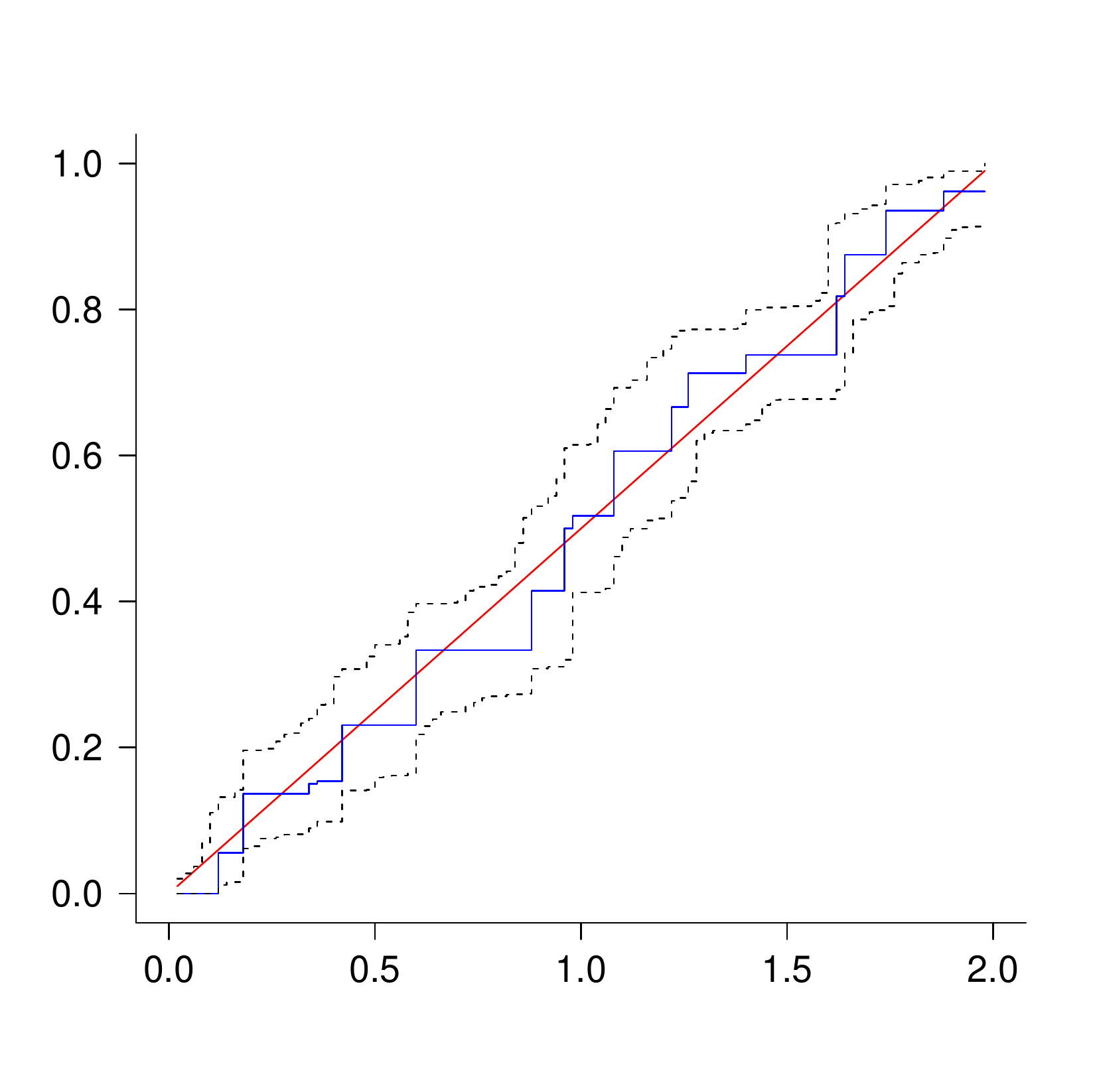}
		\caption{}
	\end{subfigure}
	\begin{subfigure}[b]{0.3\textwidth}
		\includegraphics[width=\textwidth]{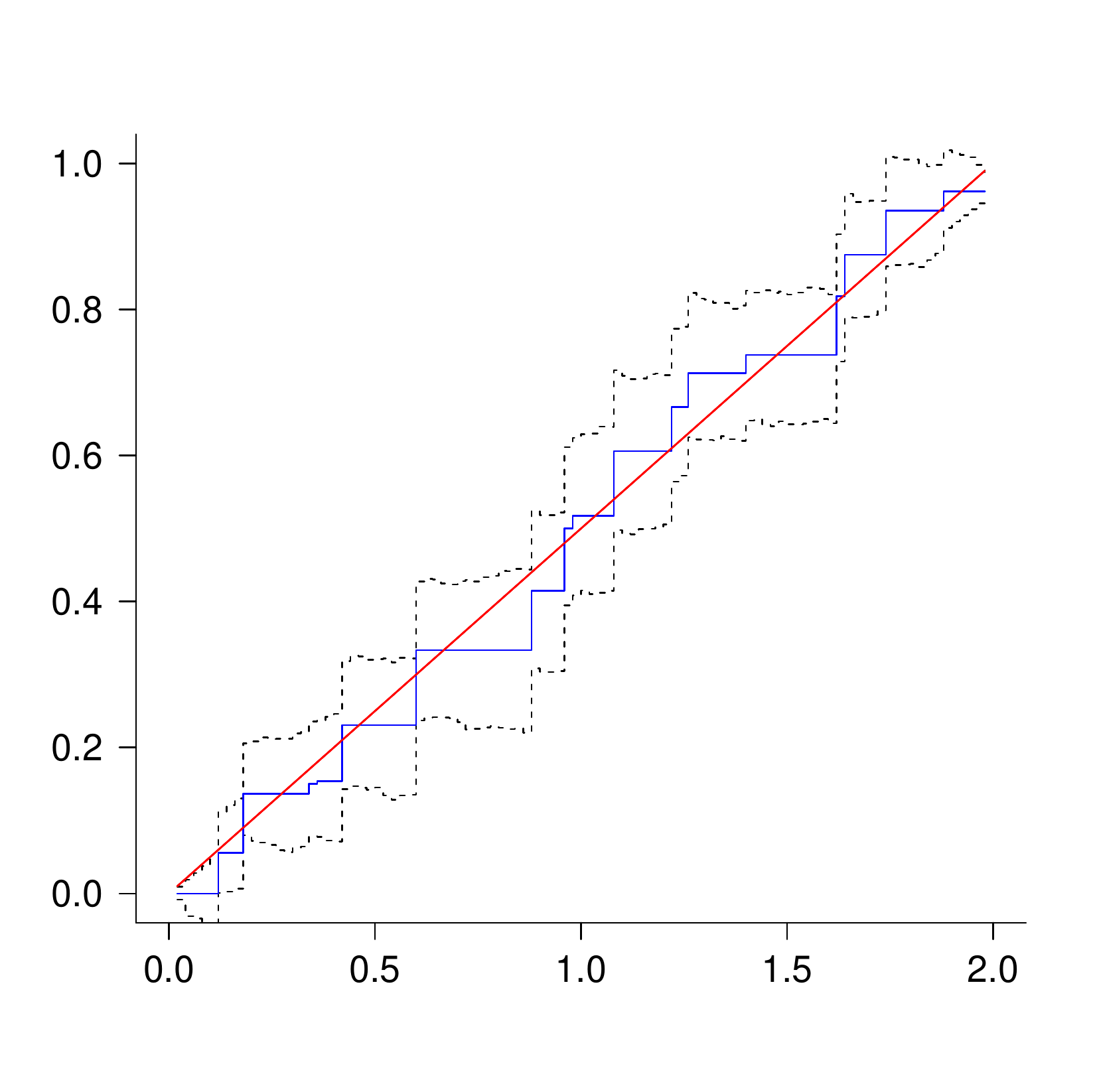}
		\caption{}
	\end{subfigure}
	\caption{Uniform samples: $F_0$ (red solid). (a) Studentized SMLE-based CI (\ref{CI_type2}), (b) Banerjee-Wellner CI and (c) Sen-Xu CI based on one sample of size $n=1000$ using $1000$ bootstrap samples. In (a) the SMLE (blue, solid) is given and in (b,c) the MLE (blue, step function) is given; $h=2n^{-1/5}$.  }
	\label{fig:uniform_ci}
\end{figure}

\begin{figure}[!ht]
	\centering
	\begin{subfigure}[b]{0.3\textwidth}
		\includegraphics[width=\textwidth]{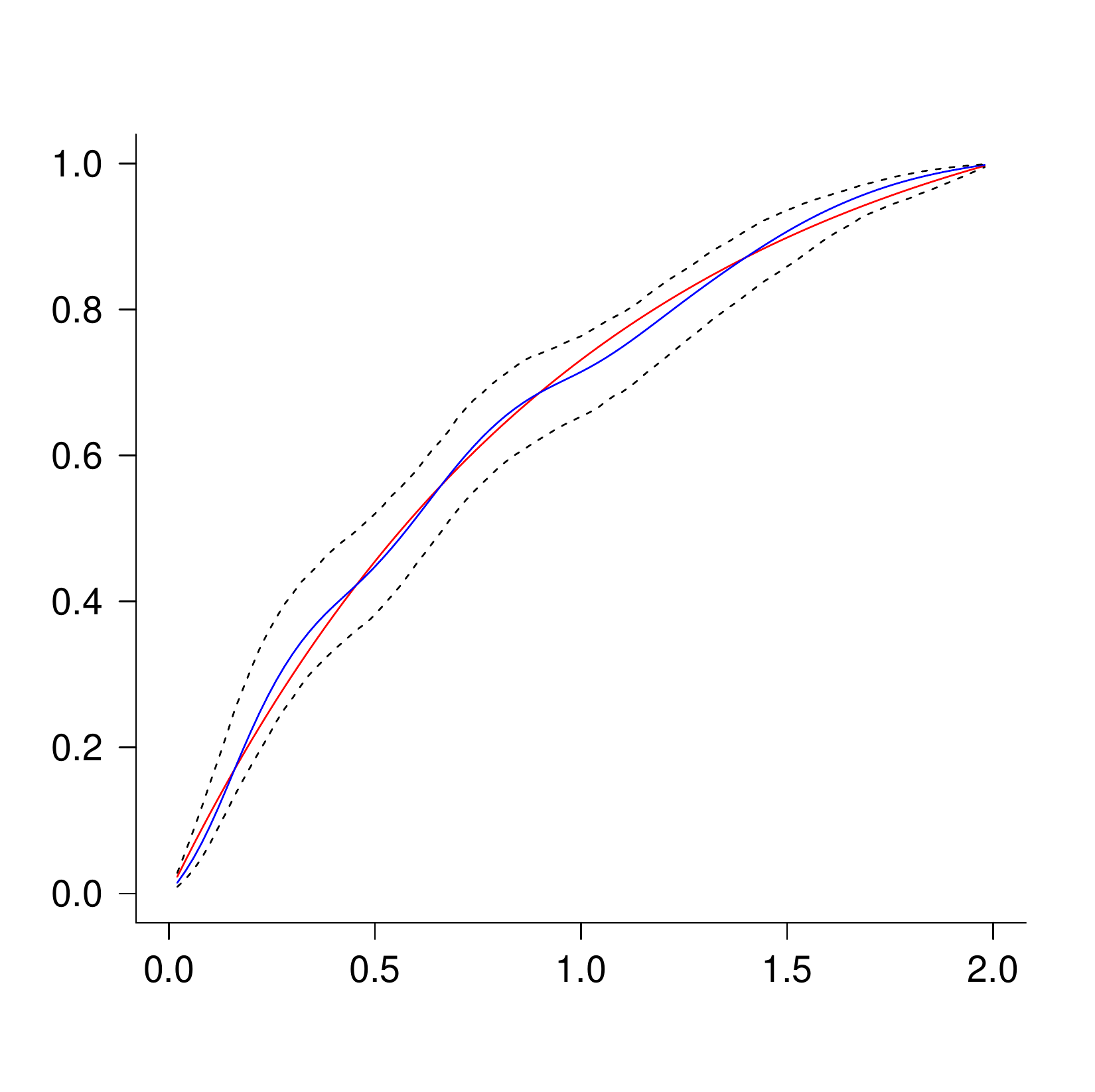}
		\caption{}
	\end{subfigure}
	\begin{subfigure}[b]{0.3\textwidth}
		\includegraphics[width=\textwidth]{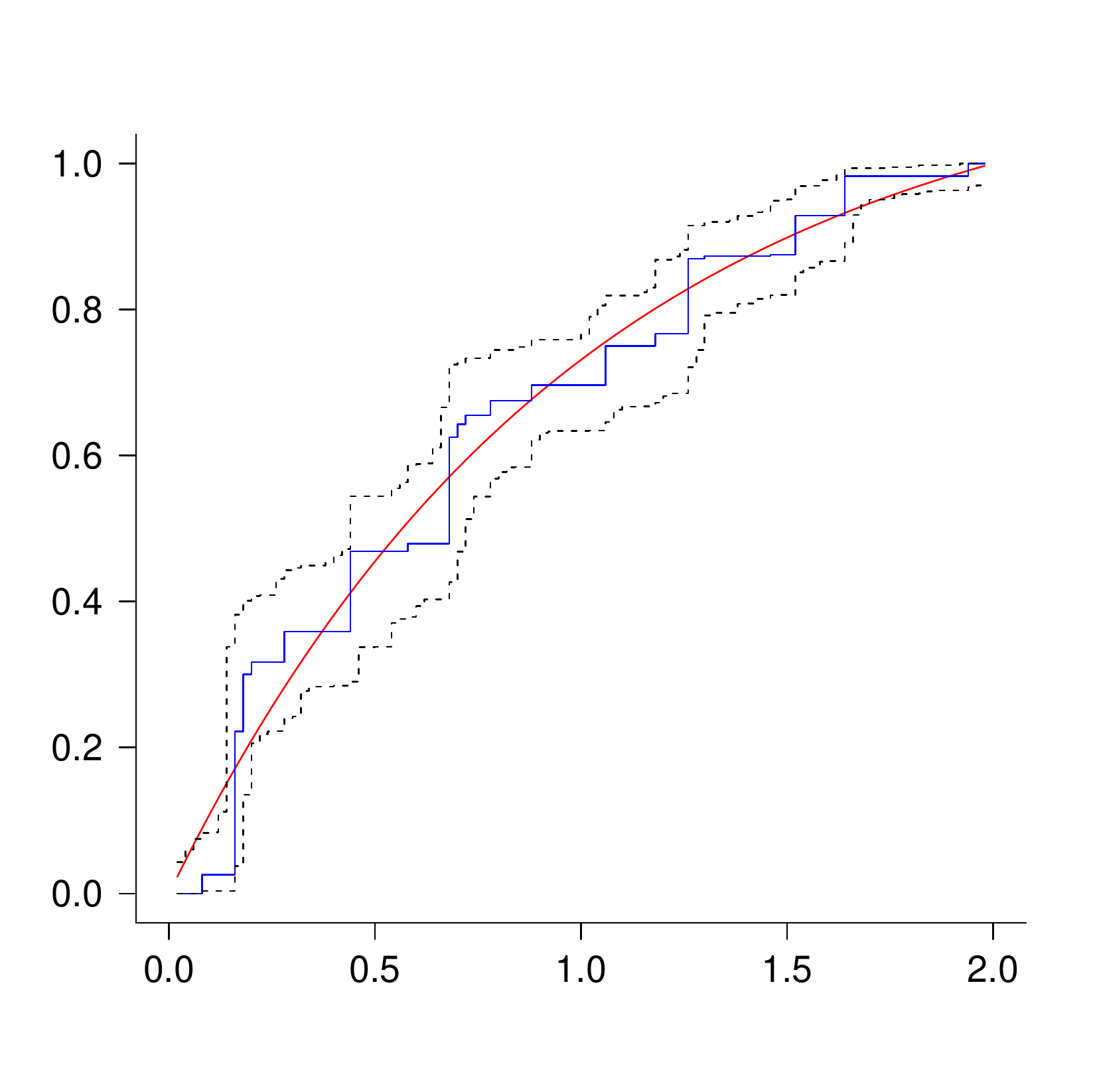}
		\caption{}
	\end{subfigure}
	\begin{subfigure}[b]{0.3\textwidth}
		\includegraphics[width=\textwidth]{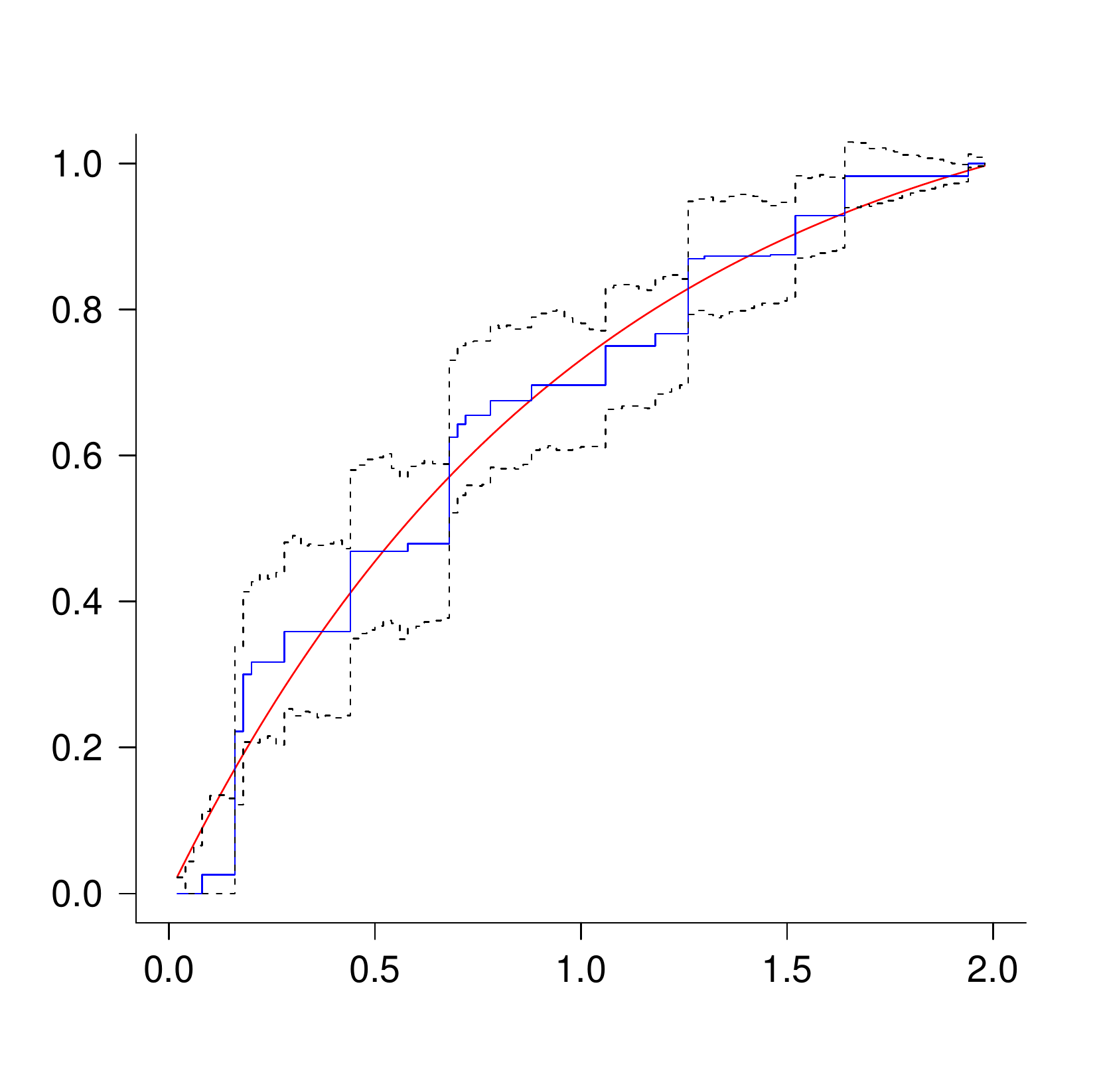}
		\caption{}
	\end{subfigure}
	\caption{Truncated exponential samples: $F_0$ (red solid). (a) Studentized SMLE-based CI (\ref{CI_type2}), (b) Banerjee-Wellner CI and (c) Sen-Xu CI based on one sample of size $n=1000$ using $1000$ bootstrap samples. In (a) the SMLE (blue, solid) is given and in (b,c) the MLE (blue, step function) is given. $h(t)=(0.3412 + 0.1280t)n^{-1/4}$ for SMLE-based CI and $h=2n^{-1/5}$ for Sen-Xu CI.  }
	\label{fig:exponential_ci}
\end{figure}

\section{Real data analysis}
\label{section:realdata}
\subsection{Hepatitis A}
\cite{keiding:91} considered a cross-sectional study on the Hepatitis A virus from Bulgaria. In 1964 samples were collected from schoolchildren and blood donors on the presence or absence of Hepatitis A immunity. In total $n=850$ individuals ranging from 1 to 86 years old were tested for immunization. It is assumed that, once infected with Hepatitis A, lifelong immunity is achieved. We are interested in estimating the sero-prevalence for Hepatitis A in Bulgaria.  We constructed confidence intervals at timepoints $t_1 = M/100,t_2=2M/100,\ldots, M$ where $M=86$ is the largest observed age using the Studentized SMLE-based CIs (\ref{CI_type2}) described in Section \ref{section:pointwiseCI} using a local bandwidth $h(t_i) = (0.5M + 1.5t_i)n^{-1/5}$. A picture of the CIs together with the likelihood-ratio based CIs of \cite{banerjee_wellner:2005} and the CIs of \cite{SenXu2015} is given in Figure \ref{fig:hepA_ci}. The estimated prevalence of Hepatitis A at the age of 18 is 0.51, about half of the infections in Bulgaria happen during childhood. The length of the CIs is smallest for our SMLE-based CIs and largest for the Sen-Xu CIs. The latter CIs have left and right end points that are not monotone increasing in age, a property that is not shared by the other two CIs which have monotone increasing bounds. In contrast to the Banerjee-Wellner CIs, the bounds of our SMLE based CIs are not increasing by construction.

\begin{figure}[!ht]
	\centering
	\begin{subfigure}[b]{0.3\textwidth}
		\includegraphics[width=\textwidth]{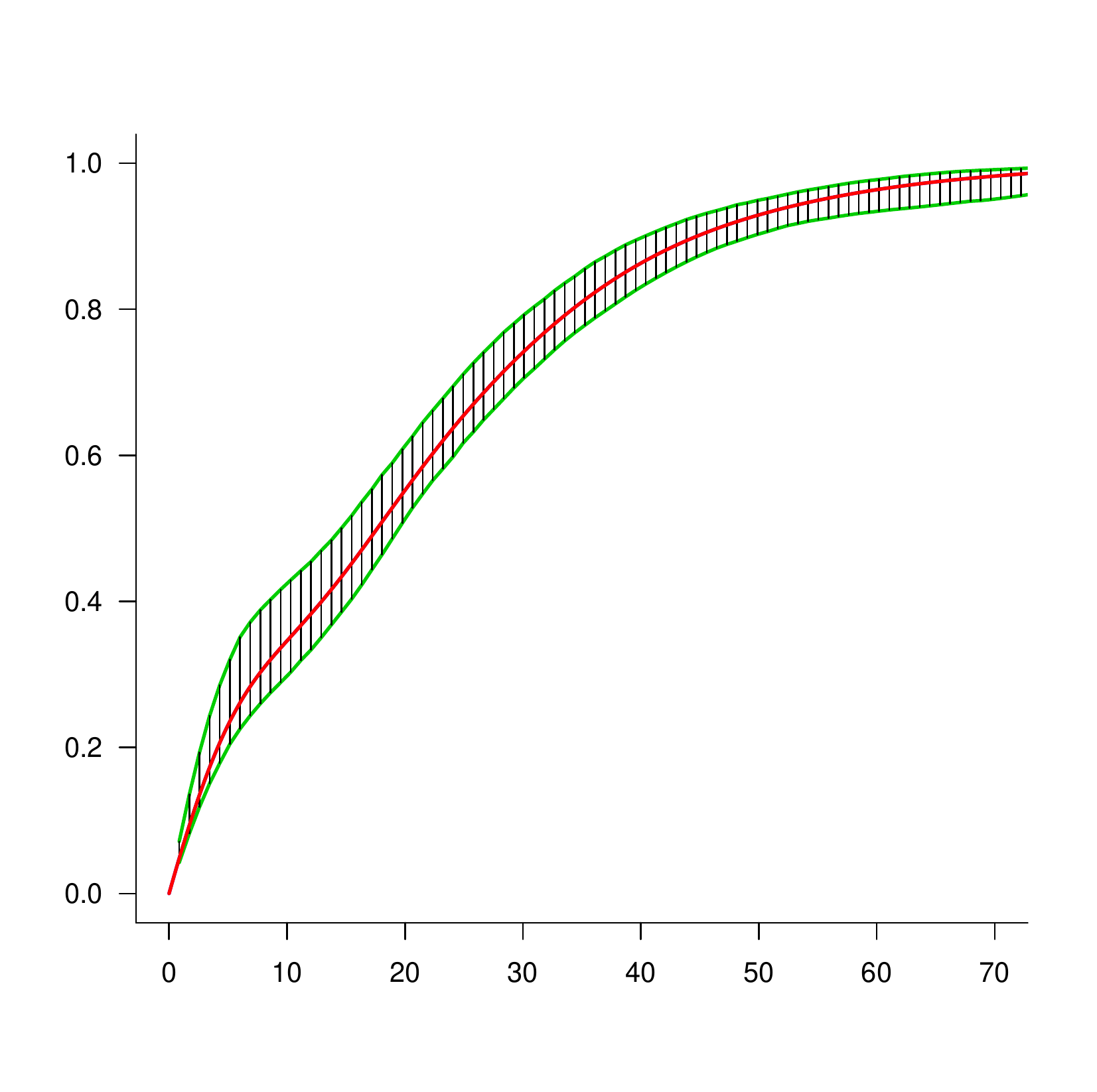}
		\caption{}
	\end{subfigure}
	\begin{subfigure}[b]{0.3\textwidth}
		\includegraphics[width=\textwidth]{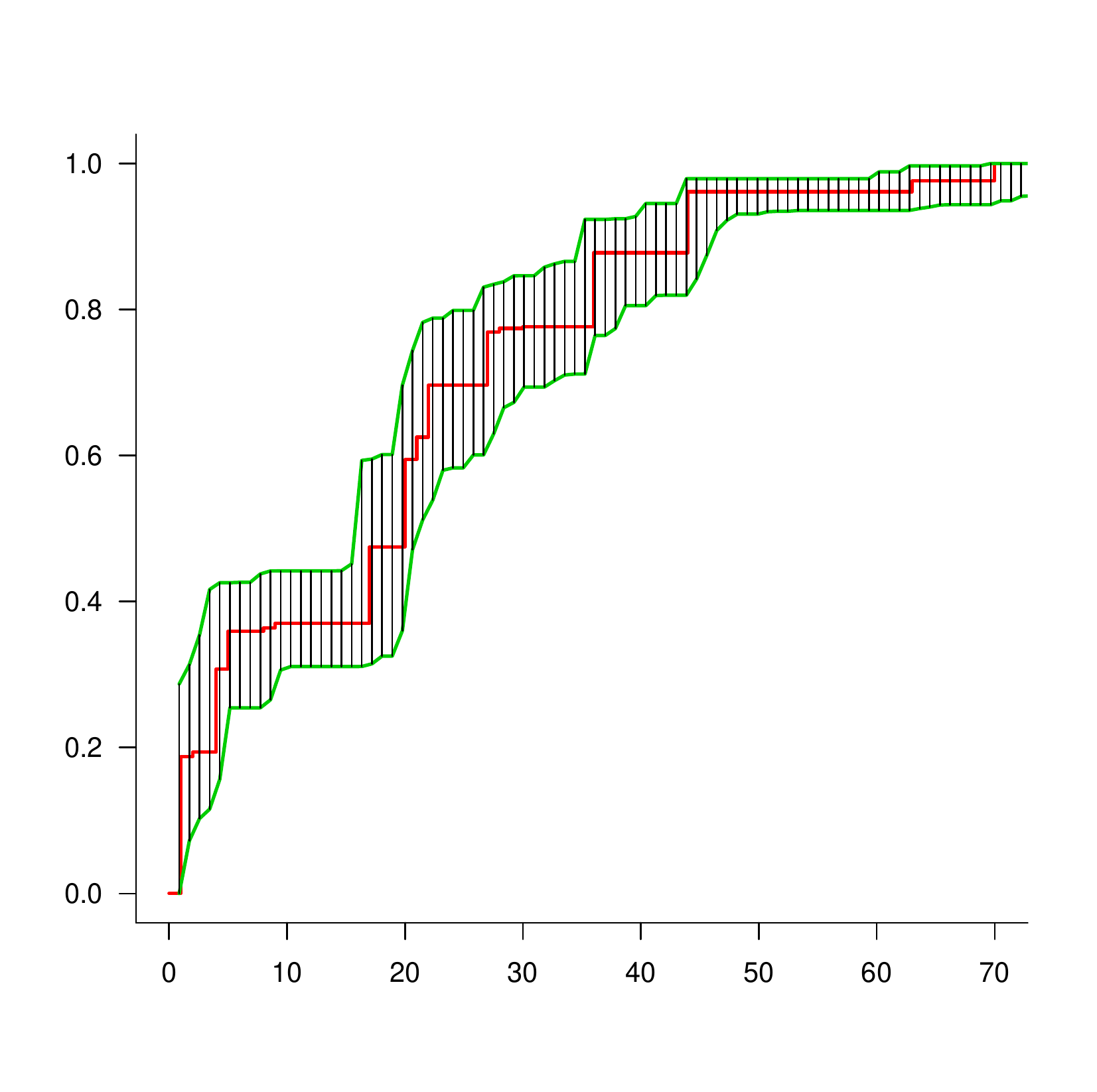}
		\caption{}
	\end{subfigure}
	\begin{subfigure}[b]{0.3\textwidth}
		\includegraphics[width=\textwidth]{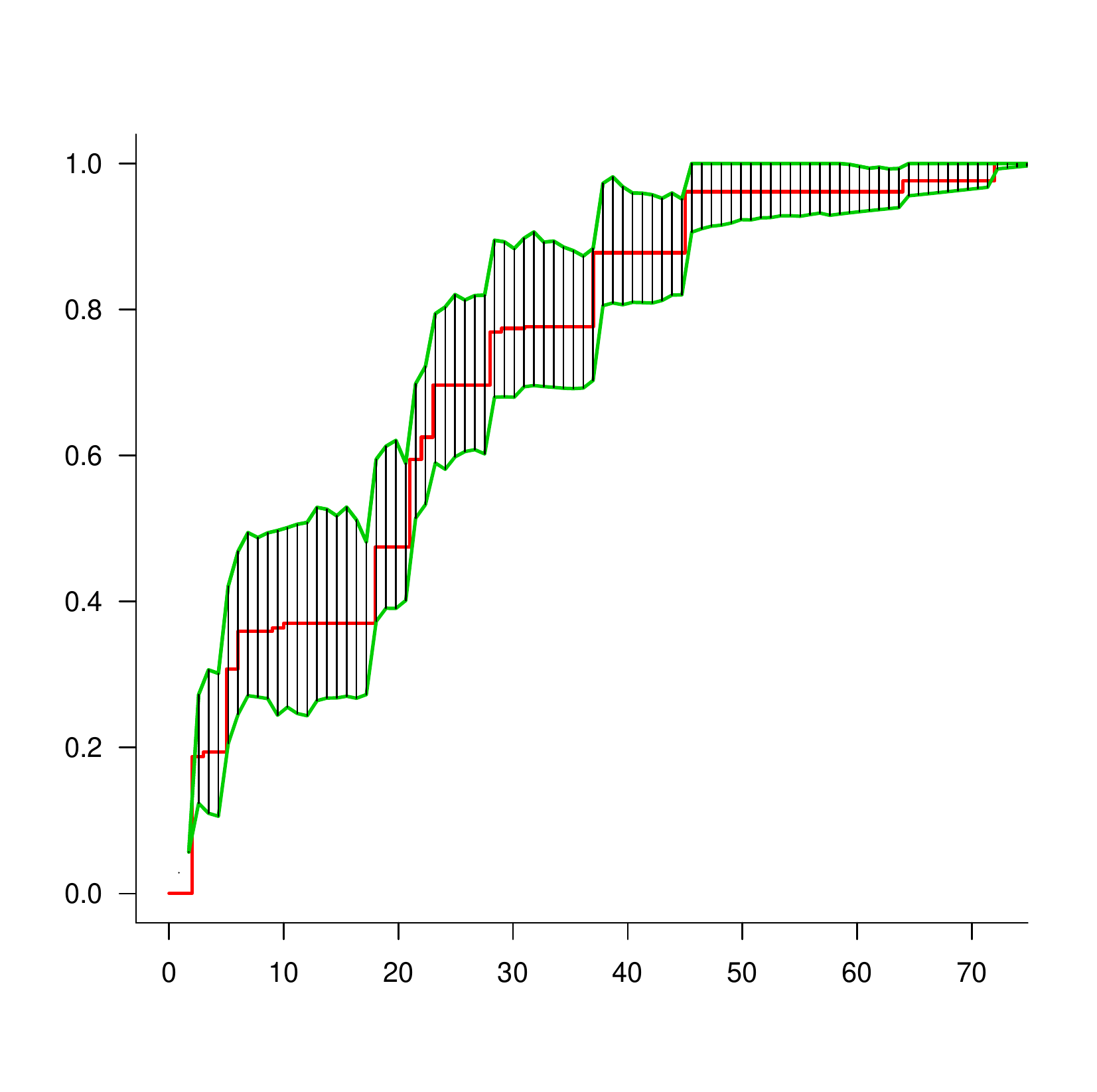}
		\caption{}
	\end{subfigure}
	\caption{Hepatitis A data: (a) Studentized SMLE-based CI (\ref{CI_type2}), (b) Banerjee-Wellner CI and (c) Sen-Xu CI based on $n=850$ observations using $1000$ bootstrap samples. In (a) the SMLE (red, solid) is given and in (b,c) the MLE (red, step function) is given. $h(t)=(43+1.5t)n^{-1/5}$ for SMLE-based CI and $h=86n^{-1/5}$ for Sen-Xu CI.  }
	\label{fig:hepA_ci}
\end{figure}

\subsection{Rubella}
\cite{Keiding:96} considered a current status data set on the prevalence of rubella in 230 Austrian males older than three months. Rubella is a highly contagious childhood disease spread by airborne and droplet transmission. The symptoms (such as rash, sore throat, mild fever and swollen glands) are less severe in children than in adults. Since the Austrian vaccination policy against rubella only vaccinated girls, the male individuals included in the dataset represent an unvaccinated population and (lifelong) immunity could only be acquired if the individual got the disease. We are interested in estimating the time to immunization (i.e. the time to infection) against rubella using the SMLE. We constructed CIs  at timepoints $t_1 = M/100,t_2=2M/100,\ldots, M$ where $M=80.1178$ is the largest observed age, using CIs defined in (\ref{CI_type2}) with the boundary correction described in Section \ref{section:pointwiseCI} and a local bandwidth $h(t_i) = (0.25M + t_i)n^{-1/5}$ if $t_i\leq 20$ and $h(t_i) =h(20) + 2(t_{i}-20)n^{-1/5}$ else. The bandwidth choice is based on the fact that most infections occurred before the age of 20 years and a larger bandwidth is needed in the range [20,M] to obtain plausible estimates. The SMLE increases steeply in the ages before adulthood which is in line with the fact that rubella is considered as a childhood disease. As can be seen from Figure \ref{fig:Rubella_ci}, our CIs and the Banerjee-Wellner CIs are favored over the Sen-Xu intervals due to their remarkable non-increasing behavior and their large width in the region up to 20 years. A further discussion of statistical aspects of this data set can be found in \cite{banerjee_wellner:2005} and \cite{piet_geurt:14}.

\begin{figure}[!ht]
	\centering
	\begin{subfigure}[b]{0.3\textwidth}
		\includegraphics[width=\textwidth]{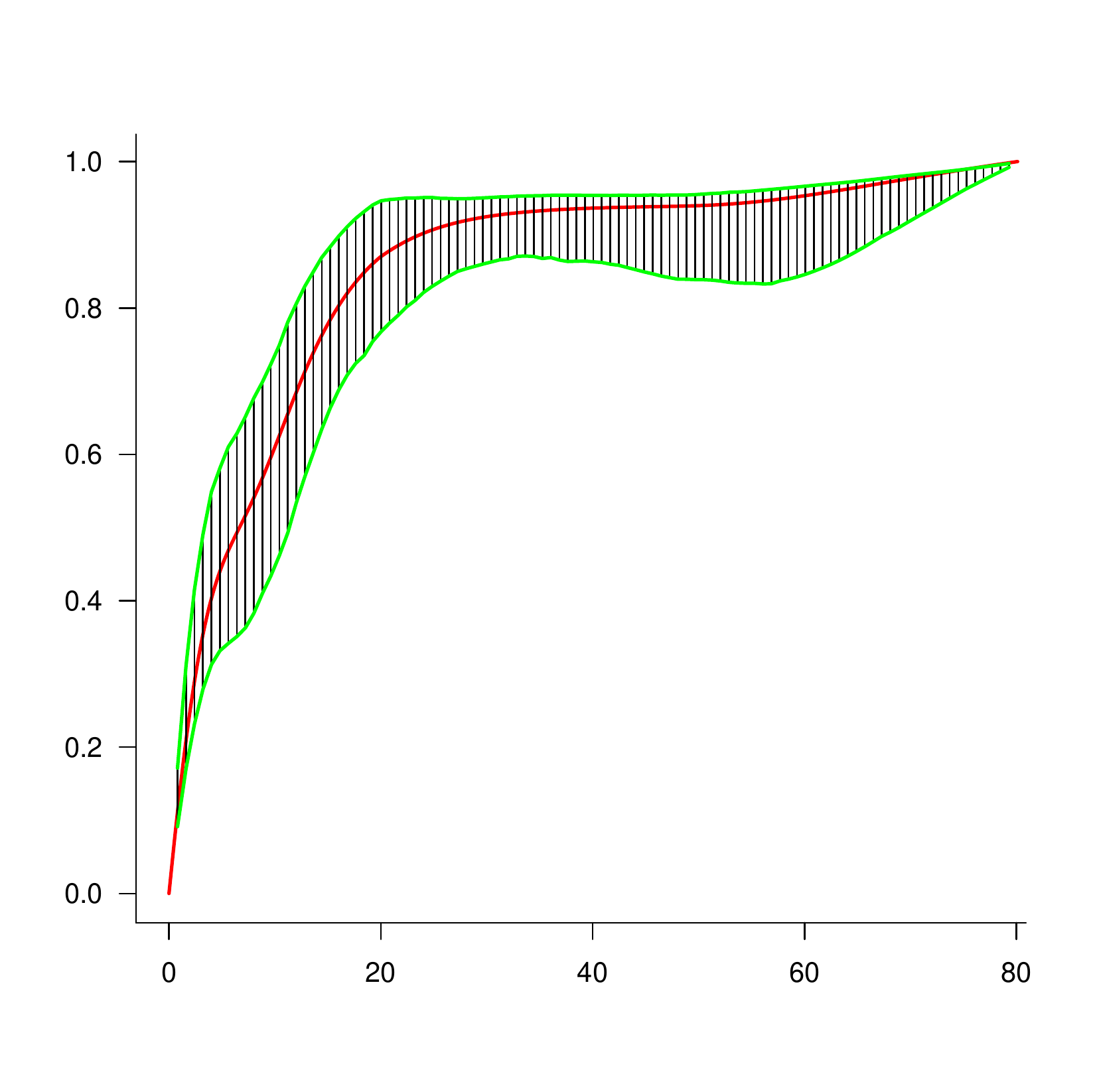}
		\caption{}
	\end{subfigure}
	\begin{subfigure}[b]{0.3\textwidth}
		\includegraphics[width=\textwidth]{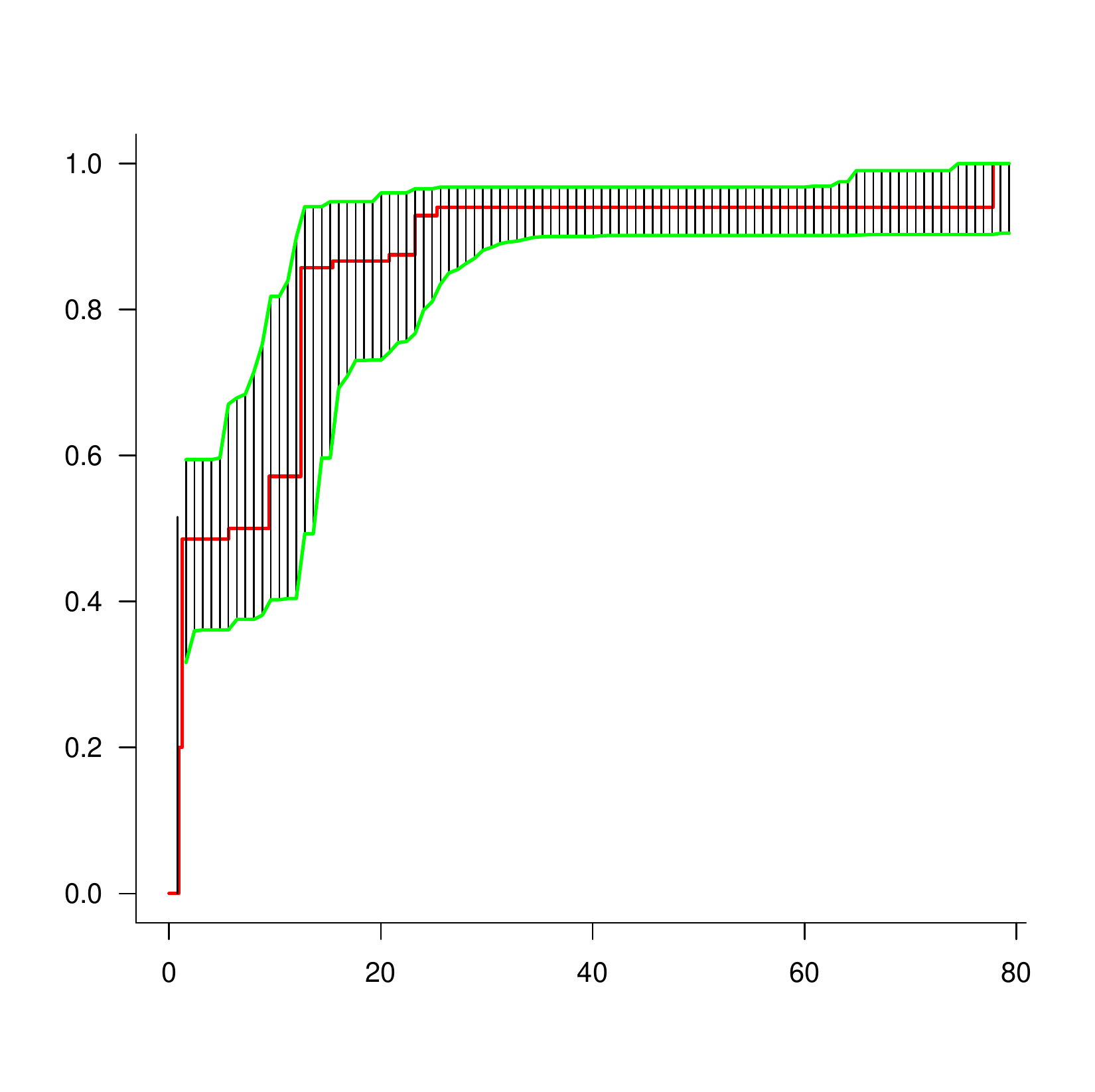}
		\caption{}
	\end{subfigure}
	\begin{subfigure}[b]{0.3\textwidth}
		\includegraphics[width=\textwidth]{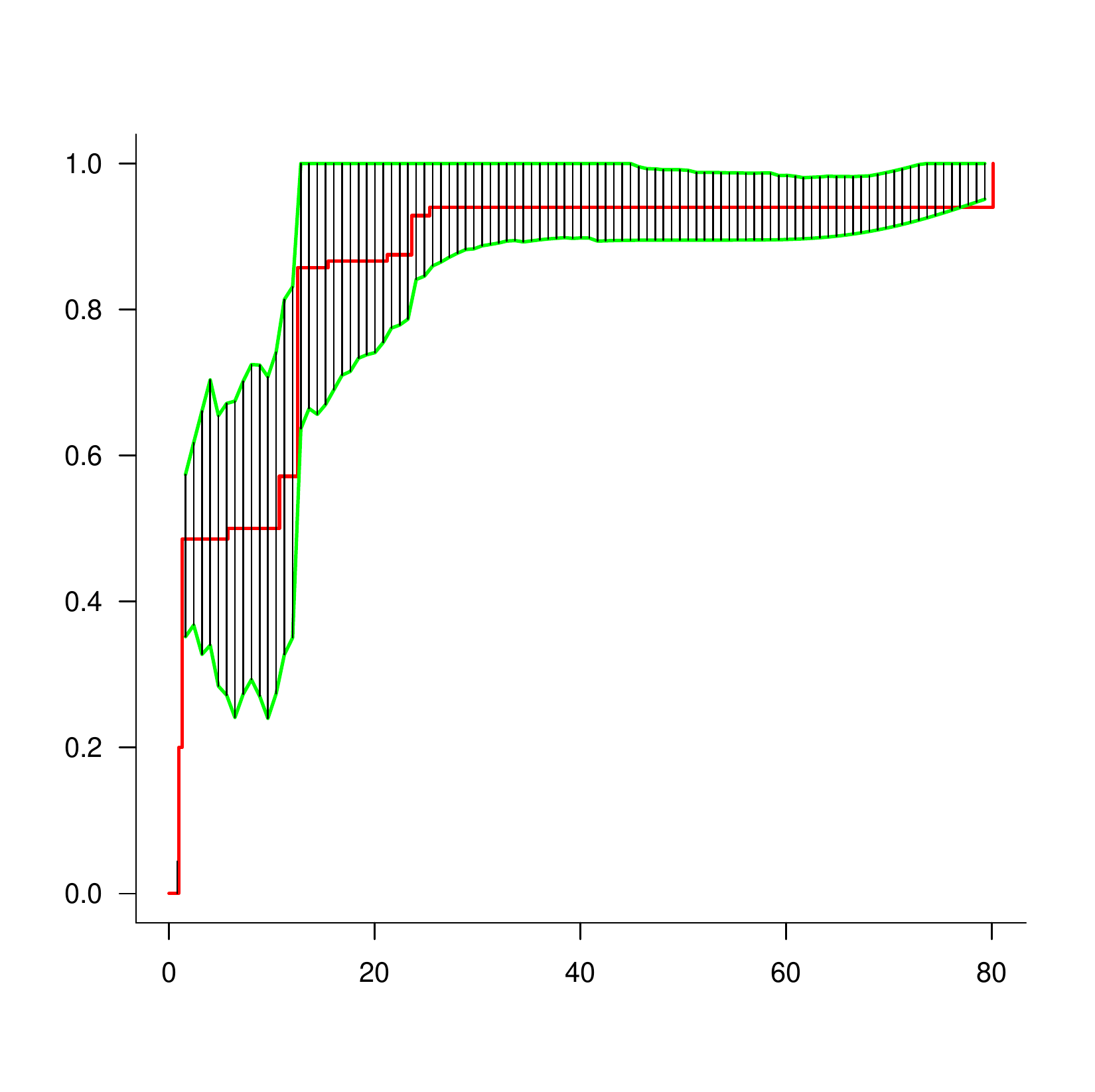}
		\caption{}
	\end{subfigure}
	\caption{Rubella data: (a) Studentized SMLE-based CI (\ref{CI_type2}), (b) Banerjee-Wellner CI and (c) Sen-Xu CI based on  $n=230$ observations using $1000$ bootstrap samples. In (a) the SMLE (red, solid) is given and in (b,c) the MLE (red, step function) is given. $h(t) = (0.25M + t)n^{-1/5}$ if $t\leq 20$ and $h(t) =h(20) + 2(t-20)n^{-1/5}$ else for SMLE-based CI and $h=80.12n^{-1/5}$ for Sen-Xu CI.  }
	\label{fig:Rubella_ci}
\end{figure}

\section{Concluding remarks}
\label{section:concluding remarks}
In this paper we presented a method for confidence interval estimation for the distribution function of a random variable which cannot be observed completely due to current status censoring. The CIs are based on a smooth bootstrap procedure. 
Unfortunately,  a rather negative feeling on the usefulness of bootstrap methods in this context is created by the results in \cite{abrevaya_huang2005} and \cite{kosorok:08}, showing that the classical bootstrap can not be used in reproducing the ``Chernoffian" limit distribution of the MLE in current status models and of the Grenander estimator in monotone density estimation. The result in \cite{sen_mouli_woodroofe:10} showing that even resampling from the Grenander estimator itself will not result in a consistent bootstrap has further contributed to this negative image of the bootstrap. 

A positive bootstrap result, on the other hand was derived in \cite{SenXu2015} showing that one can in fact reproduce the Chernoffian limit distribution if one resamples from a smooth estimate of the distribution function, such as the smoothed maximum likelihood estimator (SMLE). But we meet a familiar paradox in the field here: if one introduces smoothness conditions (which is also done in the conditions of limit theorems for the MLE), then one can usually achieve better convergence rates than the MLE achieves. For example, under the smoothness conditions of \cite{piet_geurt_birgit:10}, the SMLE achieves rate $n^{2/5}$ (familiar from density estimation), whereas the MLE only achieves rate $n^{1/3}$ (familiar from histogram estimation). The authors of \cite{SenXu2015} however use bootstrapping by resampling the indicators $\dd_i^*$ from the SMLE, while keeping the observation times $T_i$ fixed in combination with intervals around the MLE instead of the smooth estimate from which the resampling is done. It seems more natural to construct  confidence intervals on the basis of the SMLE instead of the MLE and this is indeed what we propose in the current paper. We have shown that the procedure, based on the SMLE, gives a consistent bootstrap, and has considerably smaller intervals than the intervals in \cite{banerjee_wellner:2005}, who used LR tests, based on the  (restricted) MLE, or \cite{SenXu2015} who used intervals, based on the MLE rather than the SMLE.

We showed in this article that the intervals based on the SMLE can be constructed in such a way that one gets a better boundary behavior, provided the necessary smoothness conditions are satisfied. The simulations also showed, not unexpectedly,  that the Studentized CIs were  better than the non-Studentized bootstrap CIs. In contrast to the unbiased MLE, the squared bias and variance for the SMLE are of the same order. We therefore found in our simulations that the performance of our CIs increases considerably if we subtracted the (unobserved) bias in the construction of the CIs. In practice it is of course not possible to subtract the real bias. However, our simulations showed a remarkable improvement of the behavior of the CIs if one uses a local bandwidth in combination with undersmoothing instead of one global bandwidth of order $n^{-2/5}$. We propose a bandwidth selection criteria based on the smooth bootstrap procedure developed in this paper and apply the concept of undersmoothing to reduce the bias effect when constructing confidence intervals around the SMLE. Further research related to the development of criteria to decide on how to adapt the bandwidth in order to handle the bias are worth studying in further research.

Rcpp scripts for producing the pictures of this paper and doing simulations can be found in \cite{github:15}.

\section*{Acknowledgements}
\label{section:acknowledgements}
We are very grateful to C\'ecile Durot for communicating her approach
to the proof of Lemma \ref{lemma_Th11.3} to us. The research of the second author was supported by the Research Foundation Flanders (FWO) [grant number 11W7315N]. Support from the IAP Research Network P7/06 of the Belgian State (Belgian Science Policy) is gratefully acknowledged. For the simulations we used the infrastructure of the VSC - Flemish Supercomputer Center, funded by the Hercules Foundation and the Flemish Government - department EWI.

\section{Appendix}
\label{section:appendix}
\subsection{Proof of Theorem \ref{th:bootstrap_SMLE}}
We denote the bootstrap sample by $(T_1,\dd_1^*),\dots,(T_n,\dd_n^*)$. Note that the sample is produced by keeping the $T_i$ fixed and drawing the $\dd_i^*$ from a Bernoulli distribution with probability $\tilde F_{nh}(T_i)$ at each $i$th draw.  
Let $\G_n$ be the empirical measure of $T_1,\ldots, T_n$ and let $\P_n^*$ denote the empirical measure of $(T_1,\dd_1^*),\dots,(T_n,\dd_n^*)$. We write
\begin{align*}
n^{-1} \sum_{i=1}^n f(T_i,\dd_i^*) = \int f(u,\d^*)\,d\P_n^*(u,\d^*).
\end{align*}
for some bounded function  $f:[0,M]\times\{0,1\}\to\R$. Note that for any bounded function $h:[0,M]\to\R$
\begin{align*}
n^{-1} \sum_{i=1}^n h(T_i) = \int h(u)\,d\P_n^*(u,\d^*) = \int h(u)\,d\G_n(u).
\end{align*}
Finally let  $P_n^*$ denote the conditional probability measure, given $(T_1,\dd_1),\dots,(T_n,\dd_n)$ and note that
\begin{align}
\label{expectation-delta}
P_n^*\left( \dd_i^* = 1\right) =\tilde F_{nh}(T_i) \quad i=1,\ldots,n.
\end{align}
For the proof of Theorem \ref{th:bootstrap_SMLE} we use the so-called ``switch relation'', which reduces the study of $\hat F_n^*$ to the study of an inverse process. To this end, we define the process $W_n^*$ by:
\begin{align*}
W_n^*(t)=n^{-1}\sum_{i=1}^n\dd_i^*1_{\{T_i\le t\}}.
\end{align*}
and the process (in $a$) $U_n^*$ by:
\begin{equation}
\label{argmin_process}
U_n^*(a)=\argmin\{t\in\R:W_n^*(t)-a\G_n(t)\}.
\end{equation}
Then, taking $a_n=\tilde F_{nh}(t)$, we get the {\it switch relation}:
\begin{align}
\label{switch_curstat}
P_n^*\left\{n^{1/3}\{\hat F_n^*(t)-\tilde F_{nh}(t)\}\ge x\right\}=
P_n^*\left\{\hat F_n^*(t)\ge a_n+n^{-1/3}x\right\}=P_n^*\left\{U_n^*(a_n+n^{-1/3}x)\le t\right\},
\end{align}

Now, let $U_n$ be defined by
\begin{align}
\label{def_U_n}
U_n(a)=\inf\{x\in\R:\tilde F_{nh}(x)\ge a\},\qquad a\in(0,1).
\end{align}
We have the following result.
\begin{lemma}
	\label{lemma_Th11.3}
	There are positive constants $C_1$ and $C_2$, such that, almost surely, for all $x>0$ and all large $n$:
	\begin{align*}
	P_n^*\left\{n^{1/3}\left|U_n^*(a)-U_n(a)\right|\ge x\right\}\le C_1e^{-C_2 x^3}.
	\end{align*}
\end{lemma}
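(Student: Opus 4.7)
Set $t_0=U_n(a)$, so that by monotonicity and continuity of $\tilde F_{nh}$ we have $\tilde F_{nh}(t_0)=a$ (for $a$ in the range of $\tilde F_{nh}$, which is all we need here). Writing $V_n^*(t)=W_n^*(t)-a\G_n(t)=n^{-1}\sum_{i=1}^n(\dd_i^*-a)1_{\{T_i\le t\}}$, the definition (\ref{argmin_process}) implies that the event $\{U_n^*(a)\ge t_0+xn^{-1/3}\}$ forces
\begin{align*}
\inf_{t\ge t_0+xn^{-1/3}}\bigl\{V_n^*(t)-V_n^*(t_0)\bigr\}\le 0,
\end{align*}
and analogously for the left tail $\{U_n^*(a)\le t_0-xn^{-1/3}\}$. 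The plan is to decompose $V_n^*=M_n^*+Z_n^*$ into its conditional mean and a centered empirical part, show that $M_n^*$ has quadratic growth around $t_0$, and apply Bernstein's inequality to $Z_n^*$ combined with a dyadic peeling argument.

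\textbf{Drift.} The conditional mean is $M_n^*(t)=n^{-1}\sum_{i=1}^n(\tilde F_{nh}(T_i)-a)1_{\{T_i\le t\}}$. Using that $\tilde F_{nh}$ is non-decreasing with $\tilde F_{nh}(t_0)=a$, that $\tilde F_{nh}'=K_h*d\hat F_n$ stays bounded away from zero in a fixed neighborhood of $t$ almost surely for large $n$ (because $f_0$ does and $\hat F_n\to F_0$ uniformly), and that $\G_n$ is close to $G$ with $g$ bounded away from zero, I would establish on a single full-probability set that for all large $n$,
\begin{align*}
M_n^*(t_0+s)-M_n^*(t_0)\ge c_1 s^2 \quad\text{for }|s|\le\delta_0, \qquad M_n^*(t_0+s)-M_n^*(t_0)\ge c_2|s|\quad\text{for }|s|>\delta_0,
\end{align*}
where $c_1,c_2,\delta_0>0$ depend only on the model constants. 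This is the ``curvature'' input that drives the $n^{1/3}$ rate.

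\textbf{Peeling and Bernstein.} The centered process $Z_n^*(t)-Z_n^*(t_0)=n^{-1}\sum_{i=1}^n\bigl(\dd_i^*-\tilde F_{nh}(T_i)\bigr)1_{\{t_0<T_i\le t\}}$ is a sum of independent mean-zero Bernoulli-type variables with summand bounds $1/n$ and conditional variance at most $Cn^{-1}|t-t_0|$ (since $\G_n(t)-\G_n(t_0)\le C|t-t_0|$ almost surely for large $n$). Partition $[xn^{-1/3},\delta_0]$ into dyadic shells $I_k=[2^kxn^{-1/3},2^{k+1}xn^{-1/3}]$. On $\{U_n^*(a)-t_0\in I_k\}$, the quadratic lower bound forces
\begin{align*}
\sup_{t\in[t_0,t_0+2^{k+1}xn^{-1/3}]}\bigl|Z_n^*(t)-Z_n^*(t_0)\bigr|\;\ge\; \tfrac{c_1}{2}\,4^kx^2n^{-2/3}.
\end{align*}
A maximal Bernstein inequality (via Ottaviani, or by monotone martingale arguments for the partial sums ordered by $T_i$) then gives a bound
\begin{align*}
\exp\!\left(-\,C\,\frac{(4^kx^2n^{-2/3})^2}{2^kxn^{-4/3}+n^{-1}\cdot 4^kx^2n^{-2/3}}\right)\;\le\;\exp(-C'\,8^k x^3),
\end{align*}
since the variance term $2^kxn^{-4/3}$ dominates for $x\lesssim n^{1/3}$. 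Summing the geometric series over $k\ge 0$ yields the claimed bound $C_1 e^{-C_2 x^3}$; an analogous argument handles the left tail, and the shells outside $\delta_0$ use the linear lower bound and decay even faster. For $x\gtrsim n^{1/3}$ the bound is trivial once we know $|U_n^*(a)-U_n(a)|$ is itself bounded (e.g.\ contained in $[0,M]$).

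\textbf{Main obstacle.} The delicate part is verifying the quadratic lower bound on $M_n^*$ with constants that do not depend on $n$ on a single almost-sure event. This requires simultaneously controlling (i) the SMLE's derivative in a neighborhood of $t$, which in turn requires uniform strong consistency of $\hat F_n$ and smoothness of the kernel, and (ii) the uniform closeness of $\G_n$ to $G$. Once both are handled, the Bernstein/peeling step is relatively standard; the $x^3$ exponent is the hallmark of the $n^{1/3}$ scaling balanced against a quadratic drift and a variance linear in the window length, reflecting the same mechanism that produces the classical Chernoff-type bounds for the Grenander estimator.
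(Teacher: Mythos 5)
Your proposal is correct and follows essentially the same route as the paper's proof: localization of the argmin via the switch relation, a drift/centered-sum decomposition in which the drift has quadratic curvature because the derivative of $\tilde F_{nh}$ is almost surely bounded below for large $n$, peeling into shells at scale $xn^{-1/3}$, a conditional Doob/Bernstein--Bennett exponential bound for the centered Bernoulli sums, DKW-type control of $\G_n$ versus $G$ (the paper uses the quantile-function version, Lemma \ref{lem: DKWinv}, after passing to the uniform scale through $V_n=(\tilde F_{nh}\circ G^{-1})^{-1}$ and $V_n^*$, whereas you stay in the original time scale), and trivialization of extreme values of $x$. The only point to tighten is that your drift bound $M_n^*(t_0+s)-M_n^*(t_0)\ge c_1 s^2$ cannot hold uniformly down to $s=0$ (the empirical measure $\G_n$ has gaps near $t_0$), but it is only needed for $|s|\ge xn^{-1/3}$ with $x$ bounded away from zero, small $x$ being absorbed by enlarging $C_1$, exactly as in the paper's reduction to $x\in(n^{-1/3},M]$ in the unscaled formulation.
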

Note that, in the unconditional setting, this is Theorem 11.3 in \cite{piet_geurt:14}.  Let  $E_n^*$ denote the conditional expectation, given $(T_1,\dd_1),\dots,(T_n,\dd_n)$, it follows from Lemma \ref{lemma_Th11.3} and the switch-relation that
\begin{align}
\label{L2_bound_expectation}
E_n^*\left\{\hat F_n^*(t)-\tilde F_{nh}(t)\right\}^2 \leq Kn^{-2/3} \quad \forall t \in [0,M],
\end{align}
which moreover implies that
\begin{align}
\label{bootstrap_L_2-bound}
\|\hat F_n^*-\tilde F_{nh}\|_2=O_p^*\left(n^{-1/3}\right),
\end{align}
where $O_p^*\left(n^{-1/3}\right)$ means that for all $\e>0$ and almost all sequences $(T_1,\dd_1),(T_2,\dd_2),\dots$, there exists an $M>0$ such that
\begin{align*}
P_n^*\left\{n^{1/3}\|\hat F_n^*-\tilde F_{nh}\|_2\ge M\right\}<\e,
\end{align*}
for all large $n$.
We also have, similarly:
\begin{equation}
\label{local_L_2_bound}
\int_{t-h}^{t+h}\bigl\{\hat F_n^*(x)-\tilde F_{nh}(x)\bigr\}^2\,dx= O_p^*\left(h n^{-2/3}\right),
\end{equation}
conditionally on $(T_1,\dd_1),(T_2,\dd_2),\dots$. See p.\ 320 of \cite{piet_geurt:14} for the relation of Lemma \ref{lemma_Th11.3} to these last statements. We now give the proof of Theorem \ref{th:bootstrap_SMLE}, using the result of Lemma \ref{lemma_Th11.3}. The proof of Lemma \ref{lemma_Th11.3} is given at the end of this section.

\begin{proof}[Proof of Theorem \ref{th:bootstrap_SMLE}]
	Define the functions
	\begin{align*}
	\psi_{t,h}(u) = \frac{K_h(t-u)}{g(u)}
	\end{align*}
	and
	\begin{align*}
	\bar\psi_{t,h}^*(u)=
	\left\{\begin{array}{lll}
	\psi_{t,h}(\t_i),\,&\mbox{ if }\tilde F_{nh}(u)>\hat F_{n}^*(\t_i),\,u\in[\t_i,\t_{i+1}),\\
	\psi_{t,h}s),\,&\mbox{ if }\tilde F_{nh}(u)=\hat F_{n}^*(s),\mbox{ for some }s\in[\t_i,\t_{i+1}),\\
	\psi_{t,h}(\t_{i+1}),\,&\mbox{ if }\tilde F_{nh}(u)<\hat F_{n}^*(\t_i),\,u\in[\t_i,\t_{i+1}),
	\end{array}
	\right.
	\end{align*}
	where the $\t_i$ are the points of jump of $\hat F_{n}^*$. By the convex minorant interpretation of $\hat F_n^*$ we have,
	\begin{align*}
	&\int \bar\psi_{t,h}^*(u) \left\{\d^* - \hat F_{n}^*(u) \right\} d\P_n^*(u,\d^*)=0.
	\end{align*}
	This implies that,
	\begin{align*}
	0 &= \int \bar\psi_{t,h}^*(u) \left\{\d^* - \hat F_{n}^*(u) \right\}d\P_n^*(u,\d^*) \\
	&= \int \psi_{t,h}(u) \left\{\d^* - \hat F_{n}^*(u) \right\}d\P_n^*(u,\d^*)
	+\int \left\{\bar\psi_{t,h}^*(u) - \psi_{t,h}(u) \right \} \left\{\d^* - \hat F_{n}^*(u) \right\}d\P_n^*(u,\d^*)  \\
	&=  \int \psi_{t,h}(u) \left\{\d^* - \tilde F_{nh}^*(u) \right\}d(\P_n^*- P_n^*)(u,\d^*) \\
	& \qquad  + \int \psi_{t,h}(u) \left\{ \tilde F_{nh}^*(u)  - \hat F_{n}^*(u) \right\}d\P_n^*(u,\d^*)\\
	& \qquad  + \int \left\{\bar\psi_{t,h}^*(u) - \psi_{t,h}(u) \right \} \left\{\d^* - \hat F_{n}^*(u) \right\}d\P_n^*(u,\d^*),
	\end{align*}
	where we write $d(\P_n^*- P_n^*)$ instead of $d\P_n^*$ in the last equality as a result of (\ref{expectation-delta}).
	Using integrating by parts we have,
	\begin{align*}
	\tilde F_{nh}^*(t)-\int \IK_h(t-u)\,d\tilde F_{nh}(u)
	=\int \psi_{t,h}(u)\,\left\{\hat F_n^*(u)-\tilde F_{nh}(u)\right\}dG(u).
	\end{align*}
	So we find,
	\begin{align*}
	&\tilde F_{nh}^*(t)-\int \IK_h(t-u)\,d\tilde F_{nh}(u)\\
	&= \int \bar\psi_{t,h}^*(u) \left\{\d^* - \hat F_{n}^*(u) \right\}d\P_n^*(u,\d^*) -\int \psi_{t,h}(u)\,\left\{ \tilde F_{nh}(u)-\hat F_n^*(u)\right\}dG(u)\\
	&= \int \psi_{t,h}(u) \left\{\d^* - \tilde F_{nh}(u) \right\}d(\P_n^*- P_n^*)(u,\d^*) \\
	& \qquad  + \int \psi_{t,h}(u) \left\{ \tilde F_{nh}(u)  - \hat F_{n}^*(u) \right\}d(\G_n-G)(u,\d^*)\\
	& \qquad  + \int \left\{\bar\psi_{t,h}^*(u) - \psi_{t,h}(u) \right \} \left\{\d^* - \hat F_{n}^*(u) \right\}d\P_n^*(u,\d^*)
	\\& = A_{I} + A_{II} + A_{III}. 
	\end{align*}
	To study the asymptotic distribution of $$n^{2/5}\left\{\tilde F_{nh}^*(t)-\int \IK_h(t-u)\,d\tilde F_{nh}(u)\right\},$$
	we therefore have to analyze the three terms $A_{I},  A_{II}$ and $A_{III}$.	We start with $A_I$ and prove:
	\begin{align}
	\label{term1}
	n^{2/5}\int \psi_{t,h}(u) \left\{\d^* - \tilde F_{nh}(u) \right\}d(\P_n^*- P_n^*)(u,\d^*) \stackrel{{\cal D}}{\longrightarrow} N\left(0,\sigma^2\right)
	\end{align}
	where $\sigma^2$ is defined in (\ref{mu-sigma}).
	Define 
	$$Z_{nh,i} = n^{-3/5}\psi_{t,h}(T_i) \left\{\dd_i^* - \tilde F_{nh}(T_i) \right\} .$$
	The left hand side of (\ref{term1}) can then be expressed as $\sum_{i=1}^{n}Z_{nh,i}$. Conditionally on $(T_1,X_1),\ldots, (T_n, X_n)$, $Z_{nh,i}$ has mean zero and variance
	$$ \sigma_{nh,i}^2 = n^{-6/5}\psi_{t,h}^2(T_i) \tilde F_{nh}(T_i)\left\{1 - \tilde F_{nh}(T_i) \right\}$$
	Therefore, along almost all sequences $(T_1,\dd_1),(T_2,\dd_2)\dots$,
	\begin{align*}
	\sum_{i=1}^{n} \sigma_{nh,i}^2 &= n^{-1/5}\int \psi_{t,h}^2(u) \tilde F_{nh}(u)\left\{1 - \tilde F_{nh}(u) \right\}d\G_n(u)\\
	& = n^{-1/5}\int \psi_{t,h}^2(u) \tilde F_{nh}(u)\left\{1 - \tilde F_{nh}(u) \right\}dG(u) + o(1) \\
	&= \int_{-1}^1 K^2(u) \tilde F_{nh}(t + hu )\left\{1 - \tilde F_{nh}(t + hu) \right\}g(t+hu)du + o(1)\\
	& \to \frac{F_0(t)\{1-F_0(t)\}}{cg(t)}\int K^2(u)du=\sigma^2.
	\end{align*}
	where we use the a.s. convergence of $ F_{nh}(t) \to  F_{0}(t)$ in the last line. By the Lindeberg-Feller CLT, we have,
	\begin{align*}
	\sum_{i=1}^{n} Z_{nh,i} \stackrel{{\cal D}}{\longrightarrow}N\left(0,\sigma^2\right) .
	\end{align*}
	This proves (\ref{term1}).
	
	We next consider $A_{II}$. From the fact that the integrand is the product of $h^{-1}$ times the fixed bounded continuous function
	$u\mapsto K((t-u)/h)/g(u)$ and the class of functions of bounded variation $\hat F_n^*-\tilde F_{nh}$ which have entropy with bracketing of order $\e^{-1}$ for the $L_2$-distance and are of order $O_p^*(n^{-1/3})$ for the $L_2$-distance, again conditionally on $\omega=(T_1,\dd_1),(T_2,\dd_2),\dots$, it follows that $A_{II}$ is of order $O_p^*(h^{-1}n^{-2/3})$. As a consequence, we have for $h\asymp n^{-1/5}$,
	\begin{align}
	\label{term2}
	A_{II}= \int \psi_{t,h}(u) \left\{\tilde F_{nh}(u) - \hat F_{n}^*(u) \right\} d(\G_n-G)(u) = o_p^*(n^{-2/5})
	\end{align}
	
	We finally study the term $A_{III}$.  Using similar arguments as in the proof of Lemma  A.4 in \cite{piet_geurt_birgit:10}, there exists a positive constant C such that
	\begin{align}
	\label{diff-barpsi-psi}
	\left\vert \bar \psi_{t,h}^*(u)  - \psi_{t,h}(u) \right\vert \leq Ch^{-2}\left\vert \hat F_n^*(u)- \tilde F_{nh}(u) \right\vert
	\end{align} 
	for all $u$ such that $\tilde f_{nh} =\tilde F'_{nh}$ is positive and continuous in a neighborhood around $u$.
	By (\ref{expectation-delta}), we can write,
	\begin{align}
	\label{A_{III}-expansion}
	A_{III} &= \int \left\{\bar\psi_{t,h}^*(u) - \psi_{t,h}(u) \right \} \left\{\d^* - \tilde F_{nh}(u) \right\}d(\P_n^*-P_n^*)(u,\d^*)\nonumber\\
	&\qquad  + \int \left\{\bar\psi_{t,h}^*(u) - \psi_{t,h}(u) \right \} \left\{\tilde F_{nh}(u) - \hat F_{n}^*(u) \right\}d\G_n(u).
	\end{align}
	It is clear that
	\begin{align*}
	&\int \left\{\bar\psi_{t,h}^*(u) - \psi_{t,h}(u) \right \} \left\{\d^* - \tilde F_{nh}(u) \right\}d(\P_n^*-P_n^*)(u,\d^*)\\
	&\qquad=o_p^*\left(\int \psi_{t,h}(u) \left\{\d^* - \tilde F_{nh}(u) \right\}d(\P_n^*-P_n^*)(u,\d^*)\right),
	\end{align*}
	which is $o_p^*(n^{-2/5})$ by (\ref{term1}).
	For the second term on the right-hand side of (\ref{A_{III}-expansion}) we get by (\ref{diff-barpsi-psi}) and (\ref{local_L_2_bound}):
	\begin{align}
	\label{term3}
	& \left|\int \left\{\bar\psi_{t,h}^*(u) - \psi_{t,h}(u) \right \} \left\{\tilde F_{nh}(u) - \hat F_{n}^*(u) \right\}d\G_n(u)\right|\nonumber\\
	&\le Ch^{-2}\int_{t-h}^{t+h}\left\{\tilde F_{nh}(u) - \hat F_{n}^*(u) \right\}^2\,d\G_n(u)=O_p^*\left(h^{-1}n^{-2/3}\right)=O_p^*\left(n^{-7/15}\right).
	\end{align}
	The proof of Theorem \ref{th:bootstrap_SMLE} now follows by (\ref{term1}),(\ref{term2}) and (\ref{term3}).
\end{proof}

In the next section we give the proof of Lemma \ref{lemma:boundary_correction} about the boundary corrected version of the SMLE.
\subsection{Proof of Lemma \ref{lemma:boundary_correction}}
\begin{proof}
	We have:
	\begin{align*}
	&\int \left\{\IK_h(t-u)+\IK_h(t+u)-\IK_h(2M-t-u)\right\}\,d\tilde F_{nh}^{(bc)}(u)\\
	&=\int_{u=0}^M \left\{\IK_h(t-u)+\IK_h(t+u)-\IK_h(2M-t-u)\right\}\tilde f_{nh}^{(bc)}(u)\,du.
	\end{align*}
	If $t\in[h,M-h]$ we get, noting that $\IK_h(t+u)=\IK_h(2M-t-u)=1$, if $t\in[h,M-h]$,
	\begin{align*}
	&\int_{u=0}^{M} \left\{\IK_h(t-u)+\IK_h(t+u)-\IK_h(2M-t-u)\right\}\tilde f_{nh}^{(bc)}(u)\,du\\
	&=\int_{u=0}^{M} \IK_h(t-u)\tilde f_{nh}^{(bc)}(u)\,du\\
	&=\int_{u=0}^{M} \IK_h(t-u)\int\left\{K_h(u-v)+K_h(u+v)+K_h(2M-u-v)\right\}\,d\hat F_n(v)\,du\\
	&=\int\left\{\int_{u=0}^{M} \IK_h(t-u)\left\{K_h(u-v)+K_h(u+v)+K_h(2M-u-v)\right\}\,du\right\}\,d\hat F_n(v)\\
	&=\int\left\{\widetilde{\IK}_h(t-v)+\widetilde{\IK}_h(t+v)-\widetilde{\IK}_h(2M-t-v)\right\}\,d\hat F_n(v)
	\end{align*}
	The last transition follows from integration by parts and the symmetry of the kernel $K$:
	\begin{align*}
	&\int_{u=0}^{M} \IK_h(t-u)\left\{K_h(u-v)+K_h(u+v)+K_h(2M-u-v)\right\}\,du\\
	&=\left[\IK_h(t-u)\left\{\IK_h(u-v)+\IK_h(u+v)-\IK_h(2M-u-v)\right\}\right]_{u=0}^M\\
	&\qquad\qquad+\int K_h(t-u)\left\{\IK_h(u-v)+\IK_h(u+v)-\IK_h(2M-u-v)\right\}\,du\\
	&=\int K_h(t-u)\left\{\IK_h(u-v)+\IK_h(u+v)-\IK_h(2M-u-v)\right\}\,du\\
	&=\int \left\{\IK_h(t-v-hw)+\IK_h(t+v-hw)-\IK_h(2M-t-v-hw)\right\}K(w)\,dw\\
	&=\widetilde{\IK}_h(t-v)+\widetilde{\IK}_h(t+v)-\widetilde{\IK}_h(2M-t-v).
	\end{align*}
	if $t\in[h,M-h]$.
	
	We likewise get, if $t\in[0,h]$,
	\begin{align*}
	&\int_{u=0}^{M} \left\{\IK_h(t-u)+\IK_h(t+u)-\IK_h(2M-t-u)\right\}\tilde f_{nh}^{(bc)}(u)\,du\\
	&=\int_{u=0}^{M} \left\{\IK_h(t-u)+\IK_h(t+u)-1\right\}\tilde f_{nh}^{(bc)}(u)\,du\\
	\end{align*}
	\begin{align*}
	&=\int_{u=0}^{M} \left\{\IK_h(t-u)+\IK_h(t+u)-1\right\}\\
	&\qquad\qquad\qquad\qquad\cdot\int\left\{K_h(u-v)+K_h(u+v)+K_h(2M-u-v)\right\}\,d\hat F_n(v)\,du\\
	&=\int\left\{\widetilde{\IK}_h(t-v)+\widetilde{\IK}_h(t+v)-\widetilde{\IK}_h(2M-t-v)\right\}\,d\hat F_n(v).
	\end{align*}
	In the last transition we use integration by parts again:
	\begin{align*}
	&\int_{u=0}^{M} \left\{\IK_h(t-u)+\IK_h(t+u)-1\right\}\\
	&\qquad\qquad\qquad\qquad\cdot\left\{K_h(u-v)+K_h(u+v)+K_h(2M-u-v)\right\}\,du\\
	&=\left[\left\{\IK_h(t-u)+\IK_h(t+u)-1\right\}\left\{\IK_h(u-v)+\IK_h(u+v)-\IK_h(2M-u-v)\right\}\right]_{u=0}^M\\
	&\qquad+\int_{u=0}^M\left\{K_h(t-u)-K_h(t+u)\right\}\left\{\IK_h(u-v)+\IK_h(u+v)-\IK_h(2M-u-v)\right\}\,du\\
	&=\int_{u=0}^M\left\{K_h(t-u)-K_h(t+u)\right\}\left\{\IK_h(u-v)+\IK_h(u+v)-\IK_h(2M-u-v)\right\}\,du,
	\end{align*}
	where we use $\IK_h(-v)+\IK_h(v)=1$ in the last equality (which follows from the symmetry of $K$).
	Furthermore,
	\begin{align*}
	&\int_{u=0}^M\left\{K_h(t-u)-K_h(t+u)\right\}\left\{\IK_h(u-v)+\IK_h(u+v)-\IK_h(2M-u-v)\right\}\,du\\
	&=\int_{w=-1}^{t/h}K(w)\left\{\IK_h(t-v-hw)+\IK_h(t+v-hw)-1\right\}\,dw\\
	&\qquad\qquad-\int_{w=t/h}^{1}K(w)\left\{\IK_h(-t-v+hw)+\IK_h(-t+v+hw)-1\right\}\,dw\\
	&=\int_{w=-1}^{t/h}K(w)\left\{\IK_h(t-v-hw)+\IK_h(t+v-hw)-1\right\}\,dw\\
	&\qquad\qquad+\int_{w=t/h}^{1}K(w)\left\{\IK_h(t+v-hw)+\IK_h(t-v-hw)-1\right\}\,dw\\
	&=\int_{w=-1}^1K(w)\left\{\IK_h(t-v-hw)+\IK_h(t+v-hw)-1\right\}\,dw\\
	&=\int K(w)\left\{\IK_h(t-v-hw)+\IK_h(t+v-hw)-K_h(2M-t-v-hw)\right\}\,dw\\
	&=\widetilde{\IK}_h(t-v)+\widetilde{\IK}_h(t+v)-\widetilde{\IK}_h(2M-t-v),
	\end{align*}
	again using the relation $\IK_h(x)+\IK_h(-x)=1$.

	The case $t\in[M-h,M]$ is treated similarly.
\end{proof}
In the remaining subsection we prove Lemma \ref{lemma_Th11.3} needed in the proof of Theorem \ref{th:bootstrap_SMLE}.

\subsection{Proof of Lemma \ref{lemma_Th11.3}}
In the proof of Lemma \ref{lemma_Th11.3} we use the following (Dvoretsky-Kiefer-Wolfowitz-type) inequality from \cite{mouli:16}.

\begin{lemma}[Lemma 8.1 of \cite{mouli:16}]
	\label{lem: DKWinv} 
	Let $F$ be a distribution function on $\R$ with a density $f$ supported on $[0,1]$ and bounded away from zero on $[0,1]$.
	Let $\F_{n}$ be the empirical distribution function associated with a sample of $n$ observations from  $F$ and let $\F_{n}^{-1}$ be the corresponding empirical quantile function. With $c$ a lower bound for $f$, we then have
	$$
	\P\left(\sup_{t\in[0,1]}|\F_{n}^{-1}(t)-F^{-1}(t)|>x\right)\leq 4\exp(-2nc^2x^2)
	$$
	for all $n$ and $x>0$. 
\end{lemma}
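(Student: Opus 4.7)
The plan is to reduce the quantile-process bound to the classical Dvoretzky–Kiefer–Wolfowitz (DKW) inequality for the empirical distribution function, via a two-step reduction: first transport the problem to the uniform case using the probability integral transform, then exploit the Lipschitz property of $F^{-1}$ on $[0,1]$ guaranteed by the lower bound $f \geq c$.

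First I would set $U_i = F(X_i)$, so that $U_1,\dots,U_n$ are i.i.d.\ Uniform$(0,1)$, and let $\G_n$ denote their empirical distribution function. Because $F$ is continuous and strictly increasing on $[0,1]$ (the density is strictly positive there), the composition identity
$$\F_n^{-1}(t) = F^{-1}\!\bigl(\G_n^{-1}(t)\bigr),\qquad t\in(0,1],$$
holds. Next, the hypothesis $f \geq c$ on $[0,1]$ implies that for $u,v\in[0,1]$ one has $|F(u)-F(v)| \geq c|u-v|$, equivalently $F^{-1}$ is $1/c$-Lipschitz on $[0,1]$. Combining the two facts gives, pointwise in $t$,
$$\bigl|\F_n^{-1}(t) - F^{-1}(t)\bigr| = \bigl|F^{-1}(\G_n^{-1}(t)) - F^{-1}(t)\bigr| \leq c^{-1}\bigl|\G_n^{-1}(t) - t\bigr|.$$
Taking suprema and rescaling, the event $\{\sup_{t}|\F_n^{-1}(t) - F^{-1}(t)|>x\}$ is contained in $\{\sup_{t}|\G_n^{-1}(t) - t|>cx\}$, so it suffices to bound the tail of the uniform quantile deviation at the level $cx$.

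For that, I would use the switch relation $\G_n^{-1}(t)\leq s \Leftrightarrow \G_n(s)\geq t$, valid for the right-continuous empirical $\G_n$. This gives the inclusions
$$\bigl\{\G_n^{-1}(t) > t+cx\bigr\} \subseteq \Bigl\{\sup_{u}\bigl(u-\G_n(u)\bigr) > cx\Bigr\}, \qquad \bigl\{\G_n^{-1}(t) < t-cx\bigr\} \subseteq \Bigl\{\sup_{u}\bigl(\G_n(u)-u\bigr) \geq cx\Bigr\},$$
uniformly in $t$. Hence
$$\P\Bigl(\sup_{t}|\G_n^{-1}(t)-t|>cx\Bigr) \leq \P\Bigl(\sup_{u}|\G_n(u)-u|>cx\Bigr) + \P\Bigl(\sup_{u}|\G_n(u)-u|\geq cx\Bigr),$$
and applying the standard two-sided DKW inequality $\P(\sup_u|\G_n(u)-u|\geq y)\leq 2e^{-2ny^2}$ to each summand at $y=cx$ produces the bound $4\exp(-2nc^2x^2)$, completing the proof.

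There is no serious analytical obstacle here: the reduction is short and elementary once the right identity and Lipschitz constant are in hand. The only points requiring care are the behaviour at the endpoints $t\in\{0,1\}$ (where the quantile identity is degenerate but contributes zero to the supremum) and the bookkeeping of one-sided versus two-sided DKW constants, which is precisely what accounts for the factor $4$ rather than the sharper $2$ in the final inequality.
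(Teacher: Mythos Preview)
The paper does not actually prove this lemma: it is quoted verbatim as Lemma~8.1 of Banerjee, Durot and Sen (2016) and then used as a tool in the proof of Lemma~\ref{lemma_Th11.3}. So there is no ``paper's own proof'' to compare against; your task reduces to checking that your argument is sound.

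Your argument is correct and is exactly the standard route for this kind of inequality. The probability-integral-transform identity $\F_n^{-1}=F^{-1}\circ\G_n^{-1}$ is valid because $F$ is continuous and strictly increasing on $[0,1]$, and the $1/c$-Lipschitz bound for $F^{-1}$ follows immediately from $f\ge c$. One small remark: your final bookkeeping is slightly wasteful. The two one-sided inclusions you derive give
\[
\Bigl\{\sup_t|\G_n^{-1}(t)-t|>cx\Bigr\}\subseteq\Bigl\{\sup_u(u-\G_n(u))>cx\Bigr\}\cup\Bigl\{\sup_u(\G_n(u)-u)>cx\Bigr\}=\Bigl\{\sup_u|\G_n(u)-u|>cx\Bigr\},
\]
so a \emph{single} application of the two-sided DKW inequality already yields the constant $2$ rather than $4$. (In fact, for the uniform case one has the exact identity $\sup_t|\G_n^{-1}(t)-t|=\sup_u|\G_n(u)-u|$, which can be checked by evaluating both sides at the order statistics.) Your doubling to $4$ is therefore unnecessary, but of course still delivers the bound as stated in the lemma.
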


\begin{proof}[Proof of Lemma \ref{lemma_Th11.3}] We follow notation, introduced in Section 4.1 of \cite{mouli:16}, but now applied to a bootstrap sample $(T_1,\dd_1^*)\dots,(T_n,\dd_n^*)$. Just as in the proof of the corresponding Theorem 11.3 in \cite{piet_geurt:14}, Doob's inequality and exponential
	centering play an important role in the proof.

	Moreover, we prove the equivalent statement
	\begin{align}
	\label{equiv_statement}
	P_n^*\left\{\bigl|U_n^*(a)-U_n(a)\bigr|>x\right\}\le c_1\exp\left\{-c_2nx^3\right\},
	\end{align}
	almost surely, for all large $n$, and constants $c_1,c_2>0$ and all $x\in(n^{-1/3},M]$. To see that this is equivalent, first note that 
	\begin{align*}
	P_n^*\left\{n^{1/3}\bigl|U_n^*(a)-U_n(a)\bigr|>x\right\}=P_n^*\left\{\bigl|U_n^*(a)-U_n(a)\bigr|> n^{-1/3}x\right\},
	\end{align*}
	so, if (\ref{equiv_statement}) holds, we get:
	\begin{align*}
	P_n^*\left\{n^{1/3}\bigl|U_n^*(a)-U_n(a)\bigr|>x\right\}\le c_1\exp\left\{-c_2x^3\right\},
	\end{align*}
	for all $x>0$.	
	Next note that for $x\in[0,n^{-1/3}]$
	$$
	c_1\exp\left\{-c_2nx^3\right\}\ge c_1\exp\left\{-c_2\right\}\ge1,
	$$
	if $c_1\ge e^{c_2}$. So we can always adapt the constants in such a way that the inequality is satisfied for $x\in[0,n^{-1/3}]$.
	
	Furthermore, for $x\in[1,M]$, we can write:
	\begin{align*}
	c_1\exp\left\{-c_2nx^2\right\}\le c_1\exp\left\{-(c_2/M)nx^3\right\}
	\end{align*}
	So for $x\in[1,M]$, we only need an inequality with $x^2$ in the exponent on the right-hand side, and can use Lemma \ref{lem: DKWinv} 
	to our advantage (see below). Finally, for $x>M$, the probability on the left-hand side of (\ref{equiv_statement}) is zero.
	
	Let $\Lambda_n^*:[0,1]\to[0,1]$ be defined by $\Lambda_n^*(0)=0$, and
	\begin{align*}
	\Lambda_n^*(i/n)=n^{-1}\sum_{j\le i}\dd_j^*,\qquad i=1,\dots,n,
	\end{align*}
	and by linear interpolation at other points of $[0,1]$. Furthermore, let $\l_n^*$ be the left-continuous slope of the greatest convex minorant of $\Lambda_n^*$.
	Then:
	$$
	\hat F_n^*(T_i)=\l_n^*(i/n)=\l_n^*(\G_n(T_i)),
	$$
	where $\G_n$ is the empirical distribution function of the observations $T_1,\dots,T_n$ and $\hat F_n^*$ is the MLE in the bootstrap sample.
	
	We define analogously $\tilde \l_n=\tilde F_{nh}\circ G^{-1}$, and
	\begin{align*}
	\tilde \L_n(t)=\int_0^t\tilde\l_n(u)\,du=\int_0^t \tilde F_{nh}\left(G^{-1}(u)\right)\,du,\qquad t\in[0,1].
	\end{align*}
	Moreover, we define:
	\begin{align}
	\label{def_V_n}
	V_n=\tilde\l_n^{-1}.
	\end{align}
	
	With these definitions we have:
	\begin{align}
	\label{U_n_V_n}
	U_n=G^{-1}\circ\tilde\l_n^{-1}=G^{-1}\circ V_n,
	\end{align}
	where $U_n$ is defined by (\ref{def_U_n}). 
	By the model assumptions at the beginning of Section \ref{section:pointwiseCI} for $F_0$ and $G$, and the almost sure convergence of $\tilde F_{nh}$ and its derivative to $F_0$ and $f_0$, respectively, uniformly on $[0,M]$ (using the suggested boundary correction near $0$ and $M$), we may assume that there is a constant $c>0$ such that $\tilde \l_n'(t)\ge c$ for all $t\in[0,1]$ and all large $n$, and that therefore, using a Taylor expansion, we get:
	\begin{align}
	\label{lower_bound_L_n}
	\tilde \L_n(t)-\tilde \L_n(V_n(a))\ge \bigl(t-V_n(a)\bigr) a+\tfrac12 c\bigl(t-V_n(a)\bigr)^2,
	\end{align}
	for all $t,a\in[0,1]$.
	
	We similarly define
	\begin{align*}
	V_n^*(a)=\argmin_{u\in[0,1]}\{\Lambda_n^*(u)-a u\},
	\end{align*}
	where $\argmin$ denotes the smallest location of the minimum. Note that, analogously to (\ref{U_n_V_n}), we have for $U_n^*$ as defined by (\ref{argmin_process}):
	\begin{align}
	\label{U_n*_V_n^*}
	U_n^*=\G_n^{-1}\circ V_n^*.
	\end{align}
	By the transition of $U_n$ and $U_n^*$ to $V_n$ and $V_n^*$, respectively, the range of $U_n$ and $U_n^*$  is changed from $[0,M]$ to $[0,1]$. We now prove:
	\begin{align}
	\label{equiv_statement2}
	P_n^*\left\{\bigl|V_n^*(a)-V_n(a)\bigr|>x\right\}\le c_1\exp\left\{-c_2nx^3\right\},
	\end{align}
	almost surely, for all large $n$, and constants $c_1,c_2>0$ and all $x\in(n^{-1/3},1]$. Note that the probability on the left-hand side of (\ref{equiv_statement2}) is zero if $x>1$.

	Define
	\begin{align*}
	\e_i^*=\dd_i^*-\tilde F_{nh}(T_i),\qquad i=1,\dots,n.
	\end{align*}
	Then:
	\begin{align*}
	\L_n^*(i/n)&=n^{-1}\sum_{j\le i}\e_j^*+n^{-1}\sum_{j\le i}\tilde F_{nh}\left(\G_n^{-1}(j/n)\right)\\
	&=n^{-1}\sum_{j\le i}\e_j^*+\int_0^{i/n}\tilde F_{nh}\left(\G_n^{-1}(u)\right)\,du,\qquad i=1,\dots,n,
	\end{align*}
	using the piecewise constancy of $\G_n^{-1}$.
	
	This gives:
	\begin{align*}
	&P_n^*\left\{\bigl|V_{n}^*(a)-V_n(a)\bigr|>x\right\}\\
	&\leq P_n^*\left\{\min_{i:\ |V_n(a)-i/n|>x}\{\Lambda_{n}^*(i/n)-a\,i/n\}\leq\L^*_{n}(V_n(a))-a V_n(a)\right\}\\
	&\leq P_n^*\left\{\min_{i:\ |V_n(a)-i/n|>x}\left\{D_{n}^*(i/n)- D_{n}^*\left(V_n(a)\right)+\tfrac12 c\left(in^{-1}-V_n(a)\right)^2\right\}\leq 0\right\},
	\end{align*}
	where $D_n^*$ is defined by $D_n^*=\L_n^*-\tilde\L_n$ and where we use (\ref{lower_bound_L_n}) in the last step.
	Define
	\begin{align*}
	B_n^*(t)=D_n^*(t)-\int_0^t\left\{\tilde F_{nh}\left(\G_n^{-1}(u)\right)-\tilde F_{nh}\left(G^{-1}(u)\right)\right\}\,du.
	\end{align*}
	Then:
	\begin{align*}
	B_n^*(i/n)=n^{-1}\sum_{j\le i}\e_i^*.
	\end{align*}
	Moreover, the event $\{\bigl|V_{n}^*(a)-V_n(a)\bigr|>x\}$ is contained in the union of the events
	\begin{align*}
	E_{n1}=\left\{\sup_{u:\ |V_n(a)-u|>x}\left\{\int_{V_n(a)}^u\left\{\tilde F_{nh}\left(G^{-1}(t)\right)-\tilde F_{nh}\left(\G_n^{-1}(t)\right)\right\}\,dt-\frac c4(u-V_n(a))^2\right\}\geq 0\right\}
	\end{align*}
	and
	\begin{align*}
	E_{n2}=\left\{\sup_{i:\ |V_n(a)-i/n|>x}\{B_n^*(V_n(a))-B_n^*(i/n)-\frac c4(in^{-1}-V_n(a))^2\}\geq 0\right\}.
	\end{align*}
	We have, by the mean value theorem and the bounded differentiability of $\tilde F_{nh}$,
	\begin{align*}
	&\left|\int_{V_n(a)}^u\left\{\tilde F_{nh}\left(G^{-1}(t)\right)-\tilde F_{nh}\left(\G_n^{-1}(t)\right)\right\}\,dt\right|
	\le c'\bigl|u-V_n(a)\bigr|\sup_{t\in[0,1]}\bigl|\G_n^{-1}(t)-G^{-1}(t)\bigr|.
	\end{align*}
	for a constant $c'>0$. Hence we get from Lemma \ref{lem: DKWinv} in the original space:
	\begin{align}
	\label{bound-E_n1}
	&P_n(E_{n1})\le P_n\left\{\sup_{t\in[0,1]}\bigl|\G_n^{-1}(t)-G^{-1}(t)\bigr|\ge \frac{c x}{4c'}\right\}\le 4\exp\left\{-K nc^2x^2\right\}
	\le 4\exp\left\{-Kc^2 n^{1/3}\right\},
	\end{align}
	for some $K>0$ and $x\in(n^{-1/3},M]$.	
	This means that we may assume that, almost surely, the complement of $E_{n1}$ is satisfied for all large $n$ and all $x\in(n^{-1/3},1]$.
	So we now turn to $P_n^*(E_{n2})$.
	
	We have:
	\begin{align*}
	P_n^*(E_{n2})&\leq \sum_{k\geq 1}P_n^*\left(\sup_{i:\ |V_n(a)-i/n|\in(kx,(k+1)x]}\left\{B_n^*\left(V_n(a)\right)-B_n^*(i/n)-\frac c4\left(i/n-V_n(a)\right)^2\right\}\geq 0\right)\\
	&\leq \sum_{k\geq 1}P_n^*\left(\sup_{i:\ |V_n(a)-i/n|\leq(k+1)x}\{B_n^*\left(V_n(a)\right)-B_n^*(i/n)\}\geq \frac c4k^2x^2\right).
	\end{align*}
	Using the piecewise linearity of $B_n^*$, we get
	$$
	B_n^*\left(V_n(a)\right)=B_n^*\left(\frac{\lfloor n V_n(a) \rfloor}n\right)+\left(V_n(a)-\frac{\lfloor n V_n(a) \rfloor}n\right)\e^*_{\lfloor n V_n(a) \rfloor+1},
	$$
	where $\lfloor n V_n(a) \rfloor$ denotes the integer part (``floor'') of $n V_n(a)$. Hence,
	\begin{eqnarray}
	\label{eq: majS1S2} \notag
	P_n^*(E_{n2})
	&\leq& \sum_{k\geq 1}P_n^*\left(\left(V_n(a)-\frac{\lfloor nV_n(a) \rfloor}n\right)\e^*_{\lfloor nV_n(a) \rfloor+1}\geq \frac c8k^2x^2\right)\\
	&&\qquad+\sum_{k\geq 1}P_n^*\left(\sup_{i:\ |V_n(a)-i/n|\leq (k+1)x}\left\{\sum_{j\leq nV_n(a)}\e^*_{j}-\sum_{j\leq i}\e^*_{j}\right\}\geq \frac {nc}8k^2x^2\right).
	\end{eqnarray}
	
	The Markov inequality implies that for all $\theta>0$, $k\geq 1$, $a\in[0,1]$ and $x\in(n^{-1/3},1]$,
	\begin{equation}
	\notag
	\begin{split}
	&   P_n^*\left\{\left(V_n(a)-\frac{\lfloor nV_n(a) \rfloor}n\right)\e^*_{\lfloor nV_n(a) \rfloor+1}\geq \frac c8k^2x^2\right\}\\
	&\leq \exp\left\{- \frac { \theta c}8k^2x^2\right\} E_n^*\exp\left\{\theta\left(V_n(a)-\frac{\lfloor nV_n(a) \rfloor}n\right)\e^*_{\lfloor nV_n(a) \rfloor+1}\right\},
	\end{split}
	\end{equation}
	where $E_n^*$ denotes the expectation under $P_n^*$.
	Since $\e^*_i\in[-1,1]$ for all $i$, we have $\exp( \alpha \e^*_i)\leq K\exp(\alpha^2)$ for all $\alpha\in\R$ and $K\ge \exp(1)$ and therefore, with $\theta=ck^2x^2n^2/16$, we obtain
	\begin{eqnarray*}
		P_n^*\left(\left(V_n(a)-\frac{\lfloor nV_n(a) \rfloor}n\right)\e^*_{\lfloor nV_n(a) \rfloor+1}\geq \frac c8k^2x^2\right) &\leq &K\exp\left(- \frac{ \theta c}8k^2x^2+\frac{\theta^2}{n^2}\right) \\
		& \leq& K\exp\left(- \frac{ c^2k^4x^4n^2}{256}\right) .
	\end{eqnarray*}
	Using that $k^4\geq k$ for all $k\geq 1$ and $nx\geq 1$ for all $x\in(n^{-1/3},1)$, we conclude that for all $a\in[0,1]$ and $x\in(n^{-1/3},1)$
	\begin{align}
	\notag \label{eq: S1}
	&\sum_{k\geq 1}P_n^*\left(\left(V_n(a)-\frac{\lfloor nV_n(a) \rfloor}n\right)\e^*_{\lfloor nV_n(a) \rfloor+1}\geq \frac c8k^2x^2\right)\nonumber\\
	&\leq  K\sum_{k\geq 1}\exp\left(- \frac{ c^2kx^3n}{256}\right)
	\leq  K\exp\left(- \frac{ c^2x^3n}{256}\right)  \sum_{k\geq 0}\exp\left(- \frac{ c^2kx^3n}{256}\right)\nonumber\\
	&\leq K'\exp(-K_{2}nx^3),
	\end{align}
	with any finite $K'$ that satisfies
	$K'\geq K\sum_{k\geq 0}\exp\left(-c^2k/256\right)$
	and $K_{2}\leq c^2/256$. This takes care of the first term on the right of (\ref{eq: majS1S2}).
	
	We now consider the second term on the right of (\ref{eq: majS1S2}). Just as in the proof of Theorem 11.3 in \cite{piet_geurt:14}, we use Doob's submartingale inequality, this time conditionally on $(T_1,\dd_1),\dots,(T_n,\dd_n)$. This gives:
	\begin{equation}\notag
	\begin{split}
	&P_n^*\left(\sup_{i:\ |V_n(a)-i/n|\leq (k+1)x}\left\{\sum_{j\leq nV_n(a)}\e^*_{j}-\sum_{j\leq i}\e^*_{j}\right\}\geq \frac {nc}8k^2x^2\right)\\
	&\leq 2 \exp\left(-\frac {\theta nc}8k^2x^2\right)\sup_{i:\ |V_n(a)-i/n|\leq (k+1)x} E_n^*\left[\exp\left(\theta \left(\sum_{j\leq nV_n(a)}\e^*_{j}-\sum_{j\leq i}\e^*_{j}\right)\right)\right].
	\end{split}
	\end{equation}
	Suppose $i/n< V_n(a)$. Then we get:
	\begin{align*}
	&\log E_n^*\left[\exp\left(\theta \left(\sum_{j\leq nV_n(a)}\e^*_{j}-\sum_{j\leq i}\e^*_{j}\right)\right)\right]
	=\log E_n^*\left[\exp\left(\theta \left(\sum_{i<j\leq nV_n(a)}\e^*_{j}\right)\right)\right]\\
	&=\sum_{i<j\leq nV_n(a)}
	\log\left\{\exp\left\{\th\{1-\tilde F_{nh}(T_j)\}\right\}\tilde F_{nh}(T_j)
	+\exp\left\{-\th \tilde F_{nh}(T_j)\right\}\{1-\tilde F_{nh}(T_j)\}\right\}\\
	&=n\int_{i/n}^{V_n(a)}\log\Biggl\{\exp\left\{\th\{1-\tilde F_{nh}(\G_n^{-1}(t))\}\right\}\tilde F_{nh}(\G_n^{-1}(t))\\
	&\qquad\qquad\qquad\qquad\qquad\qquad\qquad+\exp\left\{-\th\tilde F_{nh}(\G_n^{-1}(t))\}\right\}\left\{1-\tilde F_{nh}(\G_n^{-1}(t))\right\}\Biggr\}\,dt
	\end{align*}
	Since $\log(1+x)\le x$, this is bounded above by:
	\begin{align*}
	&n\int_{i/n}^{V_n(a)}\Biggl\{\exp\left\{\th\{1-\tilde F_{nh}(\G_n^{-1}(t))\}\right\}\tilde F_{nh}(\G_n^{-1}(t))\\
	&\qquad\qquad\qquad\qquad\qquad\qquad\qquad+\exp\left\{-\th\tilde F_{nh}(\G_n^{-1}(t))\}\right\}\left\{1-\tilde F_{nh}(\G_n^{-1}(t))\right\}-1\Biggr\}\,dt\\
	&\le
	n\int_{V_n(a)-(k+1)x}^{V_n(a)}\Biggl\{\exp\left\{\th\{1-\tilde F_{nh}(\G_n^{-1}(t))\}\right\}\tilde F_{nh}(\G_n^{-1}(t))\\
	&\qquad\qquad\qquad\qquad\qquad\qquad\qquad+\exp\left\{-\th\tilde F_{nh}(\G_n^{-1}(t))\}\right\}\left\{1-\tilde F_{nh}(\G_n^{-1}(t))\right\}-1\Biggr\}\,dt,
	\\
	&=
	n\int_{V_n(a)-(k+1)x}^{V_n(a)}\Biggl\{\sum_{i=2}^{\infty}\frac{\th^i}{i!}\{1-\tilde F_{nh}(\G_n^{-1}(t))\}^i\tilde F_{nh}(\G_n^{-1}(t))\\
	&\qquad\qquad\qquad\qquad\qquad\qquad\qquad+\sum_{i=2}^{\infty}\frac{\th^i}{i!}(-1)^i\tilde F_{nh}(\G_n^{-1}(t))^i\left\{1-\tilde F_{nh}(\G_n^{-1}(t))\right\}\Biggr\}\,dt,
	\\
	&=
	n \sum_{i=2}^{\infty}\frac{\th^i}{i!} \int_{V_n(a)-(k+1)x}^{V_n(a)}\Biggl\{\{1-\tilde F_{nh}(\G_n^{-1}(t))\}^i\tilde F_{nh}(\G_n^{-1}(t))\\
	&\qquad\qquad\qquad\qquad\qquad\qquad\qquad+(-1)^i\tilde F_{nh}(\G_n^{-1}(t))^i\left\{1-\tilde F_{nh}(\G_n^{-1}(t))\right\}\Biggr\}\,dt,
	\end{align*}
	if $i/n<V_n(a)$ and $|V_n(a)-i/n|\leq (k+1)x$. Since $V_n(a) \in [0,1]$, the integrand,
	\begin{align*}
	\{1-\tilde F_{nh}(\G_n^{-1}(t))\}^i\tilde F_{nh}(\G_n^{-1}(t)) + (-1)^i\tilde F_{nh}(\G_n^{-1}(t))^i\left\{1-\tilde F_{nh}(\G_n^{-1}(t))\right\},
	\end{align*}
	is bounded by 1/2, we get,
	\begin{align*}
	&P_n^*\left(\sup_{i:\ |V_n(a)-i/n|\leq (k+1)x}\left\{\sum_{j\leq nV_n(a)}\e^*_{j}-\sum_{j\leq i}\e^*_{j}\right\}\geq \frac {nc}8k^2x^2\right)\\
	&\leq 2 \exp\left(-\frac {\theta nc}8k^2x^2\right)\sup_{i:\ |V_n(a)-i/n|\leq (k+1)x} E_n^*\left[\exp\left(\theta \left(\sum_{j\leq nV_n(a)}\e^*_{j}-\sum_{j\leq i}\e^*_{j}\right)\right)\right]\\
	&\leq 2 \exp\left(-\frac {\theta nck^2x^2}8  + \frac {n(k+1)x}2 \sum_{i=2}^{\infty}\frac{\th^i}{i!}  \right)
	\end{align*}
	for all $x\in (n^{-1/3},1),k \geq 1, a\in[0,1]$ and $\th >0$. Therefore, with $\th = \log(1+\frac{ck^2x}{4(k+1)})$, we arrive at,
	\begin{align*}
	&P_n^*\left(\sup_{i:\ |V_n(a)-i/n|\leq (k+1)x}\left\{\sum_{j\leq nV_n(a)}\e^*_{j}-\sum_{j\leq i}\e^*_{j}\right\}\geq \frac {nc}8k^2x^2\right)\\
	&\leq 2 \exp\left(\frac {n(k+1)x}2\left\{\frac{ck^2x}{4(k+1)} - \left(1+ \frac{ck^2x}{4(k+1)} \right)\log\left(1+\frac{ck^2x}{4(k+1)}\right)  \right\} \right)
	\end{align*}
	Following \cite{Pol:84}, in his discussion of Bennett's inequality on p. 192, we introduce the function $B$, defined by $B(0)=1/2$ and 
	\begin{align*}
	B(u) = u^{-2}\{ (1+u)\log(1+u) - u \}.
	\end{align*}	
	Making the change of variables $u_k = ck^2x/(4(k+1))$, we can write,
	\begin{align*}
	&P_n^*\left(\sup_{i:\ |V_n(a)-i/n|\leq (k+1)x}\left\{\sum_{j\leq nV_n(a)}\e^*_{j}-\sum_{j\leq i}\e^*_{j}\right\}\geq \frac {nc}8k^2x^2\right)\leq 2 \exp\left(-\frac{nc^2k^4x^3}{32(k+1)}B(u_k) \right).
	\end{align*}
	Since $u_k$ varies over a finite interval $[0,M']$ and therefore $B(u_k)$ stays away from zero on $[0,M']$, we find that,
	\begin{align*}
	&\sum_{k\geq 1}P_n^*\left(\sup_{i:\ |V_n(a)-i/n|\leq (k+1)x}\left\{\sum_{j\leq nV_n(a)}\e^*_{j}-\sum_{j\leq i}\e^*_{j}\right\}\geq \frac {nc}8k^2x^2\right)
	\\
	&\leq K_1\sum_{k\geq 1} \exp\left(-\frac{nc^2k^4x^3}{32(k+1)} \right)
	\leq K_1 \exp\left(-\frac{nc^2x^3}{64}\right)\sum_{k\geq 0} \exp\left(-\frac{c^2k^4}{32(k+1)} \right)
	\leq K_2 \exp\left(-K_3nx^3\right).
	\end{align*}
	for appropriate $K_1, K_2$ and $K_3$. Combining this with (\ref{bound-E_n1}) and (\ref{eq: S1}), it follows that 
	\begin{align*}
	P_n^*\left\{\bigl|V_n^*(a)-V_n(a)\bigr|>x\right\}\le c_1\exp\{-nc_2x^3) 
	\end{align*}
	for all large $n$, almost surely along $(T_1,\Delta_1),\ldots $
	for  constants $c_1,c_2>0$ and $x\in (n^{-1/3},1]$.
	
	We now prove that (\ref{equiv_statement}) also follows by considering the transition  of $V_n$ and $V_n^*$ to $U_n$ and $U_n^*$. By (\ref{U_n_V_n}) and (\ref{U_n*_V_n^*}) we get:
	\begin{align*}
	U_n^*(a)-U_n(a)=\G_n^{-1}\circ V_n^*(a)-G^{-1}\circ V_n(a),
	\end{align*}
	and hence:
	\begin{align*}
	\bigl|U_n^*(a)-U_n(a)\bigr|
	\le \sup_{t\in[0,1]}\bigl|\G_n^{-1}(t)-G^{-1}(t)\bigr|+k_1\bigl|V_n^*(a)-V_n(a)\bigr|,
	\end{align*}
	where
	\begin{align*}
	k_1=1/\inf_{x\in[0,M]}g(x).
	\end{align*}
	From Lemma \ref{lem: DKWinv} we get in the original space:
	\begin{align*}
	P_n\left\{\sup_{t\in[0,1]}\bigl|\G_n^{-1}(t)-G^{-1}(t)\bigr|\ge x/2\right\} 
	\le 4\exp\left\{-K n^{1/3}\right\},
	\end{align*}
	for some $K>0$ and $x\in(n^{-1/3},M]$.	
	So we may assume that, almost surely, $\bigl|\G_n^{-1}(t)-G^{-1}(t)\bigr|< x/2$, for all large $n$ and all $x\in(n^{-1/3},1]$.
	By the foregoing proof, we also have:
	\begin{align*}
	P_n^*\left\{k_1\bigl|V_n^*(a)-V_n(a)\bigr|\ge x/2\right\}\le c_1\exp\left\{-c_2nx^3/\left(8k_1^3\right)\right\}.
	\end{align*}
	This proves the result.
\end{proof}

\bibliographystyle{apalike} 

\begin{thebibliography}{}
	
	\bibitem[Abrevaya and Huang, 2005]{abrevaya_huang2005}
	Abrevaya, J. and Huang, J. (2005).
	\newblock On the bootstrap of the maximum score estimator.
	\newblock {\em Econometrica}, 73(4):1175--1204.
	
	\bibitem[Banerjee et~al., 2016]{mouli:16}
	Banerjee, M., Durot, C., and Sen, B. (2016).
	\newblock Divide and conquer in non-standard problems and the super-efficiency
	phenomenon.
	\newblock Submitted.
	
	\bibitem[Banerjee and Wellner, 2001]{mouli_jon:01}
	Banerjee, M. and Wellner, J. (2001).
	\newblock Likelihood ratio tests for monotone functions.
	\newblock {\em Ann. Statist.}, 29:1699--1731.
	
	\bibitem[Banerjee and Wellner, 2005]{banerjee_wellner:2005}
	Banerjee, M. and Wellner, J.~A. (2005).
	\newblock Confidence intervals for current status data.
	\newblock {\em Scand. J. Statist.}, 32(3):405--424.
	
	\bibitem[Durot et~al., 2013]{cecile_rik:12}
	Durot, C., Groeneboom, P., and Lopuha\"a, H. (2013).
	\newblock Testing equality of functions under monotonicity constraints.
	\newblock {\em J. Nonparametr. Stat.}, 25:939--970.
	
	\bibitem[Durot and Reboul, 2010]{durot2010}
	Durot, C. and Reboul, L. (2010).
	\newblock Goodness-of-fit test for monotone functions.
	\newblock {\em Scand. J. Stat.}, 37(3):422--441.
	
	\bibitem[Groeneboom, 2012]{piet:11e}
	Groeneboom, P. (2012).
	\newblock Likelihood ratio type two-sample tests for current status data.
	\newblock {\em Scand. J. Stat.}, 39:645--662.
	
	\bibitem[Groeneboom, 2015]{github:15}
	Groeneboom, P. (2015).
	\newblock Rcpp scripts.
	\newblock \url{https://github.com/pietg/book/tree/master/Rcpp_scripts}.
	
	\bibitem[Groeneboom and Hendrickx, 2017a]{GroeneboomHendrickx16}
	Groeneboom, P. and Hendrickx, K. (2017a).
	\newblock Current status linear regression.
	\newblock Submitted.
	
	\bibitem[Groeneboom and Hendrickx, 2017b]{kim_piet:17}
	Groeneboom, P. and Hendrickx, K. (2017b).
	\newblock The nonparametric bootstrap for the current status model.
	\newblock Submitted.
	
	\bibitem[Groeneboom and Jongbloed, 2014]{piet_geurt:14}
	Groeneboom, P. and Jongbloed, G. (2014).
	\newblock {\em Nonparametric Estimation under Shape Constraints}.
	\newblock Cambridge Univ. Press, Cambridge.
	
	\bibitem[Groeneboom and Jongbloed, 2015]{piet_geurt:15}
	Groeneboom, P. and Jongbloed, G. (2015).
	\newblock Nonparametric confidence intervals for monotone functions.
	\newblock {\em Ann. Statist.}, 43(5):2019--2054.
	
	\bibitem[Groeneboom et~al., 2010]{piet_geurt_birgit:10}
	Groeneboom, P., Jongbloed, G., and Witte, B. (2010).
	\newblock Maximum smoothed likelihood estimation and smoothed maximum
	likelihood estimation in the current status model.
	\newblock {\em Ann. Statist.}, 38:352--387.
	
	\bibitem[Groeneboom et~al., 2015]{piet_nico_steve:15}
	Groeneboom, P., Lalley, S., and Temme, N. (2015).
	\newblock Chernoff's distribution and differential equations of parabolic and
	{A}iry type.
	\newblock {\em J. Math. Anal. Appl.}, 423(2):1804--1824.
	
	\bibitem[Groeneboom and Wellner, 1992]{GrWe:92}
	Groeneboom, P. and Wellner, J. (1992).
	\newblock {\em Information bounds and nonparametric maximum likelihood
		estimation}, volume~19 of {\em DMV Seminar}.
	\newblock Birkh\"auser Verlag, Basel.
	
	\bibitem[Groeneboom and Wellner, 2001]{piet_jon:01}
	Groeneboom, P. and Wellner, J. (2001).
	\newblock Computing {C}hernoff's distribution.
	\newblock {\em J. Comput. Graph. Statist.}, 10:388--400.
	
	\bibitem[Hall, 1990]{hall:90}
	Hall, P. (1990).
	\newblock Using the bootstrap to estimate mean squared error and select
	smoothing parameter in nonparametric problems.
	\newblock {\em J. Multivariate Anal.}, 32(2):177--203.
	
	\bibitem[Hall, 1992]{hall:92}
	Hall, P. (1992).
	\newblock Effect of bias estimation on coverage accuracy of bootstrap
	confidence intervals for a probability density.
	\newblock {\em Ann. Statist.}, 20:675--694.
	
	\bibitem[Kaplan and Meier, 1958]{Kaplan1958}
	Kaplan, E.~L. and Meier, P. (1958).
	\newblock Nonparametric estimation from incomplete observations.
	\newblock {\em J. Amer. Statist. Assoc.}, 53:457--481.
	
	\bibitem[Keiding, 1991]{keiding:91}
	Keiding, N. (1991).
	\newblock Age-specific incidence and prevalence: a statistical perspective.
	\newblock {\em J. Roy. Statist. Soc. Ser. A}, 154(3):371--412.
	\newblock With discussion.
	
	\bibitem[Keiding et~al., 1996]{Keiding:96}
	Keiding, N., Begtrup, K., Scheike, T., and Hasibeder, G. (1996).
	\newblock Estimation from current status data in continuous time.
	\newblock {\em Lifetime Data Anal.}, 2:119--129.
	
	\bibitem[Kosorok, 2008]{kosorok:08}
	Kosorok, M. (2008).
	\newblock Bootstrapping the {G}renander estimator.
	\newblock In {\em Beyond parametrics in interdisciplinary research:
		{F}estschrift in honor of {P}rofessor {P}ranab {K}. {S}en}, volume~1 of {\em
		Inst. Math. Stat. Collect.}, pages 282--292. Inst. Math. Statist., Beachwood,
	OH.
	
	\bibitem[Pollard, 1984]{Pol:84}
	Pollard, D. (1984).
	\newblock {\em Convergence of stochastic processes}.
	\newblock Springer Series in Statistics. Springer-Verlag, New York.
	
	\bibitem[Sen et~al., 2010]{sen_mouli_woodroofe:10}
	Sen, B., Banerjee, M., and Woodroofe, M. (2010).
	\newblock Inconsistency of bootstrap: the {G}renander estimator.
	\newblock {\em Ann. Statist.}, 38:1953--1977.
	
	\bibitem[Sen and Xu, 2015]{SenXu2015}
	Sen, B. and Xu, G. (2015).
	\newblock Model based bootstrap methods for interval censored data.
	\newblock {\em Comput. Statist. Data Anal.}, 81:121--129.
	
	\bibitem[van~de Geer, 2000]{geer:00}
	van~de Geer, S. (2000).
	\newblock {\em Applications of empirical process theory}, volume~6 of {\em
		Cambridge Series in Statistical and Probabilistic Mathematics}.
	\newblock Cambridge Univ. Press, Cambridge.
	
\end{thebibliography}

\end{document}